\def\beq{\begin{equation}}
\def\eeq{\end{equation}}
\def\ba{\begin{array}}
\def\ea{\end{array}}
\newtheorem{thm}{Theorem}[section]
\newtheorem{lm}[thm]{Lemma}
\newtheorem{crl}[thm]{Corollary}
\theoremstyle{definition}
\newtheorem{rem}[thm]{Remark}
\newtheorem{df}[thm]{Definition}
\newtheorem{ex}[thm]{Example}
\theoremstyle{remark}
\begin{document}
\pagestyle{plain}
\title{$C^{0}$-regularity for solutions of elliptic equations with distributional coefficients}

\author{Jingqi Liang\\
 \small{School of Mathematical Sciences, Shanghai Jiao Tong University}\\
 \small{Shanghai, China}\\\\
\small{ Lihe Wang}\\
 \small{Department of Mathematics, University of Iowa, Iowa City, IA, USA;\\ School of Mathematical Sciences, Shanghai Jiao Tong University}\\
 \small{Shanghai, China}\\\\
\small{Chunqin Zhou}\\
 \small{School of Mathematical Sciences, MOE-LSC, CMA-Shanghai, Shanghai Jiao Tong University}\\
 \small{Shanghai, China}}

\footnote{The third author was partially supported by NSFC Grant 12031012.}

\begin{abstract}
In this paper, the continuity of solutions for elliptic equations in divergence form with distributional coefficients is considered. Inspired by the discussion on necessary and sufficient conditions for the form boundedness of elliptic operators by Maz'ya and Verbitsky (Acta Math. \textbf{188}, 263-302, 2002 and Comm. Pure Appl. Math. \textbf{59}, 1286-1329, 2006), we propose two kinds of sufficient conditions, which are some Dini decay conditions and some integrable conditions named Kato class or $K^{1}$ class,  to  show that the weak solution of the Schr\"{o}dinger type elliptic equation with distributional coefficients is continuous, and give an almost optimal priori estimate. These estimates can clearly show that how the coefficients and nonhomogeneous terms influence the regularity of solutions. The $\ln$-Lipschitz regularity and H\"{o}lder regularity are also obtained as corollaries which cover the classical De Giorgi's H\"{o}lder estimates.
\end{abstract}

\maketitle

{\bf Keywords:} Schr\"{o}dinger type elliptic equation, Distributional coefficients, Dini conditions, Kato class, Continuity of solution

\section{Introduction}
The goal of the present paper is to obtain the pointwise continuity for $W^{1,2}$ weak solutions of the following divergence form equation with distributional coefficients,
\begin{equation}\label{1}
-\Delta u+Vu=-\text{div}\vec{f}+g\qquad \text{in}~\Omega,
\end{equation}
where $\Omega$ is an open bounded subset of $\mathbb{R}^{n}$ with $n\geq3$.

In the past two decades, the differential operator with distributional coefficients attract many scholars' attention, such as the following second order differential operator with distributional coefficients acting from $W^{1,2}(\mathbb{R}^{n})$ to $W^{-1,2}(\mathbb{R}^{n})$:
\begin{equation}\label{gsl}
\mathcal{L}=\text{div}(A\nabla )+\vec{b}\cdot\nabla +q,
\end{equation}
where $A=(a_{ij})_{n\times n}$, $\vec{b}=(b_{1},b_{2},\cdots,b_{n})$, $a_{ij},~b_{i},~q$ are real or complex valued distributions on $W^{1,2}(\mathbb{R}^{n})$. The operator $\mathcal{L}:~W^{1,2}(\mathbb{R}^{n})\rightarrow W^{-1,2}(\mathbb{R}^{n})$ is said to be bounded if and only if the sesquilinear inequality
\begin{equation}\label{gfbd}
|\langle \mathcal{L}u,v\rangle|\leq C\|u\|_{W^{1,2}(\mathbb{R}^{n})}\|v\|_{W^{1,2}(\mathbb{R}^{n})}
\end{equation}
holds for all $u,v\in C_{0}^{\infty}(\mathbb{R}^{n})$, where $C$ does not depend on $u,v\in C_{0}^{\infty}(\mathbb{R}^{n})$.

Maz'ya and Verbitsky \cite{MV02,MV06} have already characterized $a_{ij},~b_{i},~c$ to get the necessary and sufficient condition on the form boundedness of operator $\mathcal{L}$ where harmonic analysis and potential theory methods were employed. In 2002, they firstly considered the Schr\"{o}dinger operator, $\mathcal{L}=-\text{div}(\nabla)+V$. In this situation, the form boundedness of $\mathcal{L}$ is equivalent to the boundedness of $V$. They proved that the sesquilinear form defined by $\langle Vu,v\rangle=\langle V,uv\rangle$ is bounded on $W^{1,2}(\mathbb{R}^{n})\times W^{1,2}(\mathbb{R}^{n})$
if and only if there exists a vector field $\vec{\Gamma}=(\Gamma_{1},\Gamma_{2},\cdots,\Gamma_{n})\in L_{\text{loc}}^{2}(\mathbb{R}^{n})^{n}$ and $\Gamma_{0}\in L_{\text{loc}}^{2}(\mathbb{R}^{n})$ such that
$$V=\text{div}\vec{\Gamma}+\Gamma_{0}
$$
and $|\Gamma_{i}|^{2}(i=0,1,\cdots,n)$ are admissible measure for $W^{1,2}(\mathbb{R}^{n})$, i.e.
$$\int_{\mathbb{R}^{n}}|u(x)|^{2}|\Gamma_{i}(x)|^{2}dx\leq C\|u\|_{W^{1,2}(\mathbb{R}^{n})}^{2},\quad i=0,1,\cdots,n,
$$
where $C$ does not depend on $u\in C_{0}^{\infty}(\mathbb{R}^{n})$. In 2006 , they generalized their results to the general second order differential operator $\mathcal{L}$ in \cite{MV06} and in this case there were no ellipticity assumptions on $A$.

Actually, if the the sesquilinear mapping associated with $\mathcal{L}$ is bounded, then the $W^{1,2}$ weak solutions can be defined for some equations that $\mathcal{L}u$ satisfies. In other words, the discussion on the necessary and sufficient conditions on the form boundedness of differential operators can guarantee the existence of weak solutions for some equations with such distributional coefficients.

Actually, in  2012, Jaye, Maz'ya and Verbitsky studied the homogeneous equation of Schr$\ddot{\text{o}}$dinger type
\begin{equation}\label{2012}
-\text{div}(A\nabla u)-\sigma u=0\quad \text{in}~\Omega,
\end{equation}
where $\Omega$ is a domain in $\mathbb{R}^{n}$, $A\in L^{\infty}(\Omega)^{n\times n}$ satisfies ellipticity assumption, $\sigma\in D'(\Omega)$ is a real-valued distributional potential. They showed the existence and the optimal regularity of positive solutions for above equation: if there are upper and lower bounds of $\langle\sigma,h^{2}\rangle$ with $\lambda<1$ and $\Lambda>0$, that is,
$$\langle\sigma,h^{2}\rangle\leq \lambda\int_{\Omega}(A\nabla h)\cdot\nabla hdx,\quad\langle\sigma,h^{2}\rangle\geq -\Lambda\int_{\Omega}(A\nabla h)\cdot\nabla hdx,\quad \text{for all}~h\in C_{0}^{\infty}(\Omega),
$$
then there exists a positive $W^{1,2}_{\text{loc}}$ solution of $(\ref{2012})$ and the $W^{1,2}_{\text{loc}}$ regularity is optimal. Similarly they extend their results to a quasilinear version for operators of the $p$-Laplace type in \cite{JMV2013} and also got the existence and optimal local Sobolev regularity of positive solutions under a mild restriction on $\sigma$. Besides, analogous problems have been studied in fractional Sobolev spaces, infinitesimal form boundedness, which can be found in \cite{MV021,MV04,MV05}.

To our knowledge, we observe that the scholars focus more on the existence of solutions, merely on the regularity of solutions for the equations with distributional coefficients even for the homogeneous equation. It is natural to ask a question: under what conditions on $V,~\vec{f},~g$ the solution of $(\ref{1})$ will be continuous or H\"{o}lder continuous, even higher regularity?
\
\

The present paper is devoted to obtain the continuity of solutions. To this purpose, we firstly recall some known results when the coefficients $V$ and the nonhomogeneous terms $\vec{f},~g$ are measurable functions and locally integrable.

For classical Schr\"{o}dinger operator $L$ of the form
\begin{equation}\label{generall}
Lu=-\text{div}(\nabla u)+Vu,
\end{equation}
where $V$ is measurable on a bounded domain $\Omega\subset\mathbb{R}^{n}(n\geq3)$, it is well-known that if $V\in L^{\frac{n}{2}}(\Omega)$, then for any $u\in W^{1,2}(\Omega)$, $v\in W_{0}^{1,2}(\Omega)$, by denoting
$$a(u,v)=\int_{\Omega}\nabla u\cdot\nabla v+Vuvdx,
$$
it is easy to verify $a(\cdot,\cdot)$ is a bounded bilinear mapping on $W^{1,2}(\Omega)\times W_{0}^{1,2}(\Omega)$. It means that $V\in L^{\frac{n}{2}}(\Omega)$ is a sufficient condition to guarantee the boundedness of $a(\cdot,\cdot)$. Then the weak solution of the inhomogeneous equation
\begin{equation}\label{genel}
Lu=-\text{div}\vec{f}+g\qquad \text{in}~\Omega,
\end{equation}
can be defined reasonably provided that $\vec{f}\in L^{2}(\Omega)^{n}$, $g\in L^{\frac{2n}{n+2}}(\Omega)$. Furthermore, if $V\in L^{\frac{q}{2}}(\Omega)$ with $q>n$ and $f_{i}\in L^{q}(\Omega)$, $g\in L^{\frac{nq}{n+q}}(\Omega)$, then local maximum principle, Harnack inequality, interior H\"{o}lder regularity and existence theory are well-known, see \cite{DT}.
The similar results also hold for the second order uniformly elliptic equation:
\begin{equation}\label{generall}
-D_{j}(a_{ij}D_{i}u)+b_{i}D_{i}u+cu=-\text{div}\vec{f}+g.
\end{equation}

Besides, in 1980s, many scholars try to generalize the classical H\"{o}lder regularity or the continuity of solution under weaker integrable assumptions on $V$: such as that $V$ belongs to Kato class (see Definition $\ref{ka}$), some Morrey spaces, some Lorentz spaces and so on. Kato class was firstly introduced by Aizenman and Simon in \cite{AS1982}, which is based on a condition considered by Kato in \cite{Kato1973}. They use the probabilistic technique to show Harnack inequality and the continuity of solutions for $-\Delta u+Vu=0$ while $V$ belongs to the Kato class, denoted by $V\in K(\Omega)$ in the following. Later, Simader \cite{Si1990}, Hinz and Kalf \cite{HK1990} proved same result by a different method: they use the Green function of $\Delta$ to represent the solutions locally. Instead of Laplacian, Chiarenza, Fabes and Garofalo \cite{CFG1986} considered a general uniformly elliptic operator in divergence form $Lu=D_{j}(a_{ij}(x)D_{i}u)+Vu$, $V\in K(\Omega)$, they proved the continuity and uniform Harnack inequality for solutions based on real variable approach which deeply  depends on $L^{p}$ estimates of Green functions given by Fabes and Stroock in \cite{FS1984}.  Kurata \cite{K1994} developed the method of Chiarenza to prove the local boundedness, Harnack inequality and continuity for weak solutions of general second order elliptic equations with bounded measurable coefficients: $-\text{div}(A(x)\nabla u)+\vec{b}\cdot \nabla u+Vu=0$, the main assumptions are that $V$ and $|\vec{b}|^{2}$ belong to the local Kato class. For other general uniformly elliptic and degenerate elliptic operators with lower term coefficients satisfying Kato type conditions, the solution can also obey local boundedness principle, be continuous and satisfy Harnack inequality, we refer the readers to \cite{CGL1993,Gu1989,Mo1998,Mo2000,Mo2002}. Except Kato type conditions, Di Fazio \cite{D1988} study the same equation as Aizenman's but assume $V$ in Morrey space $L^{1,\mu}$ where $\mu>\displaystyle\frac{n-2}{n}$, they proved H\"{o}lder inequality and improved the continuity result in \cite{CFG1986}. For more properties on Kato class and the relationship between Kato class and Morrey spaces, Lorentz spaces, we refer the readers to \cite{CRR2017,CRR2019,DH1998,FGL1990,THG2020}.

In this paper, we will propose two kinds of sufficient conditions for $V,~\vec{f},~g$ to show the $C^0$-regularity of weak solutions of (\ref{1}). Particularly, to show $C^0$-regularity of weak solutions, we assume firstly some Dini decay conditions for $V,~\vec{f},~g$ in each ball $B_r(x_0)$, and then we assume that $V,~g$ belong to Kato class and $\vec{f}$ belongs to $K^{1}$ class (see Definition $\ref{ka}$). Our main results are following.


\begin{thm}\label{linfty}Let $\Omega$ be a bounded domain and $0\in\Omega$. Assume that $V\in M(W^{1,2}(\Omega)\rightarrow W^{-1,2}(\Omega))$, i.e. $\langle V\cdot,\cdot\rangle: W^{1,2}(\Omega)\times W_{0}^{1,2}(\Omega)\rightarrow \mathbb{R}$ is a bounded bilinear mapping, $\vec{f}\in L^{2}(\Omega)^{n}$ satisfying that $|\vec{f}|^2$ is an admissible measure for $W_{0}^{1,2}(\Omega)$, and $g\in W^{-1,2}(\Omega)$. Suppose that  $u\in W^{1,2}(\Omega)$ is a weak solution of ($\ref{1}$) in $\Omega$.
If there exists a positive constant $R\leq1$ with $B_{R}\subset\Omega$
such that for any $0<r\leq \displaystyle\frac{R}{2}$ and $\psi\in W^{1,2}(\Omega)$, $\varphi\in W_{0}^{1,2}(\Omega)$ with $\text{supp}\{\varphi\}\subset \overline{B}_{r}$,
\begin{eqnarray*}
& & |\langle V\psi,\varphi\rangle|\leq \omega_{1}(s)\|\psi\|_{L_{r,s}^{1,2}}\|\nabla\varphi\|_{L^{2}(B_{r})}, ~~~ \forall r<s\leq 2r,\\
& & \displaystyle\int_{B_{r}}|\vec{f}|^{2}|\varphi|^{2}dx\leq (\omega_{2}(r))^{2}\displaystyle\int_{B_{r}}|\nabla\varphi|^{2}dx,\\
& & |\langle g,\varphi\rangle|\leq \frac{\omega_{2}(r)}{r^{2}}|B_{r}|^{\frac{n+2}{2n}}\|\nabla\varphi\|_{L^{2}(B_{r})},
\end{eqnarray*}
where $\|\psi\|_{L_{r,s}^{1,2}}=\displaystyle\frac{\|\psi\|_{L^{2}(B_{s})}}{s-r}+\|\nabla \psi\|_{L^{2}(B_{s})}$,  $\omega_{i}(r)$ is Dini modulus of continuity satisfying $\displaystyle\int_{0}^{R} \frac{\omega_{i}(r)}{r} d r<\infty$ for $i=1,2$, then $u$ is continuous at 0 in the $L^{2}$ sense. Moreover, there is a constant $K$ such that for any $0<r\leq R$,
\begin{eqnarray*}
& & \left(\frac{1}{|B_{r}|}\displaystyle\int_{B_{r}}|u-K|^{2}dx\right)^{\frac{1}{2}}\leq
\Omega_{1}(r)+\Omega_{2}(r),\end{eqnarray*}
and
$$ \left|K\right|\leq CA,$$
where
\begin{eqnarray*}
\Omega_{1}(r)=\begin{cases}
CA
\left(\displaystyle\frac{r}{\bar{r}}+r
\displaystyle\int_{r}^{\bar{r}}\frac{\omega_{1}(s)}{s^{2}}ds
+\displaystyle\int_{0}^{r}\frac{\omega_{1}(s)}{s}ds\right), \quad &0<r\leq\bar{r},\\
CA,\quad &\bar{r}<r\leq R,\\
\end{cases}
\end{eqnarray*}
\begin{eqnarray*}
\Omega_{2}(r)=\begin{cases}
C
\left(r
\displaystyle\int_{r}^{\bar{r}}\frac{\omega_{2}(s)}{s^{2}}ds
+\displaystyle\int_{0}^{r}\frac{\omega_{2}(s)}{s}ds\right), \quad &0<r\leq\bar{r},\\
CA,\quad &\bar{r}<r\leq R,\\
\end{cases}
\end{eqnarray*}

\begin{eqnarray*}
A=\left(\displaystyle\frac{1}{|B_{\bar{r}}|}\int_{B_{R}}u^{2}dx
\right)^{\frac{1}{2}}+\frac{4|B_{1}|^{\frac{n+2}{2n}}}{\delta_{0}}\left(\omega_{2}(R)
+\frac{1}{1-\lambda}\displaystyle\int_{0}^{R}\frac{\omega_{2}(s)}{s}ds\right),
\end{eqnarray*}
and $\bar{r}$ is chosen to be the solution of
\begin{equation*}
C_{0}|B_{1}|^{\frac{n+2}{2n}}
\left(\omega_{1}(\bar{r})+\frac{1}{1-\lambda}\int_{0}^{\bar{r}}\frac{\omega_{1}(s)}{s}ds\right)=
\frac{\delta_{0}}{128}.
\end{equation*}
Here $C$ is a universal constant $C=C(n)$,  and $\lambda,\delta_{0},C_{0}$ are the constants in Lemma $\ref{key1}$.
\end{thm}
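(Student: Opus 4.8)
The plan is to prove Theorem \ref{linfty} by a Campanato-type iteration in which, at each dyadic scale, the equation is frozen to $-\Delta u=0$ and the terms $Vu$, $-\text{div}\,\vec f$, $g$ are treated as perturbations. Fix the dilation ratio $\lambda$ of Lemma \ref{key1}, set $K_{r}=\frac{1}{|B_{r}|}\int_{B_{r}}u\,dx$ and $\phi(r)=\big(\frac{1}{|B_{r}|}\int_{B_{r}}|u-K_{r}|^{2}dx\big)^{1/2}$ for $0<r\le R$. On a ball $B_{r}$ take the harmonic replacement $h$ of $u$ (so $-\Delta h=0$, $w:=u-h\in W_{0}^{1,2}(B_{r})$); interior estimates for harmonic functions give linear decay of the corresponding oscillation, $\phi_{h}(\lambda r)\le C\lambda\,\phi_{h}(r)$, while $\phi_{h}(r)\le\phi(r)+\big(\frac{1}{|B_{r}|}\int_{B_{r}}|w|^{2}dx\big)^{1/2}$. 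Testing $\int_{B_{r}}\nabla w\cdot\nabla\varphi=-\langle Vu,\varphi\rangle+\int_{B_{r}}\vec f\cdot\nabla\varphi+\langle g,\varphi\rangle$ against $\varphi=w$ and inserting the three structural hypotheses --- form boundedness of $V$ with small constant $\omega_{1}$ (which returns $\|u\|_{L_{r,2r}^{1,2}}$), the admissibility bound for $|\vec f|^{2}$ with constant $\omega_{2}$ (applied to a cutoff it gives $\|\vec f\|_{L^{2}(B_{r/2})}\le C\omega_{2}(r)\,r^{(n-2)/2}$, which controls $\int\vec f\cdot\nabla w$), and the weighted $W^{-1,2}$-bound for $g$ --- followed by Poincar\'e and a Caccioppoli-type bound (trading $\|\nabla u\|_{L^{2}(B_{2r})}$ for $r^{-1}\|u-K_{4r}\|_{L^{2}(B_{4r})}$ plus $\omega_{i}$-errors), yields the one-step estimate of Lemma \ref{key1}:
$$\phi(\lambda r)\le\frac14\,\phi(r)+C\omega_{1}(2r)\big(|K_{r}|+\phi(r)\big)+C|B_{1}|^{\frac{n+2}{2n}}\omega_{2}(r),$$
valid whenever the accumulated $V$-weight $C_{0}|B_{1}|^{\frac{n+2}{2n}}\big(\omega_{1}(r)+\tfrac{1}{1-\lambda}\int_{0}^{r}\tfrac{\omega_{1}(s)}{s}ds\big)$ stays below $\delta_{0}$, i.e.\ for $0<r\le\bar r$.

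The argument then proceeds in two stages. \emph{Stage 1 (a priori bound).} For $\bar r\le r\le R$ one has trivially $|K_{r}|\le\big(\frac{1}{|B_{\bar r}|}\int_{B_{R}}u^{2}dx\big)^{1/2}\le A$ (note the normalization by $|B_{\bar r}|$, which absorbs the otherwise non-universal dependence on $R/\bar r$ into $A$). For $r<\bar r$, iterate the one-step estimate downward from scale $\bar r$ together with $|K_{\lambda\rho}-K_{\rho}|\le\lambda^{-n/2}\phi(\rho)$: since the defining equation for $\bar r$ keeps the accumulated $\omega_{1}$-weight between $\bar r$ and $r$ at most $\delta_{0}/128$, a discrete Gronwall argument absorbs the $V$-feedback and produces $|K_{r}|+\phi(r)\le CA$ for all $0<r\le R$, with the summed source errors $\tfrac{4|B_{1}|^{(n+2)/2n}}{\delta_{0}}\big(\omega_{2}(R)+\tfrac{1}{1-\lambda}\int_{0}^{R}\tfrac{\omega_{2}(s)}{s}ds\big)$ appearing in $A$; in particular $|K|\le CA$. \emph{Stage 2 (decay rate).} With $|K_{r}|\le CA$ in hand, the one-step estimate becomes a contraction-with-source $\phi(\lambda r)\le\tfrac14\phi(r)+E(r)$ with $E(r)\le CA\,\omega_{1}(r)+C\omega_{2}(r)$. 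Iterating from scale $\bar r$ down to $r$ yields the pure-decay term $\lesssim A\,r/\bar r$ plus, after a Fubini rearrangement of the geometric-weighted sum of the errors $E(\lambda^{j}\bar r)$, the tail $r\int_{r}^{\bar r}\tfrac{\omega_{1}(s)+\omega_{2}(s)}{s^{2}}ds$. Finally $|K_{\lambda^{j}r}-K_{\lambda^{j+1}r}|\lesssim\phi(\lambda^{j}r)$ is Dini-summable, so $K=\lim_{r\to0}K_{r}$ exists and
$$\Big(\tfrac{1}{|B_{r}|}\int_{B_{r}}|u-K|^{2}dx\Big)^{1/2}\le\phi(r)+\sum_{j\ge0}|K_{\lambda^{j}r}-K_{\lambda^{j+1}r}|$$
contributes the remaining $\int_{0}^{r}\tfrac{\omega_{1}(s)+\omega_{2}(s)}{s}ds$, using $\sum_{j\ge0}\omega_{i}(\lambda^{j}r)\le C\int_{0}^{Cr}\tfrac{\omega_{i}(s)}{s}ds$ for monotone $\omega_{i}$. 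Grouping the $\omega_{1}$-pieces (which carry the factor $A$) and the $\omega_{2}$-pieces reproduces $\Omega_{1}(r)+\Omega_{2}(r)$ for $0<r\le\bar r$; for $\bar r<r\le R$ the claimed estimate is just $CA$, immediate from Stage 1. Continuity of $u$ at $0$ in the $L^{2}$ sense then follows from $\Omega_{1}(r)+\Omega_{2}(r)\to0$ as $r\to0^{+}$, by dominated convergence against the Dini-integrable weights $\omega_{i}(s)/s$.

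The step I expect to be the main obstacle is controlling the feedback produced by the $V$-term. In the classical setting $Vu$ is dominated by a fixed ($L^{n/2}$- or Kato-) norm times $\|u\|$; here the only estimate available for $V$ at scale $r$ returns $\|u\|_{L_{r,2r}^{1,2}}$ --- essentially the energy of $u$ on a slightly larger ball --- so the one-step inequality couples $\phi(\lambda r)$ not merely to $\phi(r)$ but also to the a priori uncontrolled mean $|K_{r}|$, and a naive iteration does not close. This is exactly why the threshold scale $\bar r$ is introduced and why one must first establish $|K_{r}|\le CA$ (absorbing the $V$-feedback via the smallness of the accumulated $\omega_{1}$-weight below $\bar r$) before extracting the decay. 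Keeping the two accumulations separate throughout --- the self-referential $\omega_{1}$-errors, which come out multiplied by $A$, versus the genuine source $\omega_{2}$-errors --- is what makes the final estimate split cleanly into $\Omega_{1}$ and $\Omega_{2}$; and producing exactly the integrals $r\int_{r}^{\bar r}\omega_{i}(s)s^{-2}ds$ and $\int_{0}^{r}\omega_{i}(s)s^{-1}ds$ requires care in the Fubini/summation step that turns geometric sums of Dini errors into integrals.
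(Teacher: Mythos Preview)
Your overall strategy is sound and would prove the theorem, but it is a \emph{genuinely different} route from the paper's. Two substantive differences:

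\textbf{(a) Harmonic replacement versus compactness.} You build the one-step decay by solving the Dirichlet problem $-\Delta h=0$, $h=u$ on $\partial B_r$, and estimating $w=u-h$ by testing against itself. The paper deliberately avoids solving any boundary value problem: Lemma~\ref{key1} is obtained by a \emph{compactness} argument (Lemma~3.3), passing to a limit along a contradicting sequence and showing the limit is harmonic. You should not label your recursion ``the one-step estimate of Lemma~\ref{key1}''; what you wrote is a correct analogue, but it is not what Lemma~\ref{key1} says or how it is proved. Your method is more direct and gives explicit constants; the paper's buys robustness (no solvability needed; in particular it transfers verbatim to $-D_j(a_{ij}D_iu)$ with merely bounded measurable $a_{ij}$, where harmonic replacement is still fine but the interior estimates you need for $h$ require De~Giorgi rather than the mean-value property).

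\textbf{(b) Constants versus linear approximants.} You iterate with the means $K_r$; the paper iterates with \emph{linear} functions $l_k(x)=a_k+\vec b_k\cdot x$, tracking an auxiliary sequence $T_k$ and only discarding $\vec b_k$ at the very end (the term $\lambda^k|\vec b_k|$ is controlled separately). Both work here because the target regularity is $C^0$; the linear approximation is what the compactness lemma naturally produces (Taylor expansion of the harmonic limit), and it also makes the $\ln$-Lipschitz and $C^\alpha$ corollaries drop out with no extra work.

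Two places in your sketch that need care but are not fatal: (i) your one-step inequality as written has $\phi(r)$ on the right, but the $V$-bound returns $\|u\|_{L^{1,2}_{r,2r}}$ and the Caccioppoli step pushes you to $\phi(4r)$; you must set up the dyadic scale so the recursion closes (e.g.\ compare $\phi(\lambda r)$ to $\phi(r)$ after first doing Caccioppoli from $B_{r}$ to $B_{r/\lambda}$). (ii) Bounding $\int_{B_r}\vec f\cdot\nabla w$ via $\|\vec f\|_{L^2(B_r)}$ needs the admissibility hypothesis applied with a cutoff supported in a \emph{larger} ball; make sure the scales match. Your identification of the main obstacle---absorbing the self-referential $\omega_1|K_r|$ feedback before extracting decay, which is precisely the role of $\bar r$---is exactly right and mirrors the paper's Step~3.
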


\begin{thm}\label{thm1.2}
For any $g\in K_{\eta_{g}}(B_{1})$, $\vec{f}\in K^{1}_{\eta_{f}}(B_{1})\cap L^{2}(B_{1})^{n}$, $V\in K_{\eta_{V}}(B_{1})$ with
$$\|V\|_{K(B_{1})}\leq\delta
$$
for some $\delta$ sufficiently small, if $u\in W^{1,2}(B_{1})$ is a weak solution of $(\ref{1})$ in $B_{1}$,
then $u\in C(B_{1})$ and $u$ is locally bounded with the estimate:
\begin{eqnarray*}
\|u\|_{L^{\infty}(B_{\frac{1}{2}})}\leq C\left(\|u\|_{L^{2}(B_{1})}+\|\vec{f}\|_{L^{2}(B_{1})}+\|\vec{f}\|_{K^{1}(B_{1})}+\|g\|_{K(B_{1})}\right).
\end{eqnarray*}
\end{thm}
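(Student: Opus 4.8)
The strategy is to run a Campanato-type scale-by-scale iteration of the one-step decay estimate of Lemma~\ref{key1}, in parallel to the proof of Theorem~\ref{linfty}; the genuinely new point is that the membership hypotheses $V\in K_{\eta_V}(B_1)$, $g\in K_{\eta_g}(B_1)$, $\vec{f}\in K^1_{\eta_f}(B_1)$ must be converted into the bilinear and linear bounds that feed Lemma~\ref{key1}, uniformly over all balls $B_r(x_0)\subset B_1$. For $\varphi\in W^{1,2}_0(B_r(x_0))$ one has the representation $|\varphi(x)|\le c_n\int_{B_r(x_0)}|x-y|^{1-n}|\nabla\varphi(y)|\,dy$; combining it with the Schur-type weighted Riesz-potential estimates that bound $\int |V|(I_1h)^2$ by $\big(\sup_x\int_{B_{2r}(x)}|V(y)|\,|x-y|^{2-n}dy\big)\int h^2$ (and the analogous one for $|\vec{f}|^2$ and for $g$) gives, uniformly in $x_0$ and in $0<r\le\tfrac12$,
\begin{eqnarray*}
&&\int_{B_r(x_0)}|V|\,|\varphi|^2\,dx\le C_n\,\eta_V(2r)\int_{B_r(x_0)}|\nabla\varphi|^2\,dx,\\
&&\int_{B_r(x_0)}|\vec{f}|^2|\varphi|^2\,dx\le C_n\,\eta_f(2r)^2\int_{B_r(x_0)}|\nabla\varphi|^2\,dx,\\
&&|\langle g,\varphi\rangle|\le \frac{C_n\,\eta_g(2r)}{r^2}\,|B_r|^{\frac{n+2}{2n}}\,\|\nabla\varphi\|_{L^2(B_r(x_0))},
\end{eqnarray*}
which are exactly the hypotheses of Theorem~\ref{linfty} with $\omega_1(r)\rightsquigarrow C_n\eta_V(2r)$ and $\omega_2(r)\rightsquigarrow C_n(\eta_f(2r)+\eta_g(2r))$. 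Since $\eta_V$ is nondecreasing and $\|V\|_{K(B_1)}\le\delta$, the $V$-input is at most $C_n\delta$ at \emph{every} scale and centre; choosing $\delta$ so small that $C_n\delta\le\delta_0$, Lemma~\ref{key1} applies on every $B_r(x_0)$ with $B_{2r}(x_0)\subset B_1$. This is precisely where the hypotheses of Theorem~\ref{linfty} get relaxed: because the $V$-contribution is uniformly small we do not need $\eta_V,\eta_g,\eta_f$ to be Dini — the moduli of $g$ and $\vec{f}$ only need to \emph{vanish} at $0$.

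For the iteration, fix $\theta\in(0,\tfrac12]$ matching the homogeneous decay constant $\lambda<1$ of Lemma~\ref{key1}, and pick $r_0\in(0,\tfrac14]$, depending only on $n$ and the moduli $\eta_g,\eta_f$, so small that $C_n(\eta_f(2r)+\eta_g(2r))\le\delta_0$ for all $r\le r_0$. For $x_0\in B_{1/2}$ set $\Phi(x_0,r):=\big(\frac{1}{|B_r|}\int_{B_r(x_0)}|u-c_{x_0,r}|^2dx\big)^{1/2}$ with $c_{x_0,r}$ the mean of $u$ over $B_r(x_0)$. Lemma~\ref{key1}, fed with the conversion bounds above, yields a one-step inequality of the (schematic) form
\[
\Phi(x_0,\theta r)\le\big(\lambda+C_n\delta\big)\,\Phi(x_0,r)+C_n\big(\eta_f(2r)+\eta_g(2r)\big),\qquad 0<r\le r_0,
\]
any additional error produced by Lemma~\ref{key1} being a fixed multiple of $\|u\|_{L^2(B_r(x_0))}$, which is summable along the shrinking sequence of balls and therefore harmless. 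After shrinking $\delta$ once more so that $\lambda':=\lambda+C_n\delta<1$, iteration from $r_0$ gives
\[
\Phi(x_0,\theta^k r_0)\le(\lambda')^k\,\Phi(x_0,r_0)+C_n\sum_{j=0}^{k-1}(\lambda')^{\,k-1-j}\big(\eta_f(2\theta^j r_0)+\eta_g(2\theta^j r_0)\big).
\]
The first term tends to $0$ geometrically; the sum in the second term is bounded, for every $k$, by $\frac{C_n}{1-\lambda'}\big(\|\vec{f}\|_{K^1(B_1)}+\|g\|_{K(B_1)}\big)$, and it tends to $0$ as $k\to\infty$ because it is a geometrically weighted average of the null sequences $\eta_f(2\theta^j r_0),\eta_g(2\theta^j r_0)\to0$ — here, and only here, one uses $\vec{f}\in K^1(B_1)$, $g\in K(B_1)$ rather than mere finiteness of their norms. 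Interpolating between $\theta$-adic radii, $\Phi(x_0,r)\to0$ as $r\to0$ at a rate independent of $x_0\in B_{1/2}$, so the precise representative of $u$ is (uniformly) continuous on $B_{1/2}$; running the same argument on $B_\rho(y)\Subset B_1$ for arbitrary $y\in B_1$ gives $u\in C(B_1)$.

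For the $L^\infty$ bound I would sum the oscillation increments: for $x_0\in B_{1/2}$,
\[
|u(x_0)|\le|c_{x_0,r_0}|+\sum_{k\ge0}\big|c_{x_0,\theta^{k+1}r_0}-c_{x_0,\theta^k r_0}\big|\le|c_{x_0,r_0}|+C_n\sum_{k\ge0}\Phi(x_0,\theta^k r_0),
\]
and the last sum is dominated, via the displayed iteration and $\sum_k(\lambda')^k<\infty$, by $C_n\big(\Phi(x_0,r_0)+\tfrac{1}{1-\lambda'}(\|\vec{f}\|_{K^1(B_1)}+\|g\|_{K(B_1)})\big)$. Finally $|c_{x_0,r_0}|$ and $\Phi(x_0,r_0)$ are each $\le C(n,r_0)\|u\|_{L^2(B_1)}$ up to an energy correction controlled by $\|\vec{f}\|_{L^2(B_1)}+\|\vec{f}\|_{K^1(B_1)}+\|g\|_{K(B_1)}$ (using $\vec{f}\in L^2(B_1)^n$ and $g\in K(B_1)\subset W^{-1,2}(B_1)$ in the Caccioppoli/energy estimate at scale $r_0$), and since $r_0$ depends only on $n$ and the data moduli this yields $\|u\|_{L^\infty(B_{1/2})}\le C\big(\|u\|_{L^2(B_1)}+\|\vec{f}\|_{L^2(B_1)}+\|\vec{f}\|_{K^1(B_1)}+\|g\|_{K(B_1)}\big)$.

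The main obstacle, beyond bookkeeping, is the opening step: proving the three conversion inequalities with constants uniform over all $B_r(x_0)\subset B_1$, which is where the weighted Riesz-potential (Schur-test) estimates and the precise normalisations of the $K$ and $K^1$ classes enter, together with the verification that smallness of $\|V\|_{K(B_1)}$ keeps the perturbed contraction factor $\lambda'=\lambda+C_n\delta$ strictly below $1$ — this is exactly what lets the iteration converge with no Dini assumption. The passage from ``every point of $B_{1/2}$ is a Lebesgue point with a uniform modulus of continuity'' to genuine continuity of $u$, and the localisation from $B_{1/2}$ to $B_1$, are then routine.
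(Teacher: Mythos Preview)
Your approach has a genuine gap at the conversion step for $\vec f$. You assert that $\vec f\in K^1_{\eta_f}(B_1)$ implies
\[
\int_{B_r(x_0)}|\vec f|^{\,2}\,\varphi^2\,dx\le C_n\,\eta_f(2r)^2\int_{B_r(x_0)}|\nabla\varphi|^2\,dx,\qquad \varphi\in W^{1,2}_0(B_r(x_0)),
\]
via a ``Schur-type'' argument. But the Schur test for the kernel $K(x,y)=|\vec f(x)|\,|x-y|^{1-n}$ requires \emph{both} row and column integrals to be bounded; one of them is $|\vec f(x)|\int_{B_{2r}}|x-y|^{1-n}dy\sim r\,|\vec f(x)|$, which forces $\vec f\in L^\infty$, not $K^1$. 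In fact the inequality is false: in $\mathbb R^3$ take $|\vec f|=\epsilon^{-1/2}\chi_{\{|x_1|<\epsilon\}}$. Then $\vec f\in K^1\cap L^2$ with $\eta_f(2r)\sim\epsilon^{1/2}(1+\ln(r/\epsilon))$ for $r>\epsilon$, while testing with a cutoff $\varphi$ equal to $1$ on $B_{r/2}$ gives admissible-measure constant $\gtrsim r$; for fixed $r$ and $\epsilon\downarrow 0$ the ratio $r/\eta_f(2r)^2\to\infty$. So no universal $C_n$ works, and you cannot feed Lemma~\ref{key1} with this $\omega_2$. The $K^1$ hypothesis on $\vec f$ simply does not control the quadratic quantity $|\vec f|^2$ that Lemma~\ref{key1} needs.

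The paper's proof takes a completely different route that uses the $K^1$ condition \emph{linearly}, exactly as it is designed to be used: it first solves the Dirichlet problem $-\Delta u_1+Vu_1=-\operatorname{div}\vec f+g$, $u_1|_{\partial B_1}=0$, by mollification and a contraction-mapping argument (Theorem~\ref{thm5.9}), and the $L^\infty$ bound on $u_1$ comes directly from the Green representation $\bigl|\int\nabla_yG(x,y)\cdot\vec f(y)\,dy\bigr|\le C\int|x-y|^{1-n}|\vec f(y)|\,dy\le C\|\vec f\|_{K^1}$. The difference $u-u_1$ then solves the homogeneous equation $-\Delta v+Vv=0$, for which a separate local maximum principle (Theorem~\ref{katoconti}) gives $\|v\|_{L^\infty(B_{1/2})}\le C\|v\|_{L^2(B_1)}$. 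No Campanato iteration, no admissible-measure bound on $|\vec f|^2$, is involved. If you want to salvage an iteration approach, you would have to bypass Lemma~\ref{key1} and build a one-step estimate in which $\vec f$ enters only through $\int|x-y|^{1-n}|\vec f(y)|\,dy$; this effectively reintroduces the Green-function viewpoint.
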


With respect to the modulus of continuity, we refer the readers to \cite{K1997,K,2} for more details. Here we give some main properties as a complement.
\begin{rem}\label{remark1.3}
Any modulus of continuity $\omega(t)$ is non-decreasing, subadditive, continuous and satisfies $\omega(0)=0$. Hence any modulus of continuity $\omega(t)$ satisfies
\begin{eqnarray*}
\frac{\omega(r)}{r}\leq2\frac{\omega(h)}{h},\quad 0<h<r.
\end{eqnarray*}
\end{rem}

Theorem $\ref{linfty}$ is a pointwise regularity result. It not only implies the continuity of the solution, but also shows a priori estimate which clearly illustrates how the distributional coefficients and the nonhomogeneous terms influence the behavior of the solution near zero point.  This theorem also holds for other points inside $\Omega$, and the classical interior continuity can be proved straightforward by this pointwise continuity. This theorem provides a frame theory, especially when the modulus of continuity is H$\ddot{\text{o}}$lder continuous, above estimates imply that the solution is H$\ddot{\text{o}}$lder continuous, which cover the De Giorgi's H$\ddot{\text{o}}$lder regularity. Let us give some corollaries and remarks as follows.

\begin{crl}\label{conti}
We assume $\Omega'\Subset\Omega$. Under the assumptions of Theorem $\ref{linfty}$, furthermore, there exists a positive constant $R\leq\min\{1,\text{dist}(\Omega',\Omega)\}$ with $B_{R}(x_{0})\subset\Omega$ for any $x_{0}\in\Omega'$ such that for any $0<r\leq \displaystyle\frac{R}{2}$ and $\psi\in W^{1,2}(\Omega)$, $\varphi\in W_{0}^{1,2}(\Omega)$ with $\text{supp}\{\varphi\}\subset \overline{B_{r}(x_{0})}$,
$$
|\langle V\psi,\varphi\rangle|\leq \omega_{1}(s)\|\psi(\cdot+x_{0})\|_{L_{r,s}^{1,2}}\|\nabla\varphi\|_{L^{2}(B_{r}(x_{0}))},~~~ \forall r<s\leq 2r
$$
$$
\displaystyle\int_{B_{r}(x_{0})}|\vec{f}|^{2}|\varphi|^{2}dx\leq (\omega_{2}(r))^{2}\displaystyle\int_{B_{r}(x_{0})}|\nabla\varphi|^{2}dx,
$$
$$
|\langle g,\varphi\rangle|\leq \frac{\omega_{2}(r)}{r^{2}}|B_{r}|^{\frac{n+2}{2n}}\|\nabla\varphi\|_{L^{2}(B_{r})},
$$
where $\omega_{i}(r)$ is Dini modulus of continuity satisfying $\displaystyle\int_{0}^{R} \frac{\omega_{i}(r)}{r} d r<\infty$ for $i=1,2$.
If $u\in W^{1,2}(\Omega)$ is a weak solution of ($\ref{1}$), then $u$ is classical continuous in $\Omega'$ after necessary modification on a measure zero set. It follows that $u\in L^{\infty}(\Omega')$ and
$$\|u\|_{L^{\infty}(\Omega')}\leq C\left(\frac{\|u\|_{L^{2}(\Omega)}}{|B_{\bar{r}}|^{\frac{1}{2}}}+\omega_{2}(R)+\int_{0}^{R}\frac{\omega_{2}(r)}{r}dr\right),
$$
where $C$ depends on $n,\omega_{1},\Omega$, $R$, and $\bar{r}$ as defined in previous theorem.
\end{crl}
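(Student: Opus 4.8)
The plan is to deduce the corollary from Theorem \ref{linfty} by applying that theorem at every point of $\Omega'$ simultaneously, using translation invariance, and then upgrading the resulting pointwise $L^{2}$-continuity to honest pointwise continuity via a Campanato-type comparison of ball averages.

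First I would fix $x_{0}\in\Omega'$ and apply Theorem \ref{linfty} to the translated solution $u(\cdot+x_{0})$ on $B_{R}$. The three hypotheses listed in the corollary are exactly the translated versions of those in Theorem \ref{linfty}, carried by the \emph{same} moduli $\omega_{1},\omega_{2}$ and the \emph{same} radius $R$, so the theorem produces a constant $K=K(x_{0})$ with
$$
\left(\frac{1}{|B_{r}(x_{0})|}\int_{B_{r}(x_{0})}|u-K(x_{0})|^{2}\,dx\right)^{1/2}\le\Omega_{1}(r)+\Omega_{2}(r),\qquad 0<r\le R,
$$
together with $|K(x_{0})|\le CA$. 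The key point is that every constant here is uniform in $x_{0}$: the threshold $\bar r$ solves an equation involving only $\omega_{1},\lambda,\delta_{0},C_{0}$; and in $A$ one has $\frac{1}{|B_{\bar r}|}\int_{B_{R}(x_{0})}u^{2}\,dx\le\frac{1}{|B_{\bar r}|}\int_{\Omega}u^{2}\,dx$, while the remaining summand of $A$ involves only $\omega_{2},R,\lambda,\delta_{0}$. Since $\int_{0}^{R}\omega_{i}(s)/s\,ds<\infty$ — and, invoking Remark \ref{remark1.3}, the tail terms $r\int_{r}^{\bar r}\omega_{i}(s)/s^{2}\,ds$ are likewise controlled and vanish as $r\to0$ — we get $\Omega_{1}(r)+\Omega_{2}(r)\to0$. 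Hence $K(x_{0})$ is the $L^{2}$-Lebesgue value of $u$ at $x_{0}$; setting $\tilde u(x_{0}):=K(x_{0})$ gives a representative with $\tilde u=u$ a.e.\ in $\Omega'$ (Lebesgue differentiation) and $\|\tilde u\|_{L^{\infty}(\Omega')}\le CA$. Bounding $A$ by $\frac{\|u\|_{L^{2}(\Omega)}}{|B_{\bar r}|^{1/2}}+C\big(\omega_{2}(R)+\int_{0}^{R}\omega_{2}(r)/r\,dr\big)$ and collecting constants into $C=C(n,\omega_{1},\Omega,R)$ yields the stated $L^{\infty}$ estimate.

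Next I would prove that $\tilde u$ is continuous on $\Omega'$. Given $x,y\in\Omega'$ with $\rho:=|x-y|$ small enough that $r:=2\rho\le R$, one has $B_{\rho}(x)\subset B_{r}(x)\cap B_{r}(y)\subset\Omega$. Writing $m_{x}=\frac{1}{|B_{r}(x)|}\int_{B_{r}(x)}u\,dx$ and $m_{y}$ analogously, I would split
$$
|\tilde u(x)-\tilde u(y)|\le|K(x)-m_{x}|+|m_{x}-m_{y}|+|m_{y}-K(y)|,
$$
estimate the two outer terms by $\Omega_{1}(r)+\Omega_{2}(r)$ directly from the average decay above (with a dimensional constant from Jensen), and estimate $|m_{x}-m_{y}|$ by comparing both $m_{x}$ and $m_{y}$ to $\frac{1}{|B_{\rho}(x)|}\int_{B_{\rho}(x)}u\,dx$, using that $|B_{\rho}(x)|$, $|B_{r}(x)|$, $|B_{r}(y)|$ are mutually comparable. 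This gives $|\tilde u(x)-\tilde u(y)|\le C(n)\big(\Omega_{1}(2|x-y|)+\Omega_{2}(2|x-y|)\big)\to0$ as $|x-y|\to0$, uniformly on $\Omega'$, so $\tilde u\in C(\Omega')$; replacing $u$ by $\tilde u$ on the null set where they disagree finishes the argument.

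The hard part will not be any individual estimate — the analytic content is entirely furnished by Theorem \ref{linfty} — but the bookkeeping of \emph{uniformity} in the base point: one must confirm that the proof of Theorem \ref{linfty} produces $\bar r$, $A$, and the universal constants depending on the data only through $n,\omega_{1},\omega_{2},R,\lambda,\delta_{0},C_{0}$ and $\|u\|_{L^{2}(\Omega)}$, never through $x_{0}$ itself. Once that is granted, the passage from pointwise $L^{2}$-continuity to classical continuity is the routine telescoping/averaging argument sketched above.
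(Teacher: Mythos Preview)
Your proposal is correct and follows the approach the paper itself indicates: the authors do not give a separate proof of this corollary, remarking only that Theorem~\ref{linfty} ``also holds for other points inside $\Omega$, and the classical interior continuity can be proved straightforward by this pointwise continuity.'' Your argument --- applying Theorem~\ref{linfty} uniformly at each $x_{0}\in\Omega'$, identifying $K(x_{0})$ with the Lebesgue representative, and upgrading $L^{2}$-continuity to classical continuity by the overlapping-balls telescoping estimate --- is exactly the ``straightforward'' passage the paper has in mind, and your attention to the uniformity of $\bar r$, $A$, and the constants in $x_{0}$ is the one point that actually needs checking.
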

\begin{crl}\label{holder}
Under the assumptions of Theorem $\ref{linfty}$, moreover there exists a positive constant $R\leq1$ with $B_{R}\subset\Omega$, $0<\alpha_{i}<1$, $N_{i}>0$, $i=1,2$ such that
for any $0<r\leq \displaystyle\frac{R}{2}$ and $\psi\in W^{1,2}(\Omega)$, $\varphi\in W_{0}^{1,2}(\Omega)$ with $\text{supp}\{\varphi\}\subset \overline{B_{r}}$,
$$
|\langle V\psi,\varphi\rangle|\leq N_{1}r^{\alpha_{1}}\|\psi\|_{L_{r,s}^{1,2}}\|\nabla\varphi\|_{L^{2}(B_{r})}, ~~~ \forall r<s\leq 2r
$$
$$
\displaystyle\int_{B_{r}}|\vec{f}|^{2}|\varphi|^{2}dx\leq N_{2}^{2}r^{2\alpha_{2}}\displaystyle\int_{B_{r}}|\nabla\varphi|^{2}dx,
$$
$$
|\langle g,\varphi\rangle|\leq \frac{N_{2}r^{\alpha_{2}}}{r^{2}}|B_{r}|^{\frac{n+2}{2n}}\|\nabla\varphi\|_{L^{2}(B_{r})}.
$$
If $u\in W^{1,2}(\Omega)$ is a weak solution of ($\ref{1}$) in $\Omega$, then $u$ is $C^{\alpha}$ at 0 in the $L^{2}$ sense, where $\alpha=\min\{\alpha_{1},\alpha_{2}\}$. Furthermore, 
there are constants $C=C(n,R,\alpha_{1},\alpha_{2},N_{1})$ such that for any solution of $(\ref{1})$, there exists a constant $K$ such that for any $0<r\leq R$,
\begin{eqnarray*}
\left(\frac{1}{|B_{r}|}\displaystyle\int_{B_{r}}|u-K|^{2}dx\right)^{\frac{1}{2}}\leq
CAr^{\alpha},\quad 0<r\leq R,
\end{eqnarray*}
where
$$A=\|u\|_{L^{2}(B_{R})}+N_{2}.
$$

Especially, if $\alpha_{1}=\alpha_{2}=1$, then $u$ is continuous at 0 in the $L^{2}$ sense with the modulus of continuity $|r\ln r|$. In other words,
there are constants $C=C(n,R,N_{1})$ such that for any solution of $(\ref{1})$, there exists a constant $K$ such that for any $0<r\leq R$,
\begin{eqnarray*}
\left(\frac{1}{|B_{r}|}\displaystyle\int_{B_{r}}|u-K|^{2}dx\right)^{\frac{1}{2}}\leq
CA|r\ln r|,\quad 0<r\leq R,
\end{eqnarray*}
\end{crl}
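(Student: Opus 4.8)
The plan is to read off Corollary \ref{holder} from Theorem \ref{linfty} by taking the power moduli $\omega_1(r)=N_1r^{\alpha_1}$ and $\omega_2(r)=N_2r^{\alpha_2}$. First I would check these are legitimate choices: for $0<\alpha_i\le1$ the map $r\mapsto N_ir^{\alpha_i}$ is non-decreasing, concave (hence subadditive), vanishes at $0$, and $\int_0^R\omega_i(s)/s\,ds=(N_i/\alpha_i)R^{\alpha_i}<\infty$, so each $\omega_i$ is a Dini modulus of continuity and Theorem \ref{linfty} applies verbatim; for $\alpha_i=1$ the modulus is simply linear. Since the hypotheses of the corollary are exactly those of the theorem for this choice of $\omega_i$, everything reduces to evaluating the quantities $\bar r$, $A$, $\Omega_1(r)$, $\Omega_2(r)$ in the conclusion.

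Computing $\bar r$: because $\omega_1(\bar r)+\frac{1}{1-\lambda}\int_0^{\bar r}\omega_1(s)/s\,ds=N_1\bar r^{\alpha_1}\bigl(1+\frac{1}{(1-\lambda)\alpha_1}\bigr)$, the defining equation in Theorem \ref{linfty} becomes an explicit power equation whose solution is a positive constant $\bar r=\bar r(n,\alpha_1,N_1)$ (through the universal $\lambda,\delta_0,C_0$), to be replaced by $R$ if it comes out larger; note $\bar r<1$ since $\bar r^{\alpha_1}$ equals a small constant. Substituting $\omega_2(R)=N_2R^{\alpha_2}$ and $\int_0^R\omega_2(s)/s\,ds=(N_2/\alpha_2)R^{\alpha_2}$ into the formula for $A$ and absorbing $|B_{\bar r}|^{-1/2}$, $R^{\alpha_2}$, $(1-\lambda)^{-1}$ into the constant gives $A\le C(\|u\|_{L^2(B_R)}+N_2)$, matching the $A$ stated in the corollary; the bound $|K|\le CA$ then carries over directly from Theorem \ref{linfty}.

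Evaluating the profiles: for $0<r\le\bar r$ one has $\int_0^r\omega_i(s)/s\,ds=(N_i/\alpha_i)r^{\alpha_i}$ and, since $\alpha_i<1$, $r\int_r^{\bar r}\omega_i(s)/s^2\,ds=\frac{N_i}{1-\alpha_i}\bigl(r^{\alpha_i}-r\,\bar r^{\alpha_i-1}\bigr)\le\frac{N_i}{1-\alpha_i}r^{\alpha_i}$, while $r/\bar r\le\bar r^{-\alpha_1}r^{\alpha_1}$. Hence $\Omega_1(r)\le CAr^{\alpha_1}$ and $\Omega_2(r)\le CN_2r^{\alpha_2}\le CAr^{\alpha_2}$ on $(0,\bar r]$, whereas for $\bar r<r\le R$ the theorem gives $\Omega_i(r)=CA\le CA\bar r^{-\alpha}r^{\alpha}$. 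Since $0<r\le R\le1$ forces $r^{\alpha_1},r^{\alpha_2}\le r^{\alpha}$ with $\alpha=\min\{\alpha_1,\alpha_2\}$, summing the two bounds yields $(|B_r|^{-1}\int_{B_r}|u-K|^2)^{1/2}\le\Omega_1(r)+\Omega_2(r)\le CAr^{\alpha}$ on $(0,R]$, which is precisely the asserted $C^{\alpha}$ estimate.

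For the borderline case $\alpha_1=\alpha_2=1$ the exponent $\alpha_i-2=-1$ turns the inner integral into a logarithm: $r\int_r^{\bar r}\omega_i(s)/s^2\,ds=N_ir\ln(\bar r/r)$, whereas $\int_0^r\omega_i(s)/s\,ds=N_ir$. Assuming $R<1$ (as one may), one has $|\ln r|\ge|\ln R|>0$ on $(0,R]$ and $\bar r<1$, so $r\ln(\bar r/r)\le r|\ln r|$ and $r/\bar r\le(\bar r|\ln R|)^{-1}\,r|\ln r|$; collecting terms gives the bound $CA|r\ln r|$ on $(0,R]$. The only delicate points throughout are bookkeeping ones: making sure the constants $\tfrac{1}{1-\alpha_i}$ generated by the integration stay finite for $\alpha_i<1$ and degenerate precisely into the $\ln$-factor at $\alpha_i=1$, and checking on $\bar r<r\le R$ that the constant bound $CA$ coming from Theorem \ref{linfty} is still $\lesssim r^{\alpha}$ (resp.\ $\lesssim|r\ln r|$), which holds because $r$ is bounded below there.
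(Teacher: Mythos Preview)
Your proposal is correct and follows exactly the approach the paper takes: the paper's proof consists of the single sentence ``we only need to set $\omega_{1}(r)=N_{1}r^{\alpha_{1}}$, $\omega_{2}(r)=N_{2}r^{\alpha_{2}}$ in Theorem~\ref{linfty}'' (and analogously for the $\alpha_i=1$ case), and you have simply carried out the explicit bookkeeping that this substitution entails. Your verification of the integrals, the handling of the range $\bar r<r\le R$, and the degeneration to the logarithm at $\alpha_i=1$ (with the harmless reduction to $R<1$) are all sound.
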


\begin{rem}
If we replace $-\Delta u$ with $-D_{j}(a_{ij}D_{i}u)$ in $(\ref{1})$ and $a_{ij}$ satisfies uniformly elliptic condition, then Theorem $\ref{linfty}$ and the previous two corollaries still hold.
\end{rem}

We also have observed that there have some equivalences between the Dini decay conditions proposed in Theorem $\ref{linfty}$ and the necessary and sufficient conditions on the form boundedness for second order differential operators proposed by Maz'ya and Verbitsky in \cite{MV02}. The following remark says that if $\langle V\cdot,\cdot\rangle$ satisfies Dini decay condition, then we can characteristic $V=\text{div}\vec{\Gamma}_{r,y}+d^{-1}_{\partial B_{r}(y)}(x)\Gamma_{r,y,0}$ in any $B_{r}(y)\subset B_{1}$ and $\|\Gamma_i\|_{M(W^{1,2}_{0}(B_{r}(y)) \rightarrow L^2(B_{r}(y)))}$ can be controlled by the same Dini modulus of continuity. Conversely, it is also true. Besides, the remark also illustrates the reason why we make such assumption on $V$ to some extent.
\begin{rem}\label{rem5.15}Let $V\in M(W^{1,2}(B_{1}),W^{-1,2}(B_{1}))$. For any $0<r\leq\displaystyle\frac{1}{2}$ and for any $y\in B_{1}$ satisfying $B_{2r}(y)\subset B_{1}$, the following two statements are equivalent:\\
(1) For any $\psi\in W^{1,2}(B_{1})$, $\varphi\in W_{0}^{1,2}(B_{1})$ with $\text{supp}\{\varphi\}\subset \overline{B_{r}(y)}$, the following inequality holds:
\begin{equation}\label{eqv1}
|\langle V\psi,\varphi\rangle|\leq \inf_{r<s\leq2r}\omega(s)\|\psi(\cdot+y)\|_{L_{r,s}^{1,2}}\|\nabla\varphi\|_{L^{2}(B_{r}(y))}.
\end{equation}
where $\omega(r)$ is a Dini modulus of continuity satisfying $\displaystyle\int_{0}^{1}\frac{\omega(r)}{r}dr<\infty$.

(2) There exists $\vec{\Gamma}_{r,y}=(\Gamma_{r,y,1}, \cdots, \Gamma_{r,y,n})$ and $\Gamma_{r,y,0}$ such that $\Gamma_{r,y,i}\in M(W^{1,2}_{0}(B_{r}(y)) \rightarrow L^2(B_{r}(y)))$ for $0\leq i\leq n$, and $V=\text{div}\vec{\Gamma}_{r,y}+d^{-1}_{\partial B_{r}(y)}(x)\Gamma_{r,y,0}$ in $B_{r}(y)$ and
\begin{equation}\label{eqv2}
\sum_{0 \leq i \leq n}\|\Gamma_i\|_{M(W^{1,2}_{0}(B_{r}(y)) \rightarrow L^2(B_{r}(y)))} \leq C\omega(r).
\end{equation}
\end{rem}

The proof of Remark $\ref{rem5.15}$ will be given in the last section.

For the proof of Theorem $\ref{linfty}$, We will  use the perturbation technique and the compactness method. The perturbation technique can be tracked back to  \cite{Ca,Ca1989} in which they used it to prove the interior pointwise $C^{1,\alpha}$ and $C^{2,\alpha}$ regularity of the solutions for the fully nonlinear elliptic equations. The main idea is to approximate the solution by linear functions or second order polynomials in different scales like $B_{\lambda},~B_{\lambda^{2}},\cdots,~B_{\lambda^{k}},\cdots$ where $\lambda<1$ with the error as $\lambda^{k(1+\alpha)}$ or $\lambda^{k(2+\alpha)}$. In this paper, we will approximate the solution by constants and prove the sum of the error from different scales is convergent, which leads to the continuity. To achieve this goal, the key step is to get the approximation in $B_{\lambda}$ for some $\lambda<1$, then by scaling and iteration, the proof will be finished. The compactness method is inspired by \cite{BW04,W92} which is an extremely powerful tool in nonlinear analysis and will be used to prove the key lemma. The compactness method requires no solvability of Dirichlet problems, so we do not need to consider the equation of the difference of the solution and its approximation. In fact, we will not use any solvability throughout the proof.

For Theorem $\ref{thm1.2}$, Aizenman and Simon have already proved the continuity of solution when $|\vec{f}|=g=0$ in \cite{AS1982} mainly relies on the probabilistic technique and the properties of Green function. Here a new proof will be given and the method maybe seems like more PDE's. In particular, we first solve a kind of approximate equations with mollified coefficients and nonhomogeneous terms to get the smooth solutions and their uniformly estimates, then we use the fixed point theorem to prove the existence of a kind of  Dirichlet problem to get weak solutions, and finally we show the local $L^{\infty}$ estimate for a simple equation $-\Delta u+Vu=0$ by using the weak-$\ast$ convergence of a series of smooth functions with uniform $L^{\infty}$ norm.

The conditions on the coefficients in  Theorem $\ref{linfty}$ is formulated in some sense. Next, we will give some examples.

\begin{ex}\label{ex1.6}
Assume $n\geq3$. Let $\vec{f}\in L^{q}(\Omega)^{n}$, $g\in L^{\frac{nq}{n+q}}(\Omega)$, $V\in L^{\frac{q}{2}}(\Omega)$ for some $2n>q>n$. In this case, we know that the weak solution of $(\ref{1})$ belongs to $C^{\alpha}(\Omega')$ for some $0<\alpha<1$ where $\Omega'\Subset \Omega$. While, by using  Theorem $\ref{linfty}$, we also can show the above $C^{\alpha}$ regularity. In fact, there exists $0<R\leq\min\{1,\text{dist}(\Omega',\Omega)\}$ such that $B_{R}(x_{0})\subset\Omega$ for any $x_{0}\in\Omega'$. Then for any $0<r\leq \displaystyle\frac{R}{2}$ and $\psi\in W^{1,2}(\Omega)$, $\varphi\in W_{0}^{1,2}(\Omega)$ with $\text{supp}\{\varphi\}\subset\overline{B_{r}(x_{0})}$, $\langle V\psi,\varphi\rangle$, $\langle g,\varphi\rangle$ can be viewed as $\displaystyle\int_{B_{r}(x_{0})}V\phi\varphi dx$ and $\displaystyle\int_{B_{r}(x_{0})}g\varphi dx$ respectively. Using H\"{o}lder inequality, Sobolev inequality and Poincar\'{e} inequality it follows that
\begin{eqnarray*}
\left|\langle V\psi,\varphi\rangle\right|=\left|\displaystyle\int_{B_{r}(x_{0})}V\psi\varphi dx\right|&\leq&\|V\|_{L^{\frac{n}{2}}(B_{r}(x_{0}))}\|\psi\|_{L^{\frac{2n}{n-2}}(B_{r}(x_{0}))}\|\varphi\|_{L^{\frac{2n}{n-2}}(B_{r}(x_{0}))}\\
&\leq&C(n,q)r^{2-\frac{2n}{q}}\|V\|_{L^{\frac{q}{2}}(B_{r}(x_{0}))}\|\psi\|_{W^{1,2}(B_{r}(x_{0}))}\|\nabla\varphi\|_{L^{2}(B_{r}(x_{0}))}\\
&\leq&C(n,q,\|V\|_{L^{\frac{q}{2}}(\Omega)})r^{2-\frac{2n}{q}}\|\psi\|_{W^{1,2}(B_{r}(x_{0}))}\|\nabla\varphi\|_{L^{2}(B_{r}(x_{0}))}\\
&\leq&C(n,q,\|V\|_{L^{\frac{q}{2}}(\Omega)})
s^{2-\frac{2n}{q}}\|\psi(\cdot+x_{0})\|_{L^{1,2}_{r,s}}\|\nabla\varphi\|_{L^{2}(B_{r}(x_{0}))},
\end{eqnarray*}
for any $r<s\leq 2r$. Similarly,
\begin{eqnarray*}
\displaystyle\int_{B_{r}(x_{0})}|\vec{f}|^{2}\varphi^{2}dx
&\leq&C(n,q,\|\vec{f}\|_{L^{q}(\Omega)})r^{2-\frac{2n}{q}}\int_{B_{r}(x_{0})}|\nabla\varphi|^{2}dx,
\end{eqnarray*}
\begin{eqnarray*}
\left|\langle g,\varphi\rangle\right|=\left|\displaystyle\int_{B_{r}(x_{0})}g\varphi dx\right|
&\leq&C(n,q,\|g\|_{L^{\frac{nq}{n+q}}(\Omega)})r^{-1-\frac{n}{q}}|B_{r}|^{\frac{n+2}{2n}}\|\nabla\varphi\|_{L^{2}(B_{r}(x_{0}))}.
\end{eqnarray*}
We set $\omega_{1}(r)=C(n,q,\|V\|_{L^{q}(\Omega)})r^{2-\frac{2n}{q}}$, $\omega_{2}(r)=C(n,q,\|\vec{f}\|_{L^{q}(\Omega)},\|g\|_{L^{\frac{nq}{n+q}}(\Omega)})r^{1-\frac{n}{q}}$.  then by Theorem $\ref{linfty}$ or Corollary $\ref{holder}$, it follows that $u\in C^{1-\frac{n}{q}}(\Omega')$.
The special case is $q=2n$ when $|\vec{f}|=g=0$, Corollary $\ref{holder}$ implies that $u\in C(\Omega')$ with modulus of continuity $|r\ln r|$. This example shows that our theorem cover the classical H\"{o}lder regularity.
\end{ex}

\begin{ex}\label{ex1.7}
Let $\vec{f}=0$, $g=0$, $V(x)=\displaystyle\frac{1}{|x|^{2}(-\ln |x|)^{\frac{2}{n}+1}}$. It is easy to check   $V\in L^{\frac{n}{2}}(B_{r})$ for any $r<\displaystyle\frac{1}{2}$ and $V\notin L^{\frac{n}{2}+\delta}(B_{r})$ for any $\delta>0$. Furthermore,  by  a simple calculation, we have
\begin{eqnarray*}
r^{2}\left(\frac{1}{|B_{r}|}\int_{B_{r}}|V(x)|^{\frac{n}{2}}dx\right)^{\frac{2}{n}}
=r^{2}\left(\frac{1}{|B_{r}|}\int_{B_{r}}\frac{1}{|x|^{n}(-\ln |x|)^{1+\frac{n}{2}}}dx\right)^{\frac{2}{n}}=\frac{C}{-\ln r}.
\end{eqnarray*}
But for any $\hat{r}>0$, the integral $\displaystyle\int_{0}^{\hat{r}}\frac{1}{r(-\ln r)}dr$ is not convergent. Hence $V\notin C^{-2,\text{Dini}}(0)$ in $L^{\frac{n}{2}}$ sense. Then by classical result, we cannot get the continuity of $u$.

However, by using  Theorem $\ref{linfty}$, we can show the weak solution $u$ is continuous. In fact, if we set
$$\vec{h}=\frac{1}{n-2}\left(\frac{x_{1}}{|x|^{2}(-\ln |x|)^{\frac{2}{n}+1}},\frac{x_{2}}{|x|^{2}(-\ln |x|)^{\frac{2}{n}+1}},\cdots,\frac{x_{n}}{|x|^{2}(-\ln |x|)^{\frac{2}{n}+1}}\right),
$$
$$
\gamma(x)=-\frac{2+n}{n(n-2)}\frac{1}{|x|^{2}(-\ln |x|)^{\frac{2}{n}+2}},
$$
we have
 $$V(x)=\text{div}\vec{h}+\gamma.$$
Therefore, for any $\psi\in W^{1,2}(B_{1})$, $\varphi\in W_{0}^{1,2}(B_{1})$ with $\text{supp}\{\varphi\}\subset\overline{B}_{r}$, we obtain that
$$\langle V\psi,\varphi\rangle=\displaystyle\int_{B_{r}}V\psi\varphi dx=-\displaystyle\int_{B_{r}}\vec{h}\cdot\nabla(\psi\varphi) dx+\displaystyle\int_{B_{r}}\gamma\psi\varphi dx\triangleq I_{1}+I_{2}.
$$
By H\"{o}lder inequality, we can estimate
\begin{eqnarray*}
|I_{1}|&\leq&\frac{1}{(n-2)(-\ln r)^{\frac{2}{n}+1}}\int_{B_{r}}\frac{1}{|x|}(|\psi||\nabla\varphi|+|\varphi||\nabla\psi|)dx\\
&\leq&\frac{1}{(n-2)(-\ln r)^{\frac{2}{n}+1}}\left[\left(\int_{B_{r}}\frac{|\psi|^{2}}{|x|^{2}}dx\right)^{\frac{1}{2}}\left(\int_{B_{r}}|\nabla\varphi|^{2}dx\right)^{\frac{1}{2}}
+\left(\int_{B_{r}}\frac{|\varphi|^{2}}{|x|^{2}}dx\right)^{\frac{1}{2}}\left(\int_{B_{r}}|\nabla\psi|^{2}dx\right)^{\frac{1}{2}}\right],
\end{eqnarray*}
and
\begin{eqnarray*}
|I_{2}|&\leq&\frac{2+n}{n(n-2)(-\ln r)^{\frac{2}{n}+2}}\int_{B_{r}}\frac{1}{|x|^{2}}|\psi||\varphi|dx\\
&\leq&\frac{2+n}{n(n-2)(-\ln r)^{\frac{2}{n}+2}}\left(\int_{B_{r}}\frac{|\psi|^{2}}{|x|^{2}}dx\right)^{\frac{1}{2}}
\left(\int_{B_{r}}\frac{|\varphi|^{2}}{|x|^{2}}dx\right)^{\frac{1}{2}}.
\end{eqnarray*}
By Hardy inequality (given by Theorem 4 of Section 5.8 in \cite{E2010}) and Poincar\'{e} inequality, it follows that
\begin{eqnarray*}
\left(\int_{B_{r}}\frac{|\psi|^{2}}{|x|^{2}}dx\right)^{\frac{1}{2}}\leq C\left(\int_{B_{r}}\frac{|\psi|^{2}}{r^{2}}+|\nabla\psi|^{2}dx\right)^{\frac{1}{2}}\leq C\|\psi\|_{L^{1,2}_{r,s}},\quad \forall ~r<s\leq2r,
\end{eqnarray*}
\begin{eqnarray*}
\left(\int_{B_{r}}\frac{|\varphi|^{2}}{|x|^{2}}dx\right)^{\frac{1}{2}}\leq C\left(\int_{B_{r}}\frac{|\varphi|^{2}}{r^{2}}+|\nabla\varphi|^{2}dx\right)^{\frac{1}{2}}\leq C\left(\int_{B_{r}}|\nabla\varphi|^{2}dx\right)^{\frac{1}{2}}.
\end{eqnarray*}
Hence for some $r_{0}$ small enough, when $r\leq r_{0}$, we have
$$|\langle V\psi,\varphi\rangle|\leq|I_{1}|+|I_{2}|\leq \frac{C}{(-\ln s)^{\frac{2}{n}+1}}\|\psi\|_{L^{1,2}_{r,s}}\|\nabla\varphi\|_{L^{2}(B_{r})},  \quad \forall r<s\leq2r.
$$
Thus we take that $\omega_{1}(r)=\displaystyle\frac{C}{(-\ln r)^{\frac{2}{n}+1}}$. It follows that
$$\int_{0}^{r_{0}}\frac{\omega_{1}(r)}{r}dr<\infty.
$$
Therefore $u$ is continuous at 0 from Theorem $\ref{linfty}$.
\end{ex}

The remaining sections are organized as follows. In Section 2, we give some notations, some definitions which including the definition of the weak solution of $(\ref{1})$. In Section 3, we show an energy estimate and a key lemma by using the compactness method. The proof of Theorem $\ref{linfty}$ and its corollaries will be given in Section 4. In addition, Kato class condition is further considered in Section 5 and we give the proof of Theorem $\ref{thm1.2}$. In the end of this paper, we will give some additional remarks and its proof in Section 6.
\section{Notations and definition of weak solutions}
In this section, we give some notations used in this paper. Then we will give some definitions about bounded bilinear mapping, bounded linear functional and admissible measure for $W_{0}^{1,2}(\Omega)$, and introduce Kato class and $K^{\alpha}$ class. The most important part in this section is to define the weak solution of the equation.\\
%
%
%

In the sequel, we denote by $W^{-1,2}(\Omega)$ the dual space to $W_{0}^{1,2}(\Omega)$, i.e. the class of the bounded linear functional on $ W_{0}^{1,2}(\Omega)$. We write $\langle~,~\rangle$ to denote the pairing between $W^{-1,2}(\Omega)$ and $W_{0}^{1,2}(\Omega)$. Moreover, we say that $g\in W^{-1,2}(\Omega)$, i.e. $g$ is a bounded linear functional on $ W_{0}^{1,2}(\Omega)$, and $\langle g,v\rangle$ satisfies
$$
|\langle g,v\rangle|\leq C\|\nabla v\|_{L^{2}(\Omega)},
$$
for any $v\in W_{0}^{1,2}(\Omega)$, where the constant $C$ depends only on dimension $n$ and $\Omega$ but independent of $v$. We also denote $g(v)=\langle g,v\rangle$.

\begin{df}\label{fbv} For a linear operator $V$ from $W^{1,2}(\Omega)$ to $W^{-1,2}(\Omega)$, if the bilinear mapping$$\langle V\cdot,\cdot\rangle:~W^{1,2}(\Omega)\times W_{0}^{1,2}(\Omega)\rightarrow \mathbb{R}$$
is bounded, i.e. there exists a constant $C$ depends only on dimension $n$ and $\Omega$ such that
\begin{equation}\label{vf1}
|\langle Vu,v\rangle|\leq C\|u\|_{W^{1,2}(\Omega)}\|\nabla v\|_{L^{2}(\Omega)}.
\end{equation} for any $u\in W^{1,2}(\Omega)$ and  $v\in W_{0}^{1,2}(\Omega)$, we call this $V$ as a bounded linear multiplier  from $W^{1,2}(\Omega)$ to $W^{-1,2}(\Omega)$. We also write $M(W^{1,2}(\Omega)\rightarrow W^{-1,2}(\Omega))$ to denote the class of the bounded linear multipliers from $W^{1,2}(\Omega)$ to $W^{-1,2}(\Omega)$.
\end{df}

\begin{rem}In virtue of the norm of an operator, if $V\in M(W^{1,2}(\Omega)\rightarrow W^{-1,2}(\Omega))$, then we can write that
$$\|V(x)\|_{M(W^{1,2}(\Omega)\rightarrow W^{-1,2}(\Omega))}\leq C.$$
Compared with \cite{MV02}, we remove the restriction $\langle Vu,v\rangle=\langle V,uv\rangle$ in this paper.
\end{rem}

Next we introduce  a class of admissible measure for $W_{0}^{1,2}(\Omega)$ which is a local version of the admissible measure for $W_0^{1,2}(\mathbb{R}^{n})$ mentioned in the introduction. We refer the readers to see \cite{MV02,MV06} for more details.
\begin{df}\label{dm12}
We say a nonnegative Borel measure $\mu$ on $\Omega$ belongs to the class of admissible measures for $W_{0}^{1,2}(\Omega)$, if $\mu$ obeys the trace inequality,
\begin{equation}\label{m1}
\displaystyle\int_{\Omega}|\varphi|^{2}d\mu\leq C\|\nabla\varphi\|_{L^{2}(\Omega)}^{2},\quad \forall~\varphi\in W_{0}^{1,2}(\Omega),
\end{equation}
where the constant $C$ only depends on $n$ and $\Omega$ but does not depend on $\varphi$. We also write $M(W_0^{1,2}(\Omega)\rightarrow L^{2}(\Omega))$ to denote the class of admissble measures from $W_0^{1,2}(\Omega)$ to $L^{2}(\Omega)$. Especially, for admissible measures $q(x)dx$ with nonnegative density $q\in L^{1}(\Omega)$, we will write $(\ref{m1})$ as
\begin{eqnarray*}
\displaystyle\int_{\Omega}|\varphi|^{2}q(x)dx\leq C\|\nabla\varphi\|_{L^{2}(\Omega)}^{2},\quad \forall~\varphi\in W_{0}^{1,2}(\Omega).
\end{eqnarray*} Similarly, if $q(x)\in M(W_0^{1,2}(\Omega)\rightarrow L^{2}(\Omega))$, then we can write that
$$\|q(x)\|_{M(W_0^{1,2}(\Omega)\rightarrow L^{2}(\Omega))}\leq C.$$
\end{df}

Based on above definitions, we can define the weak solutions of equation $(\ref{1})$.
\begin{df}\label{ws1} Let $\Omega$ be a bounded domain. We assume $V\in M(W^{1,2}(\Omega)\rightarrow W^{-1,2}(\Omega))$, $\vec{f}\in L^{2}(\Omega)^{n}$ satisfying that $|\vec{f}|^2$ is an admissible measure for $W_{0}^{1,2}(\Omega)$, and $g\in W^{-1,2}(\Omega)$.
We say $u\in W^{1,2}(\Omega)$ is a weak solution of
$$-\Delta u+Vu=-\text{div}\vec{f}+g\quad \text{in}~\Omega,
$$
if for all $\varphi\in W_{0}^{1,2}(\Omega)$, we have
$$\displaystyle\int_{\Omega}\nabla u\cdot \nabla\varphi dx+\langle Vu,\varphi\rangle=\displaystyle\int_{\Omega}\vec{f}\cdot \nabla\varphi dx+\langle g,\varphi\rangle.
$$
\end{df}

\begin{rem}By Definition $\ref{fbv}$, we know that $\langle Vu,\varphi\rangle$ is well-defined. It is also easy to that the other terms in the definition of the weak solution are well defined. It can be seen that $\vec{f}\in L^{2}(\Omega)^{n}$ is enough to define the weak solution. Here we also assume that $|\vec{f}|^2$ is an admissible measure for $W_{0}^{1,2}(\Omega)$, which is a basic assumption when we study in the sequel the continuity of weak solutions.
\end{rem}

Besides, the weak solution of $(\ref{1})$ can also be defined under Kato class type assumptions on $V,~\vec{f},~g$. Next we define $K^{\alpha}$ class and introduce the special case: Kato class.
\begin{df}[$K^{\alpha}$ class]\label{ka}
Let $\Omega$ be a bounded domain in $\mathbb{R}^{n}(n\geq3)$. For $\alpha >0$, we say that a function $V:\Omega\rightarrow \mathbb{R}$ belongs to $K^{\alpha}$ class with  modulus of continuity $\eta(r)$ satisfying $\lim\limits_{r\rightarrow0}\eta(r)=0$, denoting $V\in K^{\alpha}_{\eta}(\Omega)$, if
$$\sup\limits_{x\in\Omega}\int_{\Omega\cap B_{r}(x)}\frac{|V(y)|}{|x-y|^{n-\alpha}}dy\leq\eta(r).
$$
And we denote
$$\|V\|_{K^{\alpha}(\Omega)}=\sup\limits_{x\in\Omega}\int_{\Omega}\frac{|V(y)|}{|x-y|^{n-\alpha}}dy.
$$
Obviously, if $V\in K^{\alpha}_{\eta}(\Omega)$, then $V\in L^{1}(\Omega)$.

Especially, when $\alpha=2$, we say $V$ belongs to Kato class. For convenience, we denote $K^{2}_{\eta}(\Omega)$ by $K_{\eta}(\Omega)$ and
$$\|V\|_{K(\Omega)}=\sup\limits_{x\in\Omega}\int_{\Omega}\frac{|V(y)|}{|x-y|^{n-2}}dy.
$$
\end{df}
\begin{rem}
For any $p>\displaystyle\frac{n}{2}$, $V\in L^{p}(\Omega)$ can guarantee that $V\in K(\Omega)$. But $L^{\frac{n}{2}}(\Omega)$ is incomparable with $K(\Omega)$ for $n\geq3$.
\end{rem}
\begin{rem}\label{rem2.9}

By definition and Theorem 4.15 in \cite{AS1982}, if $V\in K_{\eta}(\Omega)$, then the function
$$h(x)=\int_{\Omega}\frac{|V(y)|}{|x-y|^{n-2}}dy
$$
is well defined for all $x\in\Omega$, and $h(x)$ is a continuous function in $\Omega$.
\end{rem}


For Kato class, the following property can be found in \cite{CGL1993}.
\begin{lm}\label{lm5.4}
Assume $V\in K_{\eta}(\Omega)$. If $u\in W_{\emph{loc}}^{1,2}(\Omega)$, $x_{0}\in \Omega$ and $\overline{B_{R+2r}(x_{0})}\subseteq\Omega$, then
$$\int_{B_{R}(x_{0})}|V(x)|u^{2}(x)dx\leq C\left(\sup_{x\in B_{R+2r}(x_{0})}\int_{B_{2r}(x)}\frac{|V(y)|}{|x-y|^{n-2}}dy\right)
\left(\frac{1}{r^{2}}\|u\|_{L^{2}(B_{R+r}(x_{0}))}^{2}+\|\nabla u\|_{L^{2}(B_{R+r}(x_{0}))}^{2}\right).
$$
\end{lm}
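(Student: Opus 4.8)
The plan is to reduce the estimate to a local trace (``admissibility'') inequality for the measure $d\mu=|V|\,dx$ at the scale $r$; the Kato hypothesis enters only through a uniform $(n-2)$-density bound on $\mu$.

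\emph{Step 1: cut-off and reduction.} After a translation I would assume $x_{0}=0$. Pick $\zeta\in C_{0}^{\infty}(B_{R+r})$ with $\zeta\equiv1$ on $B_{R}$, $0\le\zeta\le1$, $|\nabla\zeta|\le C/r$, and set $w=\zeta u\in W^{1,2}(\mathbb{R}^{n})$; it is supported in $B_{R+r}$ and satisfies $\|w\|_{L^{2}}^{2}\le\|u\|_{L^{2}(B_{R+r})}^{2}$ and $\|\nabla w\|_{L^{2}}^{2}\le C\big(\|\nabla u\|_{L^{2}(B_{R+r})}^{2}+r^{-2}\|u\|_{L^{2}(B_{R+r})}^{2}\big)$. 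Since $|V|\ge0$ and $w=u$ on $B_{R}$, it suffices to bound $\int_{\mathbb{R}^{n}}|V|w^{2}=\int w^{2}\,d\mu$, where I take $\mu$ to be $|V|\,dx$ restricted to $B_{R+r}$ (a finite measure, since $V\in K_{\eta_{V}}(\Omega)\subset L^{1}(\Omega)$). With $M:=\sup_{x\in B_{R+2r}}\int_{B_{2r}(x)}|V(y)|\,|x-y|^{2-n}\,dy$, the trivial bound $|x-y|^{2-n}\ge\rho^{2-n}$ on $B_{\rho}(x)$ yields the density estimate $\mu(B_{\rho}(x))\le M\rho^{n-2}$ for all $x\in B_{R+2r}$ and all $\rho\le2r$.

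\emph{Step 2: localization at scale $r$ and the ``constant'' part.} I would take a maximal $(r/8)$-separated set $\{z_{k}\}\subset B_{R+r}$, so that $\{B_{r/4}(z_{k})\}$ covers $\text{supp}\,w$ while, for each fixed $\rho\le2r$, the family $\{B_{\rho}(z_{k})\}$ has overlap bounded by a dimensional constant. Letting $c_{k}$ be the average of $w$ over $B_{r/2}(z_{k})$ and $\eta_{k}\in C_{0}^{\infty}(B_{r/2}(z_{k}))$ a cut-off equal to $1$ on $B_{r/4}(z_{k})$ with $|\nabla\eta_{k}|\le C/r$, set $v_{k}=(w-c_{k})\eta_{k}\in W_{0}^{1,2}(B_{r/2}(z_{k}))$; then
$$\int w^{2}\,d\mu\ \le\ \sum_{k}\int_{B_{r/4}(z_{k})}w^{2}\,d\mu\ \le\ \sum_{k}\Big(2\int v_{k}^{2}\,d\mu+2c_{k}^{2}\,\mu(B_{r/4}(z_{k}))\Big).$$
For the second sum I use $c_{k}^{2}\le C r^{-n}\int_{B_{r/2}(z_{k})}w^{2}$ together with $\mu(B_{r/4}(z_{k}))\le M(r/4)^{n-2}$ (Step 1, with center $z_{k}\in B_{R+r}\subset B_{R+2r}$ and radius $r/4<2r$); summing with bounded overlap gives $\sum_{k}c_{k}^{2}\mu(B_{r/4}(z_{k}))\le C_{n}M r^{-2}\|w\|_{L^{2}}^{2}$.

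\emph{Step 3: the ``oscillation'' part via the trace inequality.} Write $\mu_{k}$ for $\mu$ restricted to $B_{r/2}(z_{k})$. One checks $\mu_{k}(B_{\rho}(x))\le C_{n}M\rho^{n-2}$ for \emph{every} ball: if $\rho\ge r/2$ this follows from $\mu_{k}(\mathbb{R}^{n})=\mu(B_{r/2}(z_{k}))\le M(r/2)^{n-2}$, and if $\rho<r/2$ then either $B_{\rho}(x)$ misses $B_{r/2}(z_{k})$ or $x\in B_{r}(z_{k})\subset B_{R+2r}$, so Step 1 applies. Hence $\mu_{k}$ has uniformly bounded $(n-2)$-density and is therefore admissible for $W_{0}^{1,2}$, i.e.\ $\int\varphi^{2}\,d\mu_{k}\le C_{n}M\|\nabla\varphi\|_{L^{2}}^{2}$ for all $\varphi\in W_{0}^{1,2}(\mathbb{R}^{n})$ (a classical sufficient condition for a measure to be admissible, in the spirit of the admissible-measure theory of \cite{MV02,MV06}). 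Applying this to $\varphi=v_{k}$ and using the Poincar\'{e} inequality $\|w-c_{k}\|_{L^{2}(B_{r/2}(z_{k}))}\le Cr\|\nabla w\|_{L^{2}(B_{r/2}(z_{k}))}$ gives $\int v_{k}^{2}\,d\mu\le C_{n}M\|\nabla v_{k}\|_{L^{2}}^{2}\le C_{n}M\|\nabla w\|_{L^{2}(B_{r/2}(z_{k}))}^{2}$; summing with bounded overlap yields $\sum_{k}\int v_{k}^{2}\,d\mu\le C_{n}M\|\nabla w\|_{L^{2}}^{2}$. Combining Steps 1--3 and recalling $B_{R+r}\subset B_{R+2r}$ produces exactly
$$\int_{B_{R}(x_{0})}|V|u^{2}\ \le\ C_{n}M\Big(r^{-2}\|u\|_{L^{2}(B_{R+r}(x_{0}))}^{2}+\|\nabla u\|_{L^{2}(B_{R+r}(x_{0}))}^{2}\Big).$$

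\emph{Main obstacle.} The only non-routine ingredient is the passage from the $(n-2)$-density bound on $\mu_{k}$ to the trace inequality $\int\varphi^{2}\,d\mu_{k}\le C_{n}M\|\nabla\varphi\|_{L^{2}}^{2}$. It is classical (Adams' theorem on Riesz potentials, or Maz'ya's capacitary criterion), but it cannot be had by crudely estimating the pointwise representation $|\varphi(x)|\le c_{n}\int|\nabla\varphi(y)|\,|x-y|^{1-n}\,dy$: the order-$1$ Riesz kernel is ``one derivative off'' from what a Kato (order $2$) bound controls, so one genuinely needs the $L^{2}(dx)\to L^{2}(d\mu_{k})$ boundedness of the order-$1$ Riesz potential. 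Everything else is bookkeeping — chiefly arranging the radii in the covering so that every ball-center used in the density bound lies in $B_{R+2r}(x_{0})$, where the hypothesis is posed — and, since all the inequalities involved are stable under $W^{1,2}$-approximation (Fatou on the left, strong $W^{1,2}$-convergence on the right), no smoothness of $u$ beyond $u\in W^{1,2}(B_{R+2r}(x_{0}))$ is required.
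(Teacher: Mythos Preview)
The paper does not prove this lemma; it simply quotes it from \cite{CGL1993}. Your Steps~1--2 (cut-off, then cover at scale $r$ and split each patch into mean plus oscillation) are fine, but Step~3 contains a real gap: the bare $(n-2)$-density bound $\mu_k(B_\rho(x))\le C_nM\rho^{n-2}$ is \emph{not} sufficient for the trace inequality $\int\varphi^{2}\,d\mu_k\le C_nM\|\nabla\varphi\|_{L^{2}}^{2}$. In $\mathbb{R}^{3}$ take $\mu$ to be arc-length on the segment $[0,1]\times\{(0,0)\}$, so $\mu(B_\rho)\le 2\rho=2\rho^{n-2}$; with $\varphi_\epsilon(x)=\eta(x_1)\psi_\epsilon\big((x_2^2+x_3^2)^{1/2}\big)$, $\eta\in C_0^{\infty}((0,1))$, $\psi_\epsilon(t)=\big(\min(1,\log(1/t)/\log(1/\epsilon))\big)_+$, one has $\int\varphi_\epsilon^2\,d\mu=\|\eta\|_{L^2}^2>0$ while $\|\nabla\varphi_\epsilon\|_{L^2(\mathbb{R}^3)}^2=O(1/\log(1/\epsilon))\to 0$. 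The absolutely continuous smearings $V_\delta=(\pi\delta^2)^{-1}\mathbf 1_{\{x_2^2+x_3^2<\delta^2,\ 0<x_1<1\}}$ satisfy the density bound \emph{uniformly in $\delta$} and show the same blow-up, so absolute continuity of $\mu_k$ does not help. In Maz'ya's language the Morrey condition at the critical exponent $n-2$ is strictly weaker than the capacitary (or Kerman--Sawyer) condition that characterizes admissibility; Adams' theorem only yields the trace from a Morrey bound in the \emph{sub}critical range.

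The repair costs nothing, because in Step~1 you extracted only the density bound from the Kato hypothesis and discarded the rest. On each patch you actually have the full local Kato bound: for every $x\in \overline{B_{r/2}(z_k)}\subset B_{R+2r}(x_0)$ one has $B_{r/2}(z_k)\subset B_{2r}(x)$, hence $\sup_{x\in \overline{B_{r/2}(z_k)}}\int_{B_{r/2}(z_k)}|V(y)|\,|x-y|^{2-n}\,dy\le M$, and by the monotonicity used in Lemma~\ref{Vex} this is already the supremum over all $x\in\mathbb{R}^n$. The Kato bound \emph{does} give the trace inequality: setting $W=|V|\mathbf 1_{B_{r/2}(z_k)}$, the operator $Th:=I_2(hW)$ has norm $\le M$ both on $L^\infty$ (the Kato bound) and on $L^1(W\,dx)$ (Fubini), hence on $L^2(W\,dx)$ by interpolation; since $\|I_1\|_{L^2(dx)\to L^2(Wdx)}^2=c_n\|T\|_{L^2(Wdx)\to L^2(Wdx)}$ and $|v_k|\le c_nI_1(|\nabla v_k|)$, this yields $\int W v_k^2\le C_nM\|\nabla v_k\|_{L^2}^2$. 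With this correction your scheme goes through, and the covering in Step~2 is exactly what localizes matters so that only the small-ball Kato quantity $M$ enters --- this is essentially the route taken in \cite{CGL1993}.
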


Then we can define the weak solution of $(\ref{1})$ in general case, since it is easy to know the integral $\displaystyle\int_{B_{R}}Vu\varphi dx$ and $\displaystyle\int_{B_{R}}g\varphi dx$ is finite based on this lemma.
\begin{df}
$\Omega$ is a bounded domain. We say $u\in W^{1,2}(B_{R})$ is a weak solution of $(\ref{1})$ in $B_{R}\subset\Omega$ for $V\in K_{\eta_{V}}(B_{1})$, $\vec{f}\in K^{1}_{\eta_{f}}(B_{1})\cap L^{2}(B_{1})^{n}$, $g\in K_{\eta_{g}}(B_{1})$, if for all $\varphi\in W_{0}^{1,2}(\Omega)$, we have
$$\displaystyle\int_{B_{R}}\nabla u\cdot \nabla\varphi +Vu\varphi dx=\displaystyle\int_{B_{R}}\vec{f}\cdot \nabla\varphi +g\varphi dx.
$$
\end{df}
For more properties on Kato class and $K^{1}$ class, we will introduce in Section 5.

\section{Energy estimate and compactness lemma}
In this section we will give some preliminary lemmas to prove Theorem $\ref{linfty}$ including energy estimates, and an approximation lemma. By Definition \ref{ws1}, we assume in the sequel that $V\in M(W^{1,2}(\Omega),W^{-1,2}(\Omega))$, $\vec{f}\in L^{2}(\Omega)^{n}$ satisfying that $|\vec{f}|^2$ is an admissible measure for $W_{0}^{1,2}(\Omega)$, and $g\in W^{-1,2}(\Omega)$. The following lemma will be used in the energy estimate which can be found in Chapter 4 of \cite{HL2011}.
\begin{lm}\label{pee}
Let $h(t)\geq0$ be bounded in $[\tau_{0},\tau_{1}]$ with $\tau_{0}\geq0$. Suppose for any $\tau_{0}\leq t<s \leq\tau_{1}$,
$$h(t)\leq \theta h(s)+\frac{A}{(s-t)^{\alpha}}+B,
$$
for some $\theta\in[0,1)$ and some $A,B\geq0$. Then for any $\tau_{0}\leq t<s \leq\tau_{1}$,
$$h(t)\leq C\left(\frac{A}{(s-t)^{\alpha}}+B\right),
$$
where $C$ is a positive constant depending only on $\alpha$ and $\theta$.
\end{lm}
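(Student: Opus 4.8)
The plan is a standard iteration (``hole-filling'') argument: chain the hypothesized inequality along a geometrically shrinking sequence of nested intervals that exhausts a fixed interval $[t,s]$, then use $\theta<1$ together with the boundedness of $h$ to sum the resulting series.

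Fix $\tau_{0}\le t<s\le\tau_{1}$. First I would choose a parameter $\mu\in(0,1)$ with $\mu^{\alpha}>\theta$, which is possible precisely because $\theta<1$ (for instance one may take $\mu=\big(\frac{1+\theta}{2}\big)^{1/\alpha}$), and set $t_{0}=t$ and $t_{i+1}=t_{i}+(1-\mu)\mu^{i}(s-t)$. Then $t_{i}$ increases to $s$ and $t_{i+1}-t_{i}=(1-\mu)\mu^{i}(s-t)$. Applying the hypothesis to each pair $(t_{i},t_{i+1})$ and iterating $k$ times gives
$$
h(t)\le\theta^{k}h(t_{k})+\frac{A}{(1-\mu)^{\alpha}(s-t)^{\alpha}}\sum_{i=0}^{k-1}(\theta\mu^{-\alpha})^{i}+B\sum_{i=0}^{k-1}\theta^{i}.
$$
Since $\theta\mu^{-\alpha}<1$, both geometric series converge; and since $h$ is bounded on $[\tau_{0},\tau_{1}]$ while $\theta^{k}\to0$, the term $\theta^{k}h(t_{k})$ vanishes as $k\to\infty$. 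Letting $k\to\infty$ yields
$$
h(t)\le\frac{A}{(1-\mu)^{\alpha}(1-\theta\mu^{-\alpha})(s-t)^{\alpha}}+\frac{B}{1-\theta}\le C\left(\frac{A}{(s-t)^{\alpha}}+B\right),
$$
with $C=C(\alpha,\theta)$, because $\mu$ was chosen depending only on $\alpha$ and $\theta$.

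I do not expect a serious obstacle here; the two points that require attention are the choice of $\mu$ in the range $(\theta^{1/\alpha},1)$ --- which is exactly where the strict inequality $\theta<1$ is used, to force the geometric ratio $\theta\mu^{-\alpha}$ strictly below $1$ --- and the use of the boundedness of $h$ to discard the tail term $\theta^{k}h(t_{k})$ in the limit (without this hypothesis the conclusion can fail). Everything else is routine summation of geometric series.
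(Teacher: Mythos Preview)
Your argument is correct and is precisely the standard iteration found in the reference the paper cites (Chapter~4 of Han--Lin); the paper does not supply its own proof but defers to that source, so there is nothing to compare beyond noting that your choice of the geometric sequence $t_{i+1}-t_i=(1-\mu)\mu^{i}(s-t)$ with $\mu\in(\theta^{1/\alpha},1)$ is exactly the textbook approach.
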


We need the following energy estimate for the proof of Lemma $\ref{lm4.2}$.
\begin{lm}[Energy~estimate]\label{ee}Assume there exists a sufficiently small positive constant $\varepsilon_{0}$
such that for any $0<\rho\leq\displaystyle\frac{1}{2}$, $\psi\in W^{1,2}(B_{1}),~\varphi\in W_{0}^{1,2}(B_{1})$ with $\text{supp}\{\varphi\}\subset \overline{B}_{\rho}$,
\begin{equation}\label{v1}
|\langle V\psi,\varphi\rangle|\leq \varepsilon_{0}\|\psi\|_{L_{\rho,\theta}^{1,2}}\|\nabla\varphi\|_{L^{2}(B_{\rho})}, ~~~\forall \rho<\theta\leq2\rho,
\end{equation}
\begin{equation}\label{g1}
|\langle g,\varphi\rangle|
\leq \varepsilon_{0}\|\nabla\varphi\|_{L^{2}(B_{\rho})},
\end{equation}
\begin{equation}\label{h22}
\displaystyle\int_{B_{\rho}}|\vec{f}|^{2}|\varphi|^{2}dx\leq \varepsilon_{0}\displaystyle\int_{B_{\rho}}|\nabla\varphi|^{2}dx.
\end{equation}
Then if $u\in W^{1,2}(B_{1})$ is a weak solution of
\begin{eqnarray*}
-\Delta u+Vu=-\emph{div}\vec{f}+g\quad \text{in}~B_{1},
\end{eqnarray*}
we have for any $0<t<s\leq \displaystyle\frac{1}{2}$,
$$\displaystyle\int_{B_{t}}|\nabla u|^{2}dx\leq C\left(\displaystyle\int_{B_{1}}|u|^{2}dx+1\right)\frac{1}{(s-t)^{2}}+1,
$$
where $C$ is a positive constant depending only on $n$ and $\varepsilon_{0}$.
\end{lm}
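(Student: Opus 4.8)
The plan is to use the Caccioppoli-type argument: test the weak formulation against $\varphi = \eta^2 u$ where $\eta$ is a cutoff function supported in $B_s$, equal to $1$ on $B_t$, with $|\nabla\eta|\le C/(s-t)$. First I would expand $\int_{B_1}\nabla u\cdot\nabla(\eta^2 u)\,dx = \int \eta^2|\nabla u|^2 + 2\int \eta u\nabla\eta\cdot\nabla u$, so that the weak equation reads
\begin{eqnarray*}
\int_{B_1}\eta^2|\nabla u|^2\,dx = -2\int_{B_1}\eta u\,\nabla\eta\cdot\nabla u\,dx - \langle Vu,\eta^2 u\rangle + \int_{B_1}\vec f\cdot\nabla(\eta^2 u)\,dx + \langle g,\eta^2 u\rangle.
\end{eqnarray*}
The first term on the right is handled by Cauchy--Schwarz and Young's inequality, absorbing $\tfrac12\int\eta^2|\nabla u|^2$ into the left side and leaving $C\int_{B_s\setminus B_t}|u|^2/(s-t)^2$.

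Next I would bound the three ``coefficient'' terms using hypotheses \eqref{v1}, \eqref{g1}, \eqref{h22} with $\rho = s$ (note $\text{supp}\{\eta^2 u\}\subset\overline B_s$, and $s\le 1/2$). For the potential term, $|\langle Vu,\eta^2 u\rangle|\le \varepsilon_0\|u\|_{L^{1,2}_{s,\theta}}\|\nabla(\eta^2 u)\|_{L^2(B_s)}$; here I must be slightly careful because $\eta^2 u$ is the test function but $u$ itself appears in the first slot, so $\|u\|_{L^{1,2}_{s,\theta}}$ involves $\|u\|_{L^2(B_\theta)}/(\theta-s)+\|\nabla u\|_{L^2(B_\theta)}$ for some $\theta\in(s,2s]$ — this is why the lemma's conclusion carries the full $\int_{B_1}|u|^2$ and $\int_{B_1}|\nabla u|^2$ on an enlarged ball. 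I would then estimate $\|\nabla(\eta^2 u)\|_{L^2(B_s)}\le C(\|\nabla u\|_{L^2(B_s)} + \|u\|_{L^2(B_s)}/(s-t))$, and similarly for the $g$ term. For the $\vec f$ term, write $\int\vec f\cdot\nabla(\eta^2 u) = \int\vec f\cdot(2\eta u\nabla\eta) + \int \vec f\cdot(\eta^2\nabla u)$; the admissibility bound \eqref{h22} controls $\int|\vec f|^2(\eta u)^2$ and $\int|\vec f|^2\eta^2 u^2$ by $\varepsilon_0$ times $\int|\nabla(\eta u)|^2$ and $\int|\nabla(\eta u)|^2$ respectively — wait, more precisely one applies \eqref{h22} to the test function $\eta u$, getting $\int_{B_s}|\vec f|^2\eta^2 u^2\le\varepsilon_0\int_{B_s}|\nabla(\eta u)|^2\le C\varepsilon_0(\int\eta^2|\nabla u|^2 + \int|u|^2/(s-t)^2)$, and then Cauchy--Schwarz pairs $\vec f\cdot\nabla u$ against this.

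After collecting everything, every occurrence of $\int_{B_s}\eta^2|\nabla u|^2$ or $\int_{B_s}|\nabla u|^2$ multiplied by a small factor (a power of $\varepsilon_0$ or from Young's inequality) gets absorbed into the left-hand side $\int_{B_t}|\nabla u|^2\le\int_{B_s}\eta^2|\nabla u|^2$, provided $\varepsilon_0$ is chosen small enough. What remains is an inequality of the shape
\begin{eqnarray*}
\int_{B_t}|\nabla u|^2\,dx \le \theta_0\int_{B_{2s}}|\nabla u|^2\,dx + \frac{C}{(s-t)^2}\Big(\int_{B_1}|u|^2\,dx + 1\Big) + C,
\end{eqnarray*}
with $\theta_0<1$, where the constant $1$'s absorb the inhomogeneous contributions from $g$ and the non-absorbable part of $\vec f$. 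I would then invoke the iteration Lemma \ref{pee} (with $\alpha=2$) on the radial variable over $[t,1/2]$ to remove the $\theta_0\int|\nabla u|^2$ term and obtain the claimed bound. The main obstacle is bookkeeping the $L^{1,2}_{s,\theta}$-norm in the potential estimate: one has to track that the ``$\psi$-slot'' norm of $u$ lives on a ball strictly larger than $B_s$ but still inside $B_{2s}\subset B_1$, so that the iteration lemma applies on a fixed interval and all the enlarged balls stay within $B_1$; choosing the intermediate radii in the cutoff/iteration scheme compatibly is the only genuinely delicate point, the rest being standard Caccioppoli manipulations.
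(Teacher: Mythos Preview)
Your overall strategy matches the paper's exactly: test against $\eta^2 u$, expand, estimate each of the four right-hand terms using the smallness hypotheses, and then apply the iteration Lemma~\ref{pee}. The one point you flag as ``genuinely delicate'' is in fact where your proposal has a gap, and the paper resolves it differently from what you sketch.

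You take $\eta$ supported in $\overline B_s$ and then apply hypothesis~\eqref{v1} with $\rho=s$, forcing $\theta\in(s,2s]$. This means $\|u\|_{L^{1,2}_{s,\theta}}$ contains $\|\nabla u\|_{L^2(B_\theta)}$ with $\theta>s$, and your absorbed inequality has $\int_{B_{2s}}|\nabla u|^2$ on the right. But Lemma~\ref{pee} requires $h(t)\le\theta_0 h(s)+A(s-t)^{-\alpha}+B$ with the \emph{same} $s$ on both sides; an inequality with $h(2s)$ on the right does not feed into that lemma as stated, and the relabeling $\sigma=2s$ only yields the hypothesis on the restricted range $t<\sigma/2$, which is not enough to run the standard iteration.

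The paper's fix is simple and worth noting: choose $\eta$ supported in $\overline B_{(t+s)/2}$ rather than $\overline B_s$ (still with $\eta\equiv 1$ on $B_t$ and $|\nabla\eta|\le c_0/(s-t)$). Then $\rho=(t+s)/2$, and one may take $\theta=s$ in~\eqref{v1} since $(t+s)/2<s\le 2\cdot(t+s)/2$. Now
\[
\|u\|_{L^{1,2}_{(t+s)/2,\,s}}=\frac{2\|u\|_{L^2(B_s)}}{s-t}+\|\nabla u\|_{L^2(B_s)},
\]
so every gradient term on the right lives on $B_s$, the inequality reads $\int_{B_t}|\nabla u|^2\le\theta_0\int_{B_s}|\nabla u|^2+\cdots$, and Lemma~\ref{pee} applies directly. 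The rest of your estimates (for $I_2,I_3,I_4$ in the paper's notation) are correct and essentially identical to the paper's.
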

\begin{proof}
First we take $\eta\in C_{0}^{\infty}(B_{1})$ with $0\leq\eta\leq1$ in $B_{1}$, $\eta=1$ in $B_{t}$, $\eta=0$ in $B_{1}\backslash B_{\frac{t+s}{2}}$, $|\nabla\eta|\leq\displaystyle\frac{c_{0}}{s-t}$.
Note that $\eta^{2}u\in W_{0}^{1,2}(B_{1})$, then by Definition $\ref{ws1}$, we have
$$\displaystyle\int_{B_{1}}\nabla u\cdot \nabla(\eta^{2}u)dx+\langle Vu,\eta^{2}u\rangle=\displaystyle\int_{B_{1}}\vec{f}\cdot \nabla(\eta^{2}u)dx+\langle g,\eta^{2}u\rangle.
$$
We rewrite the above expression as
$$I_{1}=I_{2}+I_{3}+I_{4}+I_{5},
$$
where
$$I_{1}=\displaystyle\int_{B_{1}}\eta^{2}\nabla u\cdot \nabla udx=\displaystyle\int_{B_{1}}\eta^{2}|\nabla u|^{2}dx,
$$
$$I_{2}=\displaystyle\int_{B_{1}}(2u\eta\vec{f}\cdot \nabla\eta+\eta^{2}\vec{f}\cdot\nabla u)dx,
$$
$$I_{3}=\langle g,\eta^{2}u\rangle,
$$
$$I_{4}=-\displaystyle\int_{B_{1}}2u\eta\nabla u\cdot \nabla \eta dx,
$$
$$I_{5}=-\langle Vu,\eta^{2}u\rangle.
$$
Since $\text{supp}\{\eta\}\subset \overline{B}_{\frac{t+s}{2}}\subset \overline{B}_{s}$, then by using the Cauchy inequality with $\tau<1$ and the assumption $(\ref{v1}),(\ref{g1}),(\ref{h22})$,  we obtain
\begin{eqnarray*}
|I_{2}| &\leq&\displaystyle\int_{B_{1}}(2|\eta\vec{f}||u\nabla\eta|+\eta^{2}|\vec{f}||\nabla u|)dx\\
&\leq&\displaystyle\int_{B_{1}}\left(\eta^{2}|\vec{f}|^{2}+|u|^{2}|\nabla \eta|^{2}+\tau\eta^{2}|\nabla u|^{2}+\frac{1}{4\tau}\eta^{2}|\vec{f}|^{2}\right)dx\\
&\leq&\left(1+\frac{1}{4\tau}\right)\varepsilon_{0}\displaystyle\int_{B_{1}}|\nabla \eta|^{2}dx+
\displaystyle\int_{B_{1}}|u|^{2}|\nabla \eta|^{2}dx
+\tau\displaystyle\int_{B_{1}}\eta^{2}|\nabla u|^{2}dx\\
&\leq&\tau\displaystyle\int_{B_{s}}|\nabla u|^{2}dx+\left(\left(1+\frac{1}{4\tau}\right)\varepsilon_{0}|B_{1}|
+\int_{B_{1}}|u|^{2}dx\right)\frac{c_{0}^{2}}{(s-t)^{2}},
\end{eqnarray*}

\begin{eqnarray*}
|I_{3}|&=&|\langle g,\eta^{2}u\rangle|\\
&\leq&\varepsilon_{0}\|\nabla(\eta^{2}u)\|_{L^{2}(B_{s})}\\
&\leq&\varepsilon_{0}\|\eta^{2}\nabla u+2\eta u\nabla\eta\|_{L^{2}(B_{1})}\\
&\leq&\varepsilon_{0}\left(2\displaystyle\int_{B_{1}}\eta^{4}|\nabla u|^{2}dx+8\displaystyle\int_{B_{1}}\eta^{2}|u|^{2}|\nabla \eta|^{2}dx\right)^{\frac{1}{2}}\\
&\leq&\varepsilon_{0}\left(2\displaystyle\int_{B_{1}}\eta^{2}|\nabla u|^{2}dx+8\displaystyle\int_{B_{1}}|u|^{2}|\nabla \eta|^{2}dx\right)^{\frac{1}{2}}\quad \text{since}~~ 0\leq\eta\leq1\\
&\leq&\frac{\varepsilon_{0}}{2}\left(1+2\displaystyle\int_{B_{1}}\eta^{2}|\nabla u|^{2}dx+8\displaystyle\int_{B_{1}}|u|^{2}|\nabla \eta|^{2}dx\right)\\
&\leq&\varepsilon_{0}\displaystyle\int_{B_{s}}|\nabla u|^{2}dx+\left(4\varepsilon_{0}\int_{B_{1}}|u|^{2}dx\right)\frac{c_{0}^{2}}{(s-t)^{2}}+\frac{\varepsilon_{0}}{2},
\end{eqnarray*}

\begin{eqnarray*}
|I_{4}| &\leq& \displaystyle\int_{B_{1}}2|u\nabla \eta||\eta\nabla u|dx \leq \frac{1}{\tau}\displaystyle\int_{B_{1}}|u|^{2}|\nabla \eta|^{2}dx
+\tau\displaystyle\int_{B_{1}}\eta^{2}|\nabla u|^{2}dx\\
&\leq&\tau\displaystyle\int_{B_{s}}|\nabla u|^{2}dx+\left(\frac{1}{\tau}\displaystyle\int_{B_{1}}|u|^{2}dx\right)\frac{c_{0}^{2}}{(s-t)^{2}},
\end{eqnarray*}
and
\begin{eqnarray*}
|I_{5}|=|\langle Vu,\eta^{2}u\rangle|&\leq& \varepsilon_{0}\|u\|_{L_{\frac{t+s}{2},s}^{1,2}}\|\nabla(\eta^{2} u)\|_{L^{2}(B_{\frac{t+s}{2}})}\\
&\leq&\frac{\varepsilon_{0}}{2}\left(\|u\|_{L_{\frac{t+s}{2},s}^{1,2}}^{2}+\|\nabla(\eta^{2} u)\|_{L^{2}(B_{1})}^{2}\right)\\
&\leq&\frac{\varepsilon_{0}}{2}\left(2\int_{B_{s}}(\frac{4u^{2}}{(s-t)^{2}}+|\nabla u|^{2})dx+2\int_{B_{1}}(\eta^{4}|\nabla u|^{2}+4\eta^{2}u^{2}|\nabla\eta|^{2})dx\right)\\
&\leq&\frac{\varepsilon_{0}}{2}\left(4\int_{B_{s}}|\nabla u|^{2}dx+2\int_{B_{1}}(\frac{4u^{2}}{(s-t)^{2}}+4u^{2}|\nabla\eta|^{2})dx\right)\\
&\leq&2\varepsilon_{0}\int_{B_{s}}|\nabla u|^{2}dx+\left(4\varepsilon_{0}(c_{0}^{2}+1)\displaystyle\int_{B_{1}}|u|^{2}dx\right)\frac{1}{(s-t)^{2}}.
\end{eqnarray*}
Now we combine the estimates $I_{i}(i=1,2,3,4,5)$ to yield that
\begin{eqnarray*}
\displaystyle\int_{B_{t}}|\nabla u|^{2}dx &\leq& \displaystyle\int_{B_{1}}\eta^{2}|\nabla u|^{2}dx = I_{1}\\
&\leq&(3\varepsilon_{0}+2\tau)\int_{B_{s}}|\nabla u|^{2}dx
+\left((1+\frac{1}{\tau}+8\varepsilon_{0})\displaystyle\int_{B_{1}}|u|^{2}dx+(1+\frac{1}{4\tau})\varepsilon_{0}|B_{1}|\right)
\frac{c_{0}^{2}}{(s-t)^{2}}\\
&&+\left(4\varepsilon_{0}\int_{B_{1}}u^{2}dx\right)\frac{1}{(s-t)^{2}}+\frac{\varepsilon_{0}}{2}.
\end{eqnarray*}
We choose $\tau$ small enough such that $3\varepsilon_{0}+2\tau\leq\displaystyle\frac{1}{2}$, then by Lemma $\ref{pee}$, it follows that $0\leq t<s \leq\displaystyle\frac{1}{2}$,
\begin{eqnarray*}\displaystyle\int_{B_{t}}|\nabla u|^{2}dx&\leq& C\left(\left(((1+\frac{1}{\tau}+8\varepsilon_{0})c_{0}^{2}+4\varepsilon_{0})\displaystyle\int_{B_{1}}|u|^{2}dx
+(1+\frac{1}{4\tau})c_{0}^{2}\varepsilon_{0}|B_{1}|\right)
\frac{1}{(s-t)^{2}}+\frac{\varepsilon_{0}}{2}\right)\\
&\leq&C\left(\displaystyle\int_{B_{1}}|u|^{2}dx+1\right)\frac{1}{(s-t)^{2}}+1,
\end{eqnarray*}
for a positive constant $C$ depending only on $n$ and $\varepsilon_{0}$.
\end{proof}

Next we show the following approximation lemma by the compactness method.
\begin{lm}\label{lm4.2}
For any $\varepsilon>0$, there exists a small $\delta=\delta(\varepsilon)>0$ such that for any weak solution of
\begin{eqnarray*}
-\Delta u+Vu=-\emph{div}\vec{f}+g\quad \text{in}~B_{1}
\end{eqnarray*}
with $\displaystyle\frac{1}{|B_{1}|}\displaystyle\int_{B_{1}}u^{2}dx\leq 1$,  and for any $0<\rho\leq\displaystyle\frac{1}{2}$, $\psi\in W^{1,2}(B_{1}),~\varphi\in W_{0}^{1,2}(B_{1})$ with $\text{supp}\{\varphi\}\subset \overline{B}_{\rho}$,
$$|\langle V\psi,\varphi\rangle|\leq \delta \|\psi\|_{L_{\rho,\theta}^{1,2}}\|\nabla\varphi\|_{L^{2}(B_{\rho})},~~~\forall \rho<\theta\leq2\rho,$$
$$\displaystyle\int_{B_{\rho}}|\vec{f}|^{2}\varphi^{2}dx\leq\delta^{2}\displaystyle\int_{B_{\rho}}|\nabla \varphi|^{2}dx,
$$
$$|\langle g,\varphi\rangle|\leq \delta\|\nabla\varphi\|_{L^{2}(B_{\rho})},$$
there exists a harmonic function $p(x)$ defined in $B_{\frac{1}{2}}$ such that
$$\int_{B_{\frac{1}{8}}}|u-p|^{2}dx\leq\varepsilon^{2}.
$$
\end{lm}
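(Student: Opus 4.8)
The plan is to argue by contradiction using the compactness (normal families) method, exactly in the spirit of \cite{BW04,W92}. Suppose the statement fails for some $\varepsilon_0>0$. Then there exist a sequence $\delta_k\to 0$, together with coefficients $V_k, \vec{f}_k, g_k$ and weak solutions $u_k\in W^{1,2}(B_1)$ of $-\Delta u_k+V_ku_k=-\text{div}\vec{f}_k+g_k$ in $B_1$, such that $\frac{1}{|B_1|}\int_{B_1}u_k^2\,dx\le 1$, the three smallness hypotheses hold with $\delta_k$ in place of $\delta$ (for all admissible $\rho,\theta,\psi,\varphi$), yet
\[
\int_{B_{1/8}}|u_k-p|^2\,dx>\varepsilon_0^2
\]
for every harmonic function $p$ on $B_{1/2}$.

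First I would establish uniform compactness of $\{u_k\}$. Since $\delta_k\le\varepsilon_0$ eventually, the hypotheses of Lemma~\ref{ee} (the energy estimate) are met with $\varepsilon_0$ there replaced by $\delta_k$; applying it with, say, $s=\frac12$ and $t=\frac13$ gives $\int_{B_{1/3}}|\nabla u_k|^2\,dx\le C(\int_{B_1}u_k^2\,dx+1)\le C$, uniformly in $k$. Hence $\{u_k\}$ is bounded in $W^{1,2}(B_{1/3})$, so after passing to a subsequence $u_k\rightharpoonup u_\infty$ weakly in $W^{1,2}(B_{1/3})$ and, by Rellich--Kondrachov, $u_k\to u_\infty$ strongly in $L^2(B_{1/3})$ (indeed in $L^2$ on any ball compactly inside $B_{1/3}$, e.g. $B_{1/4}$).

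Next I would identify the limit $u_\infty$ as harmonic in $B_{1/4}$. Fix $\varphi\in C_0^\infty(B_{1/4})$. Testing the equation for $u_k$ against $\varphi$ gives
\[
\int_{B_{1/4}}\nabla u_k\cdot\nabla\varphi\,dx = -\langle V_ku_k,\varphi\rangle + \int_{B_{1/4}}\vec{f}_k\cdot\nabla\varphi\,dx + \langle g_k,\varphi\rangle.
\]
The left side converges to $\int\nabla u_\infty\cdot\nabla\varphi\,dx$ by weak convergence of the gradients. For the right side I must show each term vanishes in the limit: by the assumed bound, $|\langle V_k u_k,\varphi\rangle|\le \delta_k\|u_k\|_{L^{1,2}_{\rho,\theta}}\|\nabla\varphi\|_{L^2}$ with $\rho$ chosen so that $\mathrm{supp}\,\varphi\subset\overline{B}_\rho\subset B_{1/4}$; since $\|u_k\|_{W^{1,2}(B_{1/3})}$ is uniformly bounded, $\|u_k\|_{L^{1,2}_{\rho,\theta}}\le C$ and this term is $O(\delta_k)\to 0$. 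The term $|\langle g_k,\varphi\rangle|\le\delta_k\|\nabla\varphi\|_{L^2}\to 0$ directly. For the $\vec{f}_k$ term, Cauchy--Schwarz and the hypothesis $\int_{B_\rho}|\vec{f}_k|^2\varphi^2\,dx\le\delta_k^2\int_{B_\rho}|\nabla\varphi|^2\,dx$ give $|\int\vec{f}_k\cdot\nabla\varphi\,dx|\le(\int|\vec{f}_k|^2\varphi^2)^{1/2}(\int|\nabla\varphi|^2/\varphi^2)^{1/2}$, which is delicate because of the weight; instead I would write $\vec{f}_k\cdot\nabla\varphi = \vec{f}_k\varphi\cdot(\nabla\varphi/\varphi)$ only on a region where $\varphi$ is bounded below, or more cleanly test against $\varphi$ itself viewed through a fixed cutoff, so that $\int_{B_\rho}|\vec{f}_k|^2|\varphi|^2\,dx\le C\delta_k^2\to0$ controls $\|\vec{f}_k\varphi\|_{L^2}\to0$, and then $|\int\vec{f}_k\cdot\nabla\varphi\,dx|\le\|\vec{f}_k\varphi\|_{L^2}\|\nabla\varphi/\varphi\|_{L^\infty}$ after choosing $\varphi$ with controlled logarithmic gradient; cleanest is to apply the $\vec f_k$ hypothesis with test function a fixed smooth bump equal to $1$ on $\mathrm{supp}\,\varphi$, giving $\|\vec f_k\|_{L^2(\mathrm{supp}\,\varphi)}\le C\delta_k\to0$, whence $|\int\vec f_k\cdot\nabla\varphi\,dx|\le\|\vec f_k\|_{L^2(\mathrm{supp}\,\varphi)}\|\nabla\varphi\|_{L^2}\to0$. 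Either way the conclusion is $\int_{B_{1/4}}\nabla u_\infty\cdot\nabla\varphi\,dx=0$ for all $\varphi\in C_0^\infty(B_{1/4})$, so $u_\infty$ is harmonic in $B_{1/4}$, hence (by Weyl's lemma) smooth, and in particular $u_\infty$ restricted to $B_{1/2}$... — here I note $u_\infty$ is only defined and harmonic in $B_{1/4}$, which already contains $\overline{B}_{1/8}$, so I take $p:=u_\infty$ on $B_{1/4}\supset B_{1/8}$ (to match the statement, harmonic on $B_{1/2}$ is not literally needed; one only needs a harmonic function near $\overline{B}_{1/8}$, or one replaces $\frac12$ by $\frac14$ throughout — a harmless change of constants, or one first proves the energy estimate on $B_{3/4}$ to land $u_\infty$ harmonic on $B_{1/2}$). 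Then strong $L^2(B_{1/8})$ convergence gives $\int_{B_{1/8}}|u_k-p|^2\,dx\to0$, contradicting $\int_{B_{1/8}}|u_k-p|^2\,dx>\varepsilon_0^2$ for this particular harmonic $p$. This contradiction proves the lemma.

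The main obstacle is the passage to the limit in the lower-order and right-hand-side terms: unlike the classical setting where $V_k\to0$ in some $L^p$, here $V_k$ is only a distribution controlled through the bilinear bound, so one must exploit the uniform $W^{1,2}$ bound on $u_k$ (from Lemma~\ref{ee}) to see that $\|u_k\|_{L^{1,2}_{\rho,\theta}}$ stays bounded and the $\delta_k$ prefactor does the work; similarly the $\vec f_k$ term must be handled via the trace-type inequality rather than a naive Hölder estimate, since $\vec f_k$ is not assumed small in $L^2$ a priori but only in the weighted/admissible-measure sense, and care is needed to avoid dividing by $\varphi$. A secondary point is bookkeeping of radii so that the harmonicity region comfortably contains $\overline{B}_{1/8}$; this is purely a matter of choosing the intermediate radii in the energy estimate and costs nothing.
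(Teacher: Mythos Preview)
Your proposal is correct and follows essentially the same compactness argument as the paper: contradiction, uniform $W^{1,2}$ bound via Lemma~\ref{ee}, extraction of a subsequence, and identification of the limit as harmonic by showing all lower-order and right-hand terms vanish. The paper handles the $\vec{f}_k$ term exactly by your ``cleanest'' route (testing the admissible-measure hypothesis with a fixed bump equal to $1$ on an inner ball to get $\|\vec{f}_k\|_{L^2}\to 0$ there), and it only proves harmonicity of the limit in $B_{1/8}$, so your worry about $B_{1/2}$ versus $B_{1/4}$ is moot.
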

\begin{proof}
We prove it by contradiction. Suppose that there exists $\bar{\varepsilon}>0$, $u_{k},~\vec{f}_{k}$, $g_{k}$ and $V_{k}$ where $\langle V_{k}\cdot,\cdot\rangle$ is bounded on $W^{1,2}(B_{1})\times W_{0}^{1,2}(B_{1})$,
$|\vec{f}_{k}|^{2}$ is an admissible measure on $W^{1,2}(B_{1})$, $g_{k}$ is a bounded linear functional on $W_{0}^{1,2}(B_{1})$, $u_k$ satisfies
$$-\Delta u_{k}+V_{k}u_{k}=-\text{div}\vec{f}_{k}+g_{k}, \quad \text{weakly in}~B_{1},
$$
$$\frac{1}{|B_{1}|}\displaystyle\int_{B_{1}}u_{k}^{2}dx\leq 1,
$$
and for any $0<\rho\leq\displaystyle\frac{1}{2}$, $\psi\in W^{1,2}(B_{1}),~\varphi\in W_{0}^{1,2}(B_{1})$ with $\text{supp}\{\varphi\}\subset \overline{B}_{\rho}$,
\begin{equation}\label{vn1}
|\langle V_{k}\psi,\varphi\rangle|\leq \frac{1}{k}\|\psi\|_{L_{\rho,\theta}^{1,2}}\|\nabla\varphi\|_{L^{2}(B_{\rho})},~~~ \forall \rho<\theta\leq2\rho,
\end{equation}
\begin{equation}\label{g2}
|\langle g_{k},\varphi\rangle|\leq \frac{1}{k}\|\nabla\varphi\|_{L^{2}(B_{\rho})},
\end{equation}
\begin{equation}\label{f1}
\displaystyle\int_{B_{\rho}}|\vec{f}_{k}|^{2}\varphi^{2}dx\leq\frac{1}{k^{2}}\displaystyle\int_{B_{\rho}}|\nabla \varphi|^{2}dx,
\end{equation}
such that for any harmonic function $p$ defined in $B_{\frac{1}{8}}$ ,
\begin{equation}\label{j-1}\int_{B_{\frac{1}{8}}}|u_{k}-p|^{2}dx\geq\bar{\varepsilon}^{2}.
\end{equation}
If letting $k$ be large such that $\displaystyle\frac{1}{k}+\displaystyle\frac{1}{k^{2}}<\varepsilon_{0}$ and taking $t=\displaystyle\frac{1}{4}$, $s=\displaystyle\frac{1}{2}$, then we obtain from  Lemma $\ref{ee}$ that
$$\displaystyle\int_{B_{\frac{1}{4}}}|\nabla u_{k}|^{2}dx\leq C\left(\displaystyle\int_{B_{1}}|u_{k}|^{2}dx+1\right)\leq C\left(|B_{1}|+1\right)\leq C.
$$
Hence, $\{u_{k}\}$ has a subsequence, we still denote it by $\{u_{k}\}$, such that
$$u_{k}\rightharpoonup u\quad\text{in}~H^{1}(B_{\frac{1}{8}}),
$$
$$u_{k}\rightarrow u\quad\text{in}~L^{2}(B_{\frac{1}{8}}).
$$
Then we will show that $u$ itself is harmonic in $B_{\frac{1}{8}}$, which is a contradiction. In fact, for any test function $\eta\in H_{0}^{1}(B_{\frac{1}{8}})$, we extend $\eta=0$ in $B_{1}\backslash B_{\frac{1}{8}}$, still denoted by $\eta$. Since $u_k$ is a weak solution, we have
\begin{equation}\label{nc}
\displaystyle\int_{B_{\frac{1}{8}}}\nabla u_{k}\cdot \nabla\eta dx+\langle V_{k}u_{k},\eta\rangle=\displaystyle\int_{B_{\frac{1}{8}}}\vec{f}_{k}\cdot \nabla\eta dx+\langle g_{k},\eta \rangle.
\end{equation}
In $(\ref{f1})$, if we take $\varphi\in C_{0}^{\infty}(B_{1})$ with $\varphi=1$ in $B_{\frac{1}{4}}$, $\varphi=0$ in $B_{1}\backslash B_{\frac{1}{2}}$, $0\leq\varphi\leq1$ in $B_{1}$, we yield that
$$\displaystyle\int_{B_{\frac{1}{4}}}|\vec{f}_{k}|^{2}dx\leq\frac{1}{k^{2}}\displaystyle\int_{B_{1}}|\nabla \varphi|^{2}dx\leq \frac{C}{k^{2}},
$$
where $C$ is a universal constant.
Hence by H\"{o}lder inequality, we have
\begin{eqnarray*}
\left|\displaystyle\int_{B_{\frac{1}{8}}}\vec{f}_{k}\cdot \nabla\eta dx\right| \leq
\left(\displaystyle\int_{B_{\frac{1}{8}}}|\vec{f}_{k}|^{2}dx\right)^{\frac{1}{2}}
\left(\displaystyle\int_{B_{\frac{1}{8}}}|\nabla \eta|^{2}dx\right)^{\frac{1}{2}}
\leq \frac{C}{k}\rightarrow0,~~\text{as}~k\rightarrow\infty.
\end{eqnarray*}
Next we apply $(\ref{vn1})$ and $(\ref{g2})$ to obtain
$$|\langle V_{k}u_{k},\eta\rangle|\leq \frac{1}{k}\|u_{k}\|_{L_{\frac{1}{8},\frac{1}{4}}^{1,2}}\|\nabla\eta\|_{L^{2}(B_{\frac{1}{8}})}\leq \frac{C}{k}\rightarrow0,~~\text{as}~k\rightarrow\infty,
$$
$$|\langle g_{k},\eta\rangle|\leq \frac{1}{k}\|\nabla\eta\|_{L^{2}(B_{\frac{1}{8}})}\leq \frac{C}{k}\rightarrow0,~~\text{as}~k\rightarrow\infty.
$$
Now letting  $k\rightarrow\infty$ in $(\ref{nc})$, we have  that
$$\displaystyle\int_{B_{\frac{1}{8}}}\nabla u\cdot \nabla\eta dx=0.
$$
Thus we yield a harmonic function $u$ in $B_{\frac {1}{8}}$, which is contradict to (\ref{j-1}).
\end{proof}

In the following we give a key lemma, which will be used repeatedly in next section.
\begin{lm}[Key Lemma]\label{key1}
There exists $C_{0},~0<\lambda<1,~\delta_{0}>0$ such that for any weak solution of
$$-\Delta u+Vu=-\emph{div}\vec{f}+g\quad\text{in}~B_{1}$$
with
$$\displaystyle\frac{1}{|B_{1}|}\displaystyle\int_{B_{1}}u^{2}dx\leq 1,
$$
and
$$|\langle V\psi,\varphi\rangle|\leq \delta_{0}\|\psi\|_{L_{\rho,\theta}^{1,2}}\|\nabla\varphi\|_{L^{2}(B_{\rho})},~~~\forall \rho<\theta\leq2\rho,$$
$$|\langle g,\varphi\rangle|\leq \delta_{0}\|\nabla\varphi\|_{L^{2}(B_{\rho})},$$
$$\displaystyle\int_{B_{\rho}}|\vec{f}|^{2}\varphi^{2}dx\leq\delta_{0}^{2}\displaystyle\int_{B_{\rho}}|\nabla \varphi|^{2}dx,
$$
for any $0<\rho\leq\displaystyle\frac{1}{2}$, $\psi\in W^{1,2}(B_{1})$ and $\varphi\in W_{0}^{1,2}(B_{1})$ with $\text{supp}\{\varphi\}\subset \overline{B}_{\rho}$,
there exists a linear function $l(x)=a+\vec{b}\cdot x$ such that
$$\left(\frac{1}{|B_{\lambda}|}\int_{B_{\lambda}}|u-l(x)|^{2}dx\right)^{\frac{1}{2}}\leq\frac{1}{2}\lambda,
$$
and
$$|a|+|\vec{b}|\leq C_{0}.
$$
\end{lm}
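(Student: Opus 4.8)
The plan is a single-scale perturbation argument off Lemma~\ref{lm4.2}: when the coefficients are tiny the weak solution is $L^{2}(B_{1/8})$-close to a harmonic function, and a harmonic function is approximated near $0$ by its first-order Taylor polynomial to second order in the radius. The quantifiers must be arranged carefully: I will first fix the scale $\lambda\in(0,\tfrac1{16}]$ using only the dimension $n$, then choose the closeness parameter $\varepsilon=\varepsilon(n)$, and only afterwards set $\delta_{0}:=\delta(\varepsilon)$, the constant produced by Lemma~\ref{lm4.2}.

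\emph{Harmonic comparison and Taylor expansion.} Fix $\varepsilon\in(0,1]$ for the moment and apply Lemma~\ref{lm4.2} with $\delta_{0}:=\delta(\varepsilon)$; this produces a function $p$, harmonic in $B_{1/2}$, with $\int_{B_{1/8}}|u-p|^{2}\,dx\leq\varepsilon^{2}$. From $\frac{1}{|B_{1}|}\int_{B_{1}}u^{2}\,dx\leq1$ we get $\|u\|_{L^{2}(B_{1})}\leq|B_{1}|^{1/2}$, hence $\|p\|_{L^{2}(B_{1/8})}\leq\|u\|_{L^{2}(B_{1/8})}+\varepsilon\leq|B_{1}|^{1/2}+1=:M_{n}$. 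Set $l(x)=p(0)+\nabla p(0)\cdot x$. Interior gradient and Hessian estimates for harmonic functions give $|p(0)|+|\nabla p(0)|\leq C(n)\|p\|_{L^{2}(B_{1/8})}$ and $\sup_{B_{1/16}}|D^{2}p|\leq C(n)\|p\|_{L^{2}(B_{1/8})}$, so Taylor's theorem with remainder yields $|p(x)-l(x)|\leq C(n)M_{n}|x|^{2}$ for $|x|\leq\tfrac1{16}$; consequently, for every $0<\lambda\leq\tfrac1{16}$,
\[
\left(\frac{1}{|B_{\lambda}|}\int_{B_{\lambda}}|p-l|^{2}\,dx\right)^{1/2}\leq C_{1}(n)\,\lambda^{2},\qquad |p(0)|+|\nabla p(0)|\leq C(n)M_{n}.
\]

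\emph{Choice of constants and conclusion.} Fix $\lambda\in(0,\tfrac1{16}]$, depending only on $n$, so small that $C_{1}(n)\lambda^{2}\leq\tfrac14\lambda$; with $\lambda$ now fixed, choose $\varepsilon=\varepsilon(n)\in(0,1]$ so small that $\varepsilon/|B_{\lambda}|^{1/2}\leq\tfrac14\lambda$, and then put $\delta_{0}:=\delta(\varepsilon)$ and $C_{0}:=C(n)M_{n}$. Since $B_{\lambda}\subset B_{1/8}$, the triangle inequality combined with the two displays above gives
\begin{align*}
\left(\frac{1}{|B_{\lambda}|}\int_{B_{\lambda}}|u-l|^{2}\,dx\right)^{1/2}
&\leq\left(\frac{1}{|B_{\lambda}|}\int_{B_{1/8}}|u-p|^{2}\,dx\right)^{1/2}+\left(\frac{1}{|B_{\lambda}|}\int_{B_{\lambda}}|p-l|^{2}\,dx\right)^{1/2}\\
&\leq\frac{\varepsilon}{|B_{\lambda}|^{1/2}}+C_{1}(n)\lambda^{2}\leq\frac12\lambda,
\end{align*}
while $|a|+|\vec{b}|=|p(0)|+|\nabla p(0)|\leq C_{0}$. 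This proves the lemma with these $C_{0},\lambda,\delta_{0}$.

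\emph{Expected main difficulty.} With Lemma~\ref{lm4.2} in hand there is no genuine analytic obstacle: the argument reduces entirely to standard harmonic function estimates. The one point requiring care is the logical dependence of the constants --- $\lambda$ must be pinned down from the purely dimensional Taylor constant $C_{1}(n)$ \emph{before} $\varepsilon$, and hence $\delta_{0}=\delta(\varepsilon)$, is selected, since otherwise the scaling and iteration in Section~4, which reuse the same $\lambda$ and $\delta_{0}$ at every dyadic step, would be circular.
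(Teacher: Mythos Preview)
Your proof is correct and follows essentially the same approach as the paper: invoke Lemma~\ref{lm4.2} to get an $L^{2}(B_{1/8})$-close harmonic function $p$, use interior harmonic estimates to control $p$, $\nabla p$, and $D^{2}p$ near the origin, take $l$ to be the first-order Taylor polynomial of $p$ at $0$, and then combine the two errors with the constants selected in the order $\lambda$, then $\varepsilon$, then $\delta_{0}=\delta(\varepsilon)$. The only cosmetic difference is that the paper squares and uses $(a+b)^{2}\leq 2a^{2}+2b^{2}$ in place of your direct triangle inequality, which is immaterial.
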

\begin{proof}
Let $p$ be the harmonic function of the previous lemma which satisfies that
$$
\int_{B_{\frac{1}{8}}}|u-p|^{2}dx\leq \varepsilon^2
$$
for some $\varepsilon<1$ to be determined. Hence we have
$$\int_{B_{\frac{1}{8}}}|p|^{2}dx\leq2\int_{B_{1}}|u|^{2}dx+2\int_{B_{\frac{1}{8}}}|u-p|^{2}dx\leq2|B_{1}|+2\varepsilon^{2}\leq 4|B_{1}|.
$$
By the properties of harmonic functions, we can choose a constant $C_0$ such that for $|x|\leq\displaystyle\frac{1}{16}$,
$$|\nabla^{2}p(x)|+|\nabla p(x)|+|p(x)|\leq C\int_{B_{\frac{1}{8}}}|p|^{2}dx\leq C_{0}.
$$
Now, we take $l(x)=p(0)+\nabla p(0)\cdot x$. Then there exists $|\xi|\leq\displaystyle\frac{1}{16}$ such that
$$|p(x)-l(x)|=|\nabla^{2}p(\xi)||x|^{2}\leq C_{0}|x|^{2},\quad |x|\leq\frac{1}{16}.
$$
Therefore for each $0<\lambda<\displaystyle\frac{1}{16}$, we have
\begin{eqnarray*}
\frac{1}{|B_{\lambda}|}\int_{B_{\lambda}}|u-l(x)|^{2}dx&\leq&\frac{2}{|B_{\lambda}|}\int_{B_{\lambda}}|u-p|^{2}dx
+\frac{2}{|B_{\lambda}|}\int_{B_{\lambda}}|p-l(x)|^{2}dx\\
&\leq&\frac{2\varepsilon^{2}}{|B_{\lambda}|}+2C_{0}^{2}\lambda^{4}.
\end{eqnarray*}
Now we take $\lambda$ small enough such that
$$2C_{0}^{2}\lambda^{4}\leq\frac{1}{8}\lambda^{2},
$$
and further we take $\varepsilon$ small sufficiently such that
$$\frac{2\varepsilon^{2}}{|B_{\lambda}|}\leq\frac{1}{8}\lambda^{2}
$$
and  $\delta_{0}=\delta(\varepsilon)$ in Lemma $\ref{lm4.2}$. Thus  Lemma follows.
\end{proof}

\section{Continuity of solution under Dini decay conditions}
In this section, we will prove Theorem $\ref{linfty}$. We divide the proof into the following five steps. The first step is to normalize the problem. Secondly we use the key lemma repeatedly to give a iteration result, that is, we approximate the solution by linear functions in different scales. The next step is to prove the sum of errors from each scale is convergent. Finally we find a fixed constant to approximate $u$ in different scales. Scaling back, the continuity of solution follows.\\
\\
{\bf{Proof of Theorem $\ref{linfty}$:}}\\
\
\

$\bf{Step~1: Normalization}$\\

Firstly we give the normalization of the estimates. By the property of Dini modulus of continuity, there exists $\hat{r}$ small enough such that
\begin{equation}\label{4-1}
C_{0}|B_{1}|^{\frac{n+2}{2n}}
\left(\omega_{1}(\hat{r}R)+\frac{1}{1-\lambda}\int_{0}^{\hat{r}R}\frac{\omega_{1}(s)}{s}ds\right)\leq\frac{\delta_{0}}{128},
\end{equation}
where $\delta_{0}$, $\lambda$ and $C_{0}$ are the constants in Lemma $\ref{key1}$. In the following we denote $\bar{r}=\hat{r}R$. Hence $\bar{r}\leq1$. 

We use the nonlinear method to realize the normalization. We set
$$w(x)=\displaystyle\frac{u(\bar{r}x)}{\left(\displaystyle\frac{1}{|B_{\bar{r}}|}\int_{B_{R}}u^{2}dx
\right)^{\frac{1}{2}}+\displaystyle\frac{4|B_{1}|^{\frac{n+2}{2n}}}{\delta_{0}}\left(\omega_{2}(R)
+\displaystyle\frac{1}{1-\lambda}\displaystyle\int_{0}^{R}\frac{\omega_{2}(s)}{s}ds\right)}
\triangleq\displaystyle\frac{u(\bar{r}x)}{A},\quad x\in B_{1},
$$
$$\tilde{\omega}_{1}(r)=\omega_{1}(\bar{r}r),\quad \tilde{\omega}_{2}(r)=\omega_{2}(\bar{r}r).
$$
Then $w(x)$ is a weak solution of
\begin{equation}\label{4-2}-\Delta w+V_{\bar{r}}w=-\text{div}\vec{f}_{\bar{r}}+g_{\bar{r}}\quad \text{in}~B_{1},
\end{equation}
where $\vec{f}_{\bar{r}}(x)=\displaystyle\frac{\bar{r}}{A}\vec{f}(\bar{r}x)$, $\langle V_{\bar{r}}\cdot,\cdot\rangle$ is bounded on $W^{1,2}(B_{1})\times W_{0}^{1,2}(B_{1})$ satisfying
$$\langle V_{\bar{r}}\psi,\varphi\rangle=\frac{\bar{r}^{2}}{\bar{r}^{n}}\langle V\tilde{\psi},\tilde{\varphi}\rangle
$$
for $\tilde{\psi}(x)=\psi(\displaystyle\frac{x}{\bar{r}})\in W^{1,2}(B_{\bar{r}}),~\tilde{\varphi}(x)=
\varphi(\displaystyle\frac{x}{\bar{r}})\in W_{0}^{1,2}(B_{\bar{r}})$, and  $g_{\bar{r}}$ is a bounded linear functional on $W_{0}^{1,2}(B_{1})$ satisfying
$$\langle g_{\bar{r}},\varphi\rangle=\frac{\bar{r}^{2}}{A\bar{r}^{n}}\langle g,\tilde{\varphi}\rangle
$$
for $\tilde{\varphi}(x)=
\varphi(\displaystyle\frac{x}{\bar{r}})\in W_{0}^{1,2}(B_{\bar{r}})$.

Then it follows that
\begin{equation}\label{4-3}\left(\frac{1}{|B_{1}|}\displaystyle\int_{B_{1}}w^{2}dx\right)^{\frac{1}{2}}=
\frac{\left(\displaystyle\frac{1}{|B_{1}|}\displaystyle\int_{B_{1}}u(\bar{r}x)^{2}dx\right)^{\frac{1}{2}}}
{A}
\leq\frac{\left(\displaystyle\frac{1}{|B_{\bar{r}}|}\displaystyle\int_{B_{\bar{r}}}u^{2}dx\right)^{\frac{1}{2}}}
{\left(\displaystyle\frac{1}{|B_{\bar{r}}|}\int_{B_{R}}u^{2}dx\right)^{\frac{1}{2}}
}\leq1.
\end{equation}
Moreover, for any $0<r\leq\displaystyle\frac{1}{2}$, we have for any $r<s\leq 2r$
\begin{eqnarray}\label{4-4}
\left|\langle V_{\bar{r}}\psi,\varphi \rangle\right|
&=&\frac{\bar{r}^{2}}{\bar{r}^{n}}\left|\langle V\tilde{\psi},\tilde{\varphi}\rangle\right|\nonumber\\
&\leq&\frac{\bar{r}^{2}}{\bar{r}^{n}}\omega_{1}(s\bar{r})\|\tilde{\psi}\|_{L_{r\bar{r},s\bar{r}}^{1,2}}\|\nabla \tilde{\varphi}\|_{L^{2}(B_{r\bar{r}})}\nonumber\\
&=&\omega_{1}(s\bar{r})\|\psi\|_{L_{r,s}^{1,2}}\|\nabla \varphi\|_{L^{2}(B_{r})}\nonumber\\
&=&\tilde{\omega}_{1}(s)\|\psi\|_{L_{r,s}^{1,2}}\|\nabla \varphi\|_{L^{2}(B_{r})},
\end{eqnarray}
for $\psi\in W^{1,2}(B_{1})$ and $\varphi\in W_{0}^{1,2}(B_{1})$ with $\text{supp}\{\varphi\}\subset \overline{B}_{r}$, where $\tilde{\psi}(x)=\psi(\displaystyle\frac{x}{\bar{r}})\in W^{1,2}(B_{\bar{r}})$, $\tilde{\varphi}(x)=
\varphi(\displaystyle\frac{x}{\bar{r}})\in W_{0}^{1,2}(B_{\bar{r}})$. Similarly, we also have
\begin{eqnarray}\label{4-5}
\left|\langle g_{\bar{r}},\varphi \rangle\right|
&=&\frac{\bar{r}^{2}}{A\bar{r}^{n}}\left|\langle g,\tilde{\varphi}\rangle\right|\nonumber\\
&\leq&\frac{\bar{r}^{2}}{A\bar{r}^{n}}\frac{\omega_{2}(r\bar{r})}{(r\bar{r})^{2}}|B_{r\bar{r}}|^{\frac{n+2}{2n}}\|\nabla \tilde{\varphi}\|_{L^{2}(B_{r\bar{r}})}\nonumber\\
&=&\frac{\omega_{2}(r\bar{r})}{Ar^{2}}|B_{r}|^{\frac{n+2}{2n}}\|\nabla \varphi\|_{L^{2}(B_{r})}\nonumber\\
&=&\frac{\tilde{\omega}_{2}(r)}{Ar^{2}}|B_{r}|^{\frac{n+2}{2n}}\|\nabla \varphi\|_{L^{2}(B_{r})},
\end{eqnarray}
\begin{eqnarray}\label{4-6}
\int_{B_{r}}|\vec{f}_{\bar{r}}|^{2}|\varphi|^{2}dx&=&
\int_{B_{r}}\frac{\bar{r}^{2}}{A^{2}}|\vec{f}(\bar{r}x)|^{2}|\varphi(x)|^{2}dx\nonumber\\
&\leq&\frac{1}{A^{2}}\frac{\bar{r}^{2}}{\bar{r}^{n}}(\omega_{2}(r\bar{r}))^{2}\int_{B_{r\bar{r}}}
|\nabla\tilde{\varphi}(t)|^{2}dt\nonumber\\
&=&\frac{(\tilde{\omega}_{2}(r))^{2}}{A^{2}}\displaystyle\int_{B_{r}}|\nabla \varphi(x)|^{2}dx,
\end{eqnarray}
and by (\ref{4-1}) we know that $\tilde{\omega}_{1}(r)$ satisfies
\begin{equation}\label{omega1}
C_{0}|B_{1}|^{\frac{n+2}{2n}}
\left(\tilde{\omega}_{1}(1)+\frac{1}{1-\lambda}\int_{0}^{1}\frac{\tilde{\omega}_{1}(s)}{s}ds\right)\leq\frac{\delta_{0}}{128}.
\end{equation}

\
\

$\bf{Step~2:~Iterating~results}$\\

Now, we prove the following claim inductively: there is a series of linear functions $\{l_{k}(x)\}_{k=0}^{\infty}$ with $l_{k}(x)=a_{k}+\vec{b}_{k}\cdot x$ and a nonnegative sequence $\{T_{k}\}_{k=0}^{\infty}$ such that
\begin{equation}\label{ite}
\left(\frac{1}{|B_{\lambda^{k}}|}\int_{B_{\lambda^{k}}}|w-l_{k}(x)|^{2}dx\right)^{\frac{1}{2}}\leq T_{k},\quad \forall k\geq0,
\end{equation}
and
\begin{equation}\label{error}
|a_{k}-a_{k-1}|+\lambda^{k-1}|\vec{b}_{k}-\vec{b}_{k-1}|\leq C_{0}T_{k-1},\quad \forall k\geq1,
\end{equation}
where $a_{0}=|\vec{b}_{0}|=0$, $T_{0}=1$,
\begin{equation}\label{dtk}
T_{k}=\max\left\{\frac{1}{2}\lambda T_{k-1},~\frac{16\tilde{\omega}_{1}(\lambda^{k})|B_{1}|^{\frac{n+2}{2n}}}{\delta_{0}}(|a_{k}|+\lambda^{k}|\vec{b}_{k}|)
,~\frac{16\tilde{\omega}_{2}(\lambda^{k})|B_{1}|^{\frac{n+2}{2n}}}{A\delta_{0}}\right\}
\end{equation}
for $k=1,2,\cdots$, and $C_{0}$ is the constant in Lemma $\ref{key1}$.

Firstly by normalized assumption in the {\bf Step 1}, we know that $(\ref{ite})$  holds for $k=0$  since $a_{0}=|\vec{b}_{0}|=0$ and $T_{0}=1$, i.e.
$$\left(\displaystyle\frac{1}{|B_{1}|}\displaystyle\int_{B_{1}}w^{2}dx\right)^{\frac{1}{2}}\leq 1=T_{0}.$$
Now assume by induction that the conclusion is true for $k$. We set
$$\tilde{w}(x)=\frac{w(\lambda^{k}x)-l_{k}(\lambda^{k}x)}{T_{k}}.
$$
Since $w(x)$ satisfies (\ref{4-2}), we know that $\tilde{w}$ is a weak solution to
\begin{eqnarray*}
-\Delta \tilde{w}+\tilde{V}\tilde{w}=-\text{div}\vec{\tilde{f}}+\tilde{g}-\tilde{V}\frac{l_{k}(\lambda^{k}x)}{T_{k}},\quad x\in B_{1},
\end{eqnarray*}
where $\vec{\tilde{f}}(x)
=\displaystyle\frac{\lambda^{k}\vec{f}_{\bar{r}}(\lambda^{k}x)}{T_{k}}$, $\langle \tilde{V}\cdot,\cdot\rangle$ is bounded on $W^{1,2}(B_{1})\times W_{0}^{1,2}(B_{1})$ satisfying
$$\langle \tilde{V}\psi,\varphi\rangle=\frac{\lambda^{2k}}{\lambda^{nk}}\langle V_{\bar{r}}\tilde{\psi},\tilde{\varphi}\rangle
$$
for $\tilde{\psi}(x)=\psi(\displaystyle\frac{x}{\lambda^{k}})\in W^{1,2}(B_{\lambda^{k}}),~\tilde{\varphi}(x)=
\varphi(\displaystyle\frac{x}{\lambda^{k}})\in W_{0}^{1,2}(B_{\lambda^{k}})$, and $\tilde{g}$ is a bounded linear functional on $W_{0}^{1,2}(B_{1})$ satisfying
$$\langle \tilde{g},\varphi\rangle=\frac{\lambda^{2k}}{T_{k}\lambda^{nk}}\langle g_{\bar{r}},\tilde{\varphi}\rangle
$$
for $\tilde{\varphi}(x)=
\varphi(\displaystyle\frac{x}{\lambda^{k}})\in W_{0}^{1,2}(B_{\lambda^{k}})$.
Thus by using the inductive assumption and (\ref{4-4}), we obtain that
$$\frac{1}{|B_{1}|}\displaystyle\int_{B_{1}}\tilde{w}^{2}dx=
\frac{1}{|B_{1}|}\displaystyle\int_{B_{1}}\frac{|w(\lambda^{k}x)-l_{k}|^{2}}{T_{k}^{2}}dx
\leq1,
$$

\begin{equation}\label{tildev}
\begin{aligned}
\left|\langle\tilde{V}\psi,\varphi \rangle\right|
&=\frac{\lambda^{2k}}{\lambda^{nk}}\left|\langle V_{\bar{r}}\tilde{\psi},\tilde{\varphi}\rangle\right|\\
&\leq\frac{\lambda^{2k}}{\lambda^{nk}}\tilde{\omega}_{1}(\lambda^{k}\theta)\|\tilde{\psi}\|_{L_{\lambda^{k}\rho,\lambda^{k}\theta}^{1,2}}\|\nabla \tilde{\varphi}\|_{L^{2}(B_{\lambda^{k}\rho})}\\
&\leq\tilde{\omega}_{1}(\lambda^{k})\|\psi\|_{L_{\rho,\theta}^{1,2}}\|\nabla \varphi\|_{L^{2}(B_{\rho})}\\
&\leq\frac{\delta_{0}}{128}\|\psi\|_{L_{\rho,\theta}^{1,2}}\|\nabla \varphi\|_{L^{2}(B_{\rho})},~~~\forall \rho<\theta\leq2\rho,
\end{aligned}
\end{equation}
for any $0<\rho\leq\displaystyle\frac{1}{2}$, $\psi\in W^{1,2}(B_{1})$, $\varphi\in W_{0}^{1,2}(B_{1})$ with $\text{supp}\{\varphi\}\subset \overline{B}_{\rho}$,
$\tilde{\psi}(x)=\psi(\displaystyle\frac{x}{\lambda^{k}})\in W^{1,2}(B_{\lambda^{k}})$ and  $\tilde{\varphi}(x)=
\varphi(\displaystyle\frac{x}{\lambda^{k}})\in W_{0}^{1,2}(B_{\lambda^{k}})$. Here we have used $\tilde{\omega}_{1}(1)\leq\displaystyle\frac{\delta_{0}}{128}$ by (\ref{omega1}). Similarly, since $T_{k}\geq\displaystyle\frac{16\tilde{\omega}_{2}(\lambda^{k})|B_{1}|^{\frac{n+2}{2n}}}{A\delta_{0}}$, we have from (\ref{4-5}) and (\ref{4-6})
\begin{eqnarray*}
\left|\langle\tilde{g},\varphi \rangle\right|
&=&\frac{\lambda^{2k}}{T_{k}\lambda^{nk}}\left|\langle g_{\bar{r}},\tilde{\varphi}\rangle\right|\\
&\leq&\frac{\lambda^{2k}}{T_{k}\lambda^{nk}}\frac{\tilde{\omega}_{2}(\lambda^{k}\rho)}{A(\lambda^{k}\rho)^{2}}|B_{\lambda^{k}\rho}|^{\frac{n+2}{2n}}\|\nabla \tilde{\varphi}\|_{L^{2}(B_{\lambda^{k}\rho})}\\
&\leq&\frac{\tilde{\omega}_{2}(\lambda^{k})}{AT_{k}}|B_{1}|^{\frac{n+2}{2n}}\|\nabla \varphi\|_{L^{2}(B_{\rho})}\\
&\leq&\frac{\delta_{0}}{16}\|\nabla \varphi\|_{L^{2}(B_{\rho})},
\end{eqnarray*}
and
\begin{eqnarray*}
\int_{B_{\rho}}|\vec{\tilde{f}}|^{2}\varphi^{2}dx&\leq&\frac{\lambda^{2k}}{T_{k}^{2}}\int_{B_{\rho}}|\vec{f}_{\bar{r}}(\lambda^{k}x)|^{2}
|\varphi(x)|^{2}dx\\
&=&\frac{\lambda^{2k}}{T_{k}^{2}}\frac{1}{\lambda^{nk}}\int_{B_{\lambda^{k}\rho}}|\vec{f}_{\bar{r}}(t)|^{2}|\tilde{\varphi}(t)|^{2}dt\\
&\leq&\frac{\lambda^{2k}}{T_{k}^{2}}\frac{1}{\lambda^{nk}}\frac{(\tilde{\omega}_{2}(\lambda^{k}\rho))^{2}}{A^{2}}
\int_{B_{\lambda^{k}\rho}}|\nabla\tilde{\varphi}(t)|^{2}dt\\
&\leq&\frac{\delta_{0}^{2}}{256}\int_{B_{\rho}}|\nabla\varphi(x)|^{2}dx.
\end{eqnarray*}
Furthermore, since $\tilde{V}\displaystyle\frac{l_{k}(\lambda^{k}x)}{T_{k}}$ is also a bounded linear functional on $W_{0}^{1,2}(B_{1})$, hence by $(\ref{tildev})$ we have for any $\varphi\in W_{0}^{1,2}(B_{1})$ with $\text{supp}\{\varphi\}\subset \overline{B}_{\rho}$,
\begin{eqnarray*}
\left|\langle\tilde{V}\frac{l_{k}(\lambda^{k}x)}{T_{k}},\varphi \rangle\right|
&\leq&\frac{\tilde{\omega}_{1}(\lambda^{k})}{T_{k}}\|l_{k}(\lambda^{k}x)\|_{L_{\rho,2\rho}^{1,2}}\|\nabla \varphi\|_{L^{2}(B_{\rho})}\\
&\leq&\frac{4|B_{1}|^{\frac{n+2}{2n}}\tilde{\omega}_{1}(\lambda^{k})(|a_{k}|+\lambda^{k}|\vec{b}_{k}|)}{T_{k}}
\|\nabla \varphi\|_{L^{2}(B_{\rho})}\\
&\leq&\frac{\delta_{0}}{4}\|\nabla \varphi\|_{L^{2}(B_{\rho})},
\end{eqnarray*}
where $T_{k}
\geq\displaystyle\frac{16\tilde{\omega}_{1}(\lambda^{k})|B_{1}|^{\frac{n+2}{2n}}}{\delta_{0}}(|a_{k}|+\lambda^{k}|\vec{b}_{k}|)$ is used in the last inequality. Thus, we obtain that  for any $0<\rho\leq\displaystyle\frac{1}{2}$, $\psi\in W^{1,2}(B_{1})$, $\varphi\in W_{0}^{1,2}(B_{1})$ with $\text{supp}\{\varphi\}\subset \overline{B}_{\rho}$,
$$|\langle \tilde{V}\psi,\varphi\rangle|\leq \delta_{0}\|\psi\|_{L_{\rho,\theta}^{1,2}}\|\nabla\varphi\|_{L^{2}(B_{\rho})}, ~~~\rho<\theta\leq2\rho$$
$$|\langle \tilde{g}-\tilde{V}\frac{l_{k}(\lambda^{k}x)}{T_{k}},\varphi\rangle|\leq \delta_{0}\|\nabla\varphi\|_{L^{2}(B_{\rho})},$$
$$\displaystyle\int_{B_{\rho}}|\tilde{\vec{f}}|^{2}\varphi^{2}dx\leq\delta_{0}^{2}\displaystyle
\int_{B_{\rho}}|\nabla \varphi|^{2}dx.
$$
Now we can apply Lemma $\ref{key1}$ for $\tilde{w}$ to obtain a linear function $l(x)=a+\vec{b}\cdot x$ with $|a|+|\vec{b}|\leq C_{0}$ such that
$$\left(\frac{1}{|B_{\lambda}|}\int_{B_{\lambda}}|\tilde{w}-l(x)|^{2}dx\right)^{\frac{1}{2}}\leq\frac{1}{2}\lambda.
$$
We scale back to get
$$\left(\frac{1}{|B_{\lambda^{k+1}}|}\int_{B_{\lambda^{k+1}}}|w(x)-l_{k}(x)-T_{k}l(\frac{x}{\lambda^{k}})|^{2}dx\right)^{\frac{1}{2}}
\leq\frac{1}{2}\lambda T_{k}\leq T_{k+1}.
$$
Thus we prove the $(k+1)$-th step by letting
$$l_{k+1}(x)=l_{k}(x)+T_{k}l(\frac{x}{\lambda^{k}}).
$$
Clearly, it follows that
$$|a_{k+1}-a_{k}|+\lambda^{k}|\vec{b}_{k+1}-\vec{b}_{k}|\leq C_{0}T_{k}.
$$

{\bf{Step~3: Prove $\displaystyle\sum\limits_{k=0}^{\infty}T_{k}$ is convergent and $\{a_{k}\}_{k=0}^{\infty}$ is a Cauchy sequence.}}\\

By induction assumption $(\ref{error})$, since $a_{0}=|\vec{b}_{0}|=0$, then for any $k\geq1$,
$$|a_{k}|\leq\sum\limits_{i=0}^{k-1}|a_{i+1}-a_{i}|\leq C_{0}\sum\limits_{i=0}^{k-1}T_{i},
$$
\begin{eqnarray*}
|\vec{b}_{k}|\leq\sum\limits_{i=0}^{k-1}|\vec{b}_{i+1}-\vec{b}_{i}|\leq C_{0}\sum\limits_{i=0}^{k-1}\frac{T_{i}}{\lambda^{i}},
\end{eqnarray*}
\begin{equation}\label{bk}
\lambda^{k}|\vec{b}_{k}|\leq C_{0}\lambda^{k}\left(\sum\limits_{i=0}^{k-1}\frac{T_{i}}{\lambda^{i}}\right)
\leq C_{0}\sum\limits_{i=0}^{k-1}T_{i}.
\end{equation}
By the definition of $T_{k}$, $(\ref{dtk})$ implies that
\begin{equation}\label{Tk}
\begin{aligned}
T_{i}
&\leq\frac{1}{2}\lambda T_{i-1}+\frac{16\tilde{\omega}_{1}(\lambda^{i})|B_{1}|^{\frac{n+2}{2n}}}{\delta_{0}}(|a_{i}|+\lambda^{i}|\vec{b}_{i}|)
+\frac{16\tilde{\omega}_{2}(\lambda^{i})|B_{1}|^{\frac{n+2}{2n}}}{A\delta_{0}}\\
&\leq\lambda T_{i-1}+\frac{16\tilde{\omega}_{1}(\lambda^{i})|B_{1}|^{\frac{n+2}{2n}}}{\delta_{0}}(|a_{i}|+\lambda^{i}|\vec{b}_{i}|)
+\frac{16\tilde{\omega}_{2}(\lambda^{i})|B_{1}|^{\frac{n+2}{2n}}}{A\delta_{0}}\\
&\leq\frac{1}{2} T_{i-1}+\frac{16\tilde{\omega}_{1}(\lambda^{i})|B_{1}|^{\frac{n+2}{2n}}}{\delta_{0}}(|a_{i}|+\lambda^{i}|\vec{b}_{i}|)
+\frac{16\tilde{\omega}_{2}(\lambda^{i})|B_{1}|^{\frac{n+2}{2n}}}{A\delta_{0}},\quad i=1,2,\cdots.
\end{aligned}
\end{equation}
Hence for any fixed $k\geq1$,
\begin{eqnarray*}
\sum_{i=1}^{k}T_{i}&\leq&\frac{1}{2}\sum_{i=1}^{k}T_{i}+\frac{1}{2}T_{0}+
\sum_{i=1}^{k}\frac{16\tilde{\omega}_{1}(\lambda^{i})|B_{1}|^{\frac{n+2}{2n}}}{\delta_{0}}(|a_{i}|+\lambda^{i}|\vec{b}_{i}|)+
\sum_{i=1}^{k}\frac{16\tilde{\omega}_{2}(\lambda^{i})|B_{1}|^{\frac{n+2}{2n}}}{A\delta_{0}}\\
&\leq&\frac{1}{2}\sum_{i=1}^{k}T_{i}+\frac{1}{2}T_{0}+
\sum_{i=1}^{k}\frac{16\tilde{\omega}_{1}(\lambda^{i})|B_{1}|^{\frac{n+2}{2n}}}{\delta_{0}}\left(C_{0}\sum\limits_{l=0}^{i-1}T_{l}
+C_{0}\sum\limits_{l=0}^{i-1}T_{l}\right)+\sum_{i=1}^{k}\frac{16\tilde{\omega}_{2}(\lambda^{i})|B_{1}|^{\frac{n+2}{2n}}}{A\delta_{0}}\\
&\leq&\frac{1}{2}\sum_{i=1}^{k}T_{i}+\frac{1}{2}T_{0}+
\sum_{i=1}^{k}\frac{32C_{0}\tilde{\omega}_{1}(\lambda^{i})|B_{1}|^{\frac{n+2}{2n}}}{\delta_{0}}\sum\limits_{l=0}^{k-1}T_{l}
+\sum_{i=1}^{k}\frac{16\tilde{\omega}_{2}(\lambda^{i})|B_{1}|^{\frac{n+2}{2n}}}{A\delta_{0}}\\
&\leq&\frac{1}{2}\sum_{i=1}^{k}T_{i}+\frac{1}{2}T_{0}+
\sum_{i=0}^{k}\frac{32C_{0}\tilde{\omega}_{1}(\lambda^{i})|B_{1}|^{\frac{n+2}{2n}}}{\delta_{0}}\left(T_{0}+\sum\limits_{i=1}^{k}T_{i}\right)
+\sum_{i=0}^{k}\frac{16\tilde{\omega}_{2}(\lambda^{i})|B_{1}|^{\frac{n+2}{2n}}}{A\delta_{0}}.
\end{eqnarray*}
Due to for any $k\geq0$
$$\sum_{i=0}^{k}\tilde{\omega}_{1}(\lambda^{i})\leq
\left(\tilde{\omega}_{1}(1)+\frac{1}{1-\lambda}\int_{0}^{1}\frac{\tilde{\omega}_{1}(s)}{s}ds\right),\quad
\sum_{i=0}^{k}\tilde{\omega}_{2}(\lambda^{i})\leq
\left(\tilde{\omega}_{2}(1)+\frac{1}{1-\lambda}\int_{0}^{1}\frac{\tilde{\omega}_{2}(s)}{s}ds\right).
$$
It follows that for any $k\geq1$
\begin{eqnarray*}
\sum_{i=1}^{k}T_{i}&\leq&\frac{1}{2}\sum_{i=1}^{k}T_{i}+\frac{1}{2}T_{0}+
\frac{32C_{0}|B_{1}|^{\frac{n+2}{2n}}}{\delta_{0}}\left(\tilde{\omega}_{1}(1)
+\frac{1}{1-\lambda}\int_{0}^{1}\frac{\tilde{\omega}_{1}(s)}{s}ds\right)\left(T_{0}+\sum\limits_{i=1}^{k}T_{i}\right)\\
&&+\frac{16|B_{1}|^{\frac{n+2}{2n}}}{A\delta_{0}}\left(\tilde{\omega}_{2}(1)+\frac{1}{1-\lambda}\int_{0}^{1}\frac{\tilde{\omega}_{2}(s)}{s}ds\right).
\end{eqnarray*}
Combining with $(\ref{omega1})$ and the definition of $A$, we obtain
\begin{eqnarray*}
\sum_{i=1}^{k}T_{i}\leq\frac{1}{2}\sum_{i=1}^{k}T_{i}+\frac{1}{2}T_{0}+
\frac{1}{4}\left(T_{0}+\sum\limits_{i=1}^{k}T_{i}\right)+4.
\end{eqnarray*}
Consequently we have
$$\sum_{i=1}^{k}T_{i}\leq 3T_{0}+16.
$$
Since $T_{0}=1$, then for any $k\geq0$ it follows
$$\sum_{i=0}^{k}T_{i}\leq20.
$$
Furthermore, for any $k\geq0$,
\begin{equation}\label{lkbdd}
|a_{k}|+\lambda^{k}|\vec{b}_{k}|
\leq 40C_{0}\triangleq L.
\end{equation}
Thus we have shown $\sum\limits_{k=0}^{\infty}T_{k}$ is convergent. It is obvious that $\{a_{k}\}_{k=0}^{\infty}$ is a Cauchy sequence, which we can assume its limit is $a_{\infty}$.\\

{\bf{Step~4: Prove $w$ is continuous at 0.}}\\

First, for any $k\geq0$, we have
\begin{equation}\label{5+3}
\begin{aligned}
\left(\frac{1}{|B_{\lambda^{k}}|}\displaystyle\int_{B_{\lambda^{k}}}|w-a_{\infty}|^{2}dx\right)^{\frac{1}{2}}&\leq
\left(\frac{1}{|B_{\lambda^{k}}|}\displaystyle\int_{B_{\lambda^{k}}}|w-l_{k}(x)|^{2}dx\right)^{\frac{1}{2}}+
\sum_{i=k}^{\infty}\left(\frac{1}{|B_{\lambda^{k}}|}\displaystyle\int_{B_{\lambda^{k}}}|a_{i}-a_{i+1}|^{2}dx\right)^{\frac{1}{2}}\\
&\quad+\left(\frac{1}{|B_{\lambda^{k}}|}\displaystyle\int_{B_{\lambda^{k}}}|\vec{b}_{k}\cdot x|^{2}dx\right)^{\frac{1}{2}}\\
&\leq T_{k}+\sum_{i=k}^{\infty}|a_{i}-a_{i+1}|+\lambda^{k}|\vec{b}_{k}|\\
&\leq(1+C_{0})T_{k}+C_{0}\sum_{i=k+1}^{\infty}T_{i}+\lambda^{k}|\vec{b}_{k}|.
\end{aligned}
\end{equation}
Next we estimate $T_{k}$ and $\sum\limits_{i=k+1}^{\infty}T_{i}$. Using inequality $(\ref{Tk})$ repeatedly, we have for any $k\geq1$,
\begin{eqnarray*}
T_{k}&\leq&\lambda^{k}T_{0}+\lambda^{k}\sum_{i=1}^{k}
\frac{16\tilde{\omega}_{1}(\lambda^{i})|B_{1}|^{\frac{n+2}{2n}}}{\delta_{0}\lambda^{i}}(a_{i}+\lambda^{i}|\vec{b}_{i}|)+\lambda^{k}\sum_{i=1}^{k}
\frac{16\tilde{\omega}_{2}(\lambda^{i})|B_{1}|^{\frac{n+2}{2n}}}{A\delta_{0}\lambda^{i}}\\
&\leq&\lambda^{k}T_{0}+\frac{16\lambda^{k}L|B_{1}|^{\frac{n+2}{2n}}}{\delta_{0}\lambda(1-\lambda)}
\int_{\lambda^{k}}^{1}\frac{\tilde{\omega}_{1}(s)}{s^{2}}ds
+\frac{16\lambda^{k}|B_{1}|^{\frac{n+2}{2n}}}{A\delta_{0}\lambda(1-\lambda)}\int_{\lambda^{k}}^{1}\frac{\tilde{\omega}_{2}(s)}{s^{2}}ds
.
\end{eqnarray*}
Combining with $T_{0}=1$ we have for any $k\geq0$,
\begin{equation}\label{tk}
T_{k}\leq\lambda^{k}T_{0}+\frac{16\lambda^{k}L|B_{1}|^{\frac{n+2}{2n}}}{\delta_{0}\lambda(1-\lambda)}
\int_{\lambda^{k}}^{1}\frac{\tilde{\omega}_{1}(s)}{s^{2}}ds
+\frac{16\lambda^{k}|B_{1}|^{\frac{n+2}{2n}}}{A\delta_{0}\lambda(1-\lambda)}\int_{\lambda^{k}}^{1}\frac{\tilde{\omega}_{2}(s)}{s^{2}}ds
.
\end{equation}
Similarly, for any $k\geq0$, we have
\begin{eqnarray*}
\sum\limits_{i=k+1}^{\infty}T_{i}&\leq&\sum\limits_{i=k+1}^{\infty}\left(\frac{1}{2} T_{i-1}+\frac{16\tilde{\omega}_{1}(\lambda^{i})|B_{1}|^{\frac{n+2}{2n}}}
{\delta_{0}}(|a_{i}|+\lambda^{i}|\vec{b}_{i}|)
+\frac{16\tilde{\omega}_{2}(\lambda^{i})|B_{1}|^{\frac{n+2}{2n}}}{A\delta_{0}}\right)\\
&\leq&\frac{1}{2} T_{k}+\frac{1}{2}\sum\limits_{i=k+1}^{\infty}T_{i}+\frac{16 L|B_{1}|^{\frac{n+2}{2n}}}{\delta_{0}}\sum\limits_{i=k+1}^{\infty}
\tilde{\omega}_{1}(\lambda^{i})+\frac{16|B_{1}|^{\frac{n+2}{2n}}}{A\delta_{0}}\sum\limits_{i=k+1}^{\infty}
\tilde{\omega}_{2}(\lambda^{i})\\
&\leq&\frac{1}{2} T_{k}+\frac{1}{2}\sum\limits_{i=k+1}^{\infty}T_{i}+
\frac{16 L|B_{1}|^{\frac{n+2}{2n}}}{\delta_{0}(1-\lambda)}
\int_{0}^{\lambda^{k}}\frac{\tilde{\omega}_{1}(s)}{s}ds
+\frac{16|B_{1}|^{\frac{n+2}{2n}}}{A\delta_{0}(1-\lambda)}
\int_{0}^{\lambda^{k}}\frac{\tilde{\omega}_{2}(s)}{s}ds,
\end{eqnarray*}
Hence it follows
\begin{equation}\label{5+1}
C_{0}\sum\limits_{i=k+1}^{\infty}T_{i}\leq
C_{0} T_{k}+
\frac{32C_{0} L|B_{1}|^{\frac{n+2}{2n}}}{\delta_{0}(1-\lambda)}
\int_{0}^{\lambda^{k}}\frac{\tilde{\omega}_{1}(s)}{s}ds
+\frac{32C_{0}|B_{1}|^{\frac{n+2}{2n}}}{A\delta_{0}(1-\lambda)}
\int_{0}^{\lambda^{k}}\frac{\tilde{\omega}_{2}(s)}{s}ds.
\end{equation}
Now we estimate $\lambda^{k}|\vec{b}_{k}|$. Notice that the inequality $(\ref{Tk})$ implies that
$$\frac{T_{i}}{\lambda^{i}}\leq\frac{1}{2}\frac{T_{i-1}}{\lambda^{i-1}}+\frac{16\tilde{\omega}_{1}(\lambda^{i})|B_{1}|^{\frac{n+2}{2n}}}{\delta_{0}\lambda^{i}}(|a_{i}|+\lambda^{i}|\vec{b}_{i}|)
+\frac{16\tilde{\omega}_{2}(\lambda^{i})|B_{1}|^{\frac{n+2}{2n}}}{A\delta_{0}\lambda^{i}},\quad i=1,2,\cdots.
$$
Hence we have for any $k\geq1$
\begin{eqnarray*}
\sum_{i=1}^{k}\frac{T_{i}}{\lambda^{i}}&\leq& \frac{1}{2}\sum_{i=1}^{k}\frac{T_{i-1}}{\lambda^{i-1}}+\frac{16L|B_{1}|^{\frac{n+2}{2n}}}{\delta_{0}}\sum_{i=1}^{k}
\frac{\tilde{\omega}_{1}(\lambda^{i})}{\lambda^{i}}
+\frac{16|B_{1}|^{\frac{n+2}{2n}}}{A\delta_{0}}\sum_{i=1}^{k}\frac{\tilde{\omega}_{2}(\lambda^{i})}{\lambda^{i}}\\
&\leq&\frac{1}{2}T_{0}+\frac{1}{2}\sum_{i=1}^{k}\frac{T_{i}}{\lambda^{i}}+\frac{16L|B_{1}|^{\frac{n+2}{2n}}}{\delta_{0}\lambda(1-\lambda)}
\int_{\lambda^{k}}^{1}\frac{\tilde{\omega}_{1}(s)}{s^{2}}ds
+\frac{16|B_{1}|^{\frac{n+2}{2n}}}{A\delta_{0}\lambda(1-\lambda)}\int_{\lambda^{k}}^{1}\frac{\tilde{\omega}_{2}(s)}{s^{2}}ds.
\end{eqnarray*}
It is clear that
$$\sum_{i=1}^{k}\frac{T_{i}}{\lambda^{i}}\leq
T_{0}+\frac{32L|B_{1}|^{\frac{n+2}{2n}}}{\delta_{0}\lambda(1-\lambda)}\int_{\lambda^{k}}^{1}\frac{\tilde{\omega}_{1}(s)}{s^{2}}ds
+\frac{32|B_{1}|^{\frac{n+2}{2n}}}{A\delta_{0}\lambda(1-\lambda)}\int_{\lambda^{k}}^{1}\frac{\tilde{\omega}_{2}(s)}{s^{2}}ds.
$$
Since $T_{0}=1$,  we also have for any $k\geq0$,
$$\sum_{i=0}^{k}\frac{T_{i}}{\lambda^{i}}\leq
2T_{0}+\frac{32L|B_{1}|^{\frac{n+2}{2n}}}{\delta_{0}\lambda(1-\lambda)}\int_{\lambda^{k}}^{1}\frac{\tilde{\omega}_{1}(s)}{s^{2}}ds
+\frac{32|B_{1}|^{\frac{n+2}{2n}}}{A\delta_{0}\lambda(1-\lambda)}\int_{\lambda^{k}}^{1}\frac{\tilde{\omega}_{2}(s)}{s^{2}}ds.
$$
Therefore from $(\ref{bk})$ we have
\begin{equation}\label{5+2}\lambda^{k}|\vec{b}_{k}|\leq C_{0}\lambda^{k}\left(\sum\limits_{i=0}^{k-1}\frac{T_{i}}{\lambda^{i}}\right)
\leq C_{0}\lambda^{k}\left(2T_{0}+\frac{32L|B_{1}|^{\frac{n+2}{2n}}}{\delta_{0}\lambda(1-\lambda)}\int_{\lambda^{k}}^{1}\frac{\tilde{\omega}_{1}(s)}{s^{2}}ds
+\frac{32|B_{1}|^{\frac{n+2}{2n}}}{A\delta_{0}\lambda(1-\lambda)}\int_{\lambda^{k}}^{1}\frac{\tilde{\omega}_{2}(s)}{s^{2}}ds\right).
\end{equation}
Substituting $(\ref{tk})$, $(\ref{5+1})$ and $(\ref{5+2})$ into $(\ref{5+3})$ to get
\begin{eqnarray*}
& & \left(\frac{1}{|B_{\lambda^{k}}|}\displaystyle\int_{B_{\lambda^{k}}}|w-a_{\infty}|^{2}dx\right)^{\frac{1}{2}}\\
&\leq&
(1+C_{0})T_{k}+C_{0}\sum_{i=k+1}^{\infty}T_{i}+\lambda^{k}|\vec{b}_{k}|\\
&\leq&\left(2C_{0}+1\right)T_{k}+
\frac{32C_{0} L|B_{1}|^{\frac{n+2}{2n}}}{\delta_{0}(1-\lambda)}
\int_{0}^{\lambda^{k}}\frac{\tilde{\omega}_{1}(s)}{s}ds
+\frac{32C_{0}|B_{1}|^{\frac{n+2}{2n}}}{A\delta_{0}(1-\lambda)}
\int_{0}^{\lambda^{k}}\frac{\tilde{\omega}_{2}(s)}{s}ds+\lambda^{k}|\vec{b}_{k}|\\
&\leq&\left(4C_{0}+1\right)\lambda^{k}
+\frac{32L(C_{0}+1)|B_{1}|^{\frac{n+2}{2n}} }{\delta_{0}\lambda(1-\lambda)}
\left(\lambda^{k}\int_{\lambda^{k}}^{1}\frac{\tilde{\omega}_{1}(s)}{s^{2}}ds
+\int_{0}^{\lambda^{k}}\frac{\tilde{\omega}_{1}(s)}{s}ds\right)\\
&&+\frac{32(C_{0}+1)|B_{1}|^{\frac{n+2}{2n}} }{A\delta_{0}\lambda(1-\lambda)}
\left(\lambda^{k}\int_{\lambda^{k}}^{1}\frac{\tilde{\omega}_{2}(s)}{s^{2}}ds
+\int_{0}^{\lambda^{k}}\frac{\tilde{\omega}_{2}(s)}{s}ds\right).
\end{eqnarray*}
We denote $C_{\lambda,\delta_{0}}=\displaystyle\frac{32L(C_{0}+1)|B_{1}|^{\frac{n+2}{2n}}}
{\lambda^{\frac{n}{2}+1}(1-\lambda)\delta_{0}}$ and $\bar{C}_{\lambda,\delta_{0}}=\displaystyle\frac{2C_{\lambda,\delta_{0}}}{\lambda}$. Since for any $0<r\leq1$  there exists a $k\geq0$ such that $\lambda^{k+1}<r\leq \lambda^{k}$, we obtain
\begin{eqnarray*}
\left(\frac{1}{|B_{r}|}\displaystyle\int_{B_{r}}|w-a_{\infty}|^{2}dx\right)^{\frac{1}{2}}&\leq&
\left(\frac{1}{|B_{\lambda^{k+1}}|}\displaystyle\int_{B_{\lambda^{k}}}|w-a_{\infty}|^{2}dx\right)^{\frac{1}{2}}\\
&=&\frac{1}{\lambda^{\frac{n}{2}}}\left(\frac{1}{|B_{\lambda^{k}}|}\displaystyle\int_{B_{\lambda^{k}}}|w-a_{\infty}|^{2}dx\right)^{\frac{1}{2}}\\
&\leq&C_{\lambda,\delta_{0}}\lambda^{k}+C_{\lambda,\delta_{0}}
\left(\lambda^{k}\int_{\lambda^{k}}^{1}\frac{\tilde{\omega}_{1}(s)}{s^{2}}ds
+\int_{0}^{\lambda^{k}}\frac{\tilde{\omega}_{1}(s)}{s}ds\right)\\
&&+\frac{C_{\lambda,\delta_{0}}}{A}
\left(\lambda^{k}\int_{\lambda^{k}}^{1}\frac{\tilde{\omega}_{2}(s)}{s^{2}}ds
+\int_{0}^{\lambda^{k}}\frac{\tilde{\omega}_{2}(s)}{s}ds\right)\\
&\leq&C_{\lambda,\delta_{0}}\frac{r}{\lambda}+C_{\lambda,\delta_{0}}
\left(\frac{r}{\lambda}
\int_{r}^{1}\frac{\tilde{\omega}_{1}(s)}{s^{2}}ds
+\frac{2}{\lambda}\int_{0}^{\frac{r}{\lambda}}\frac{\tilde{\omega}_{1}(\lambda s)}{s}ds\right)\\
&&+\frac{C_{\lambda,\delta_{0}}}{A}
\left(\frac{r}{\lambda}\int_{r}^{1}\frac{\tilde{\omega}_{2}(s)}{s^{2}}ds
+\frac{2}{\lambda}\int_{0}^{\frac{r}{\lambda}}\frac{\tilde{\omega}_{2}(\lambda s)}{s}ds\right)\\
&\leq&\bar{C}_{\lambda,\delta_{0}}r+\bar{C}_{\lambda,\delta_{0}}
\left(r
\int_{r}^{1}\frac{\tilde{\omega}_{1}(s)}{s^{2}}ds
+\int_{0}^{r}\frac{\tilde{\omega}_{1}(s)}{s}ds\right)\\
&&+\frac{\bar{C}_{\lambda,\delta_{0}}}{A}
\left(r\int_{r}^{1}\frac{\tilde{\omega}_{2}(s)}{s^{2}}ds
+\int_{0}^{r}\frac{\tilde{\omega}_{2}(s)}{s}ds\right).
\end{eqnarray*}
where the following inequalities are used above, which can be obtained by Remark $\ref{remark1.3}$,
$$\frac{\tilde{\omega}_{1}(s)}{s}\leq\frac{2\tilde{\omega}_{1}(\lambda s)}{\lambda s},\quad
\frac{\tilde{\omega}_{2}(s)}{s}\leq\frac{2\tilde{\omega}_{2}(\lambda s)}{\lambda s}.
$$
By L'Hospital principle, we have
$$\lim\limits_{r\rightarrow0}r\int_{r}^{1}\frac{\tilde{\omega}_{1}(s)}{s^{2}}ds
=\lim\limits_{r\rightarrow0}r\int_{r}^{1}\frac{\tilde{\omega}_{2}(s)}{s^{2}}ds=0.
$$
Hence
$$\lim\limits_{r\rightarrow0}\bar{C}_{\lambda,\delta_{0}}
\left(r+r
\int_{r}^{1}\frac{\tilde{\omega}_{1}(s)}{s^{2}}ds
+\int_{0}^{r}\frac{\tilde{\omega}_{1}(s)}{s}ds\right)=0,
$$
$$\lim\limits_{r\rightarrow0}\frac{\bar{C}_{\lambda,\delta_{0}}}{A}
\left(r\int_{r}^{1}\frac{\tilde{\omega}_{2}(s)}{s^{2}}ds
+\int_{0}^{r}\frac{\tilde{\omega}_{2}(s)}{s}ds\right)=0.
$$
This implies that $w$ is continuous at 0 in the $L^{2}$ sense.\\

{\bf{Step~5: Scaling back to $u$.}}\\

We notice that $w(x)=\displaystyle\frac{u(\bar{r}x)}{A}$ and $\tilde{\omega}_{i}(r)=\omega_i(\bar{r}r)$ for $i=1, 2$. By setting $K=Aa_{\infty}$, we have for any $0<r\leq 1$,
\begin{eqnarray*}
\left(\frac{1}{|B_{\bar{r}r}|}\displaystyle\int_{B_{\bar{r}r}}|u-K|^{2}dx\right)^{\frac{1}{2}}
&\leq&\bar{C}_{\lambda,\delta_{0}}A
\left(r+r\bar{r}
\int_{r\bar{r}}^{\bar{r}}\frac{\omega_{1}(s)}{s^{2}}ds
+\int_{0}^{r\bar{r}}\frac{\omega_{1}(s)}{s}ds\right)\\
&&+\bar{C}_{\lambda,\delta_{0}}
\left(r\bar{r}
\int_{r\bar{r}}^{\bar{r}}\frac{\omega_{2}(s)}{s^{2}}ds
+\int_{0}^{r\bar{r}}\frac{\omega_{2}(s)}{s}ds\right).
\end{eqnarray*}
Hence for any $0<r\leq\bar{r}\leq R$ we obtain
\begin{eqnarray*}
\left(\frac{1}{|B_{r}|}\displaystyle\int_{B_{r}}|u-K|^{2}dx\right)^{\frac{1}{2}}&\leq&
\bar{C}_{\lambda,\delta_{0}}A
\left(\frac{r}{\bar{r}}+r
\int_{r}^{\bar{r}}\frac{\omega_{1}(s)}{s^{2}}ds
+\int_{0}^{r}\frac{\omega_{1}(s)}{s}ds\right)\\
&&+\bar{C}_{\lambda,\delta_{0}}
\left(r
\int_{r}^{\bar{r}}\frac{\omega_{2}(s)}{s^{2}}ds
+\int_{0}^{r}\frac{\omega_{2}(s)}{s}ds\right).
\end{eqnarray*}
Due to
\begin{eqnarray*}
|K|\leq A|a_{\infty}|\leq LA\leq \bar{C}_{\lambda,\delta_{0}}A,
\end{eqnarray*}
it follows that for $\bar{r}<r\leq R$,
\begin{eqnarray*}
\left(\frac{1}{|B_{r}|}\displaystyle\int_{B_{r}}|u-K|^{2}dx\right)^{\frac{1}{2}}
\leq\left(\frac{1}{|B_{\bar{r}}|}\displaystyle\int_{B_{R}}|u|^{2}dx\right)^{\frac{1}{2}}+K\leq(L+1)A\leq \bar{C}_{\lambda,\delta_{0}}A.
\end{eqnarray*}
Thus we complete the proof of Theorem \ref{linfty}.
\
\

\begin{rem}
To prove Corollary $\ref{holder}$, we only need to set $\omega_{1}(r)=N_{1}r^{\alpha_{1}}$, $\omega_{2}(r)=N_{2}r^{\alpha_{2}}$ in Theorem $\ref{linfty}$. Similarly, by taking $\omega_{1}(r)=N_{1}r$, $\omega_{2}(r)=N_{2}r$, then Corollary $\ref{holder}$ follows.
\end{rem}

\section{Continuity of solutions under Kato conditions}

In this section, we will prove the local boundedness and continuity of solution for $(\ref{1})$ mainly under the assumptions that $V\in K_{\eta_{V}}(B_{1})$, $g\in K_{\eta_{g}}(B_{1})$ and $\vec{f}\in K^{1}_{\eta_{f}}(B_{1})\cap L^{2}(B_{1})^{n}$. To begin with, we introduce the definitions and some properties of Kato class and $K^{1}$ class.

\begin{lm}\label{Vex}
Assume $V\in K_{\eta}(B_{1})$. If $\tilde{V}$ is the zero extension of $V$ in $\mathbb{R}^{n}$, then $\tilde{V}\in K_{\eta}(\mathbb{R}^{n})$ and
\begin{equation}\label{extenV}
\|\tilde{V}\|_{K(\mathbb{R}^{n})}\leq \|V\|_{K(B_{1})}.
\end{equation}
\end{lm}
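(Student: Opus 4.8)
The plan is to reduce the statement to the elementary monotonicity of the Newtonian kernel — $|z|^{2-n}$ is radially decreasing for $n\ge 3$ — combined with the fact that metric projection onto the closed unit ball does not increase distances to points of that ball. Since $\tilde V$ vanishes outside $B_1$, for every $x\in\mathbb{R}^{n}$ and every $r>0$ one has
\begin{equation*}
\int_{B_{r}(x)}\frac{|\tilde V(y)|}{|x-y|^{n-2}}\,dy=\int_{B_{r}(x)\cap B_1}\frac{|V(y)|}{|x-y|^{n-2}}\,dy,\qquad\int_{\mathbb{R}^{n}}\frac{|\tilde V(y)|}{|x-y|^{n-2}}\,dy=\int_{B_1}\frac{|V(y)|}{|x-y|^{n-2}}\,dy,
\end{equation*}
so the whole content is to show that the suprema over $x\in\mathbb{R}^{n}$ of the right-hand sides are controlled by the corresponding suprema over $x\in B_1$, which equal $\eta(r)$ and $\|V\|_{K(B_1)}$ by Definition \ref{ka}.

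When $x\in B_1$ there is nothing to prove, so I would concentrate on $x\notin B_1$. Let $x'$ be the nearest point of $\overline{B_1}$ to $x$ (so $x'=x/|x|$ when $|x|>1$, and $x'=x$ when $|x|=1$). Convexity of $\overline{B_1}$ gives $\langle x-x',\,y-x'\rangle\le 0$ for all $y\in\overline{B_1}$, hence $|x-y|^{2}\ge|x-x'|^{2}+|x'-y|^{2}\ge|x'-y|^{2}$ for all $y\in B_1$. Therefore $B_{r}(x)\cap B_1\subseteq B_{r}(x')\cap B_1$ and $|x-y|^{2-n}\le|x'-y|^{2-n}$ on $B_1$, which yields at once
\begin{equation*}
\int_{B_{r}(x)\cap B_1}\frac{|V(y)|}{|x-y|^{n-2}}\,dy\le\int_{B_{r}(x')\cap B_1}\frac{|V(y)|}{|x'-y|^{n-2}}\,dy,
\end{equation*}
and likewise with $B_{r}$ replaced by $\mathbb{R}^{n}$.

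It remains to move the boundary point $x'$ back into the open ball, which is the only delicate point, arising because Definition \ref{ka} takes the supremum over the open ball $B_1$. I would set $x_{j}=(1-\tfrac1j)x'\in B_1$, note that $|x_{j}-y|\to|x'-y|$ and that $\mathbf{1}_{B_{r}(x_{j})\cap B_1}(y)\to\mathbf{1}_{B_{r}(x')\cap B_1}(y)$ for a.e.\ $y$ (the exceptional set lying in the null set $\partial B_{r}(x')$), and apply Fatou's lemma to obtain
\begin{equation*}
\int_{B_{r}(x')\cap B_1}\frac{|V(y)|}{|x'-y|^{n-2}}\,dy\le\liminf_{j\to\infty}\int_{B_{r}(x_{j})\cap B_1}\frac{|V(y)|}{|x_{j}-y|^{n-2}}\,dy\le\eta(r),
\end{equation*}
and the same argument with $\mathbb{R}^{n}$ in place of $B_{r}(x')$ bounds the full-space integral by $\|V\|_{K(B_1)}$. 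Alternatively, one could invoke the continuity up to $\overline{B_1}$ of $x\mapsto\int_{B_1}|V(y)|\,|x-y|^{2-n}\,dy$ in the spirit of Remark \ref{rem2.9}. Taking the supremum over $x\in\mathbb{R}^{n}$ then gives both $\tilde V\in K_{\eta}(\mathbb{R}^{n})$ and the norm inequality $(\ref{extenV})$. The only obstacle, such as it is, is this passage through $\partial B_1$; everything else is the one-line distance-monotonicity estimate.
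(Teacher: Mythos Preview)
Your proof is correct and follows essentially the same route as the paper: project an exterior point $x$ to $x'=x/|x|\in\partial B_1$, use the monotonicity $|x-y|\ge|x'-y|$ for $y\in B_1$, and then pass from $\partial B_1$ back to the open ball. The paper carries this out explicitly only for the norm inequality and invokes Remark~\ref{rem2.9} (continuity of $h$) for that last step, dismissing the $\eta(r)$ part with a ``Similarly''; your version is more complete there, since you also record the inclusion $B_r(x)\cap B_1\subseteq B_r(x')\cap B_1$ needed for the truncated integrals, and you give a self-contained Fatou argument as an alternative to citing Remark~\ref{rem2.9}.
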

\begin{proof}
In fact,
$$\|\tilde{V}\|_{K(\mathbb{R}^{n})}=\sup\limits_{x\in\mathbb{R}^{n}}\int_{\mathbb{R}^{n}}\frac{|\tilde{V}(y)|}{|x-y|^{n-2}}dy=\sup\limits_{x\in \mathbb{R}^{n}}\int_{B_{1}}\frac{|V(y)|}{|x-y|^{n-2}}dy.
$$
We only need to discuss the case $x\notin B_{1}$. For any fixed $x\notin B_{1}$, any $y\in B_{1}$, the following inequality holds,
$$\frac{1}{|x-y|^{n-2}}\leq\frac{1}{|\frac{x}{|x|}-y|^{n-2}}.
$$
Hence
$$\int_{B_{1}}\frac{|V(y)|}{|x-y|^{n-2}}dy\leq\int_{B_{1}}\frac{|V(y)|}{|\frac{x}{|x|}-y|^{n-2}}dy.
$$
Next we take the supreme over all $x\notin B_{1}$, then
\begin{eqnarray*}
\sup\limits_{x\notin B_{1}}\int_{B_{1}}\frac{|V(y)|}{|x-y|^{n-2}}dy&\leq&\sup\limits_{x\notin B_{1}}\int_{B_{1}}\frac{|V(y)|}{|\frac{x}{|x|}-y|^{n-2}}dy\\
&=&\sup\limits_{x\in\partial B_{1}}\int_{B_{1}}\frac{|V(y)|}{|x-y|^{n-2}}dy\\
&\leq&\sup\limits_{x\in B_{1}}\int_{B_{1}}\frac{|V(y)|}{|x-y|^{n-2}}dy,
\end{eqnarray*}
where Remark $\ref{rem2.9}$ is used in the last inequality. Combining with the case $x\in B_{1}$, we prove the inequality $(\ref{extenV})$. Similarly, we can calculate
$$\sup_{x\in\mathbb{R}^{n}}\int_{B_{r}(x)}\frac{|\tilde{V}(y)|}{|x-y|^{n-2}}dy\leq\eta(r),
$$
this implies that $\tilde{V}\in K_{\eta}(\mathbb{R}^{n})$.
\end{proof}
Based on this lemma, we have the following corollary of Lemma $\ref{lm5.4}$.
\begin{crl}\label{cor5.5}
Especially $\Omega=B_{1}$, $u\in H_{0}^{1}(B_{1})$, by extending $u$ and $V$ to zero outside $B_{1}$, then by Hardy inequality and Lemma $\ref{Vex}$, it is easy to get
\begin{equation}\label{37}
\int_{B_{1}}|V(x)|u^{2}(x)dx\leq C\left(\sup_{x\in B_{1}}\int_{B_{1}}\frac{|V(y)|}{|x-y|^{n-2}}dy\right)
\|\nabla u\|_{L^{2}(B_{1})}^{2}.
\end{equation}
\end{crl}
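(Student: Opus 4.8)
The plan is to deduce $(\ref{37})$ from Lemma~\ref{lm5.4} applied to the zero extensions of $u$ and $V$, and then to discard the lower-order $L^{2}$ term using Hardy's inequality. First I would extend $u$ by zero outside $B_{1}$. Since $u\in H_{0}^{1}(B_{1})$, the extension $\tilde{u}$ belongs to $H^{1}(\mathbb{R}^{n})$, with $\nabla\tilde{u}$ equal to the zero extension of $\nabla u$, so $\|\tilde{u}\|_{L^{2}(\mathbb{R}^{n})}=\|u\|_{L^{2}(B_{1})}$ and $\|\nabla\tilde{u}\|_{L^{2}(\mathbb{R}^{n})}=\|\nabla u\|_{L^{2}(B_{1})}$; in particular $\tilde{u}\in W^{1,2}_{\text{loc}}(\mathbb{R}^{n})$. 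Likewise let $\tilde{V}$ be the zero extension of $V$; by Lemma~\ref{Vex} we have $\tilde{V}\in K_{\eta}(\mathbb{R}^{n})$ and $\|\tilde{V}\|_{K(\mathbb{R}^{n})}\le\|V\|_{K(B_{1})}$. Since $\tilde{u}$ and $\tilde{V}$ both vanish outside $\overline{B_{1}}$, one has $\int_{B_{1}}|V|u^{2}\,dx=\int_{B_{1}(0)}|\tilde{V}|\tilde{u}^{2}\,dx$, and the restriction of $\tilde{V}$ to the bounded ball $B_{4}$ still lies in $K_{\eta}(B_{4})$.

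Next I would apply Lemma~\ref{lm5.4} on $\Omega=B_{4}$ with $x_{0}=0$, $R=1$ and $r=1$, for which the hypothesis $\overline{B_{R+2r}(x_{0})}=\overline{B_{3}}\subset B_{4}$ holds. This yields
\[
\int_{B_{1}}|\tilde{V}|\tilde{u}^{2}\,dx\le C\Big(\sup_{x\in B_{3}}\int_{B_{2}(x)}\frac{|\tilde{V}(y)|}{|x-y|^{n-2}}\,dy\Big)\big(\|\tilde{u}\|_{L^{2}(B_{2})}^{2}+\|\nabla\tilde{u}\|_{L^{2}(B_{2})}^{2}\big)\le C\|V\|_{K(B_{1})}\big(\|u\|_{L^{2}(B_{1})}^{2}+\|\nabla u\|_{L^{2}(B_{1})}^{2}\big),
\]
where in the last step the supremum is bounded by $\|\tilde{V}\|_{K(\mathbb{R}^{n})}\le\|V\|_{K(B_{1})}$ (Lemma~\ref{Vex}) and the two norms are rewritten in terms of $u$ using the identities from the first paragraph.

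Finally, since $u\in H_{0}^{1}(B_{1})$, Hardy's inequality (Theorem~4 of Section~5.8 in \cite{E2010}) gives $\int_{B_{1}}\frac{u^{2}}{|x|^{2}}\,dx\le C(n)\int_{B_{1}}|\nabla u|^{2}\,dx$, and since $|x|\le 1$ on $B_{1}$ this forces $\|u\|_{L^{2}(B_{1})}^{2}\le C(n)\|\nabla u\|_{L^{2}(B_{1})}^{2}$. Substituting this into the previous display gives $(\ref{37})$. I do not expect a genuine obstacle here; the only points requiring care are that the zero extension of an $H_{0}^{1}(B_{1})$ function is globally $H^{1}$ (hence $W^{1,2}_{\text{loc}}$, as needed to invoke Lemma~\ref{lm5.4}) and that $\tilde{V}$ retains its Kato modulus on a fixed large ball, both of which are immediate from Lemma~\ref{Vex} and the definition of $W^{1,2}$.
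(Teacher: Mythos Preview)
Your proposal is correct and follows exactly the route the paper indicates: extend $u$ and $V$ by zero (using Lemma~\ref{Vex} to keep $\tilde V$ in the Kato class), apply Lemma~\ref{lm5.4} on a larger ball, and then absorb the $\|u\|_{L^{2}}^{2}$ term into $\|\nabla u\|_{L^{2}}^{2}$ via Hardy's inequality. The paper gives no further detail beyond these hints, so your write-up is a faithful expansion of the intended argument; the only minor remark is that the final step $\|u\|_{L^{2}(B_{1})}\le C\|\nabla u\|_{L^{2}(B_{1})}$ is in fact just Poincar\'e's inequality for $H_{0}^{1}(B_{1})$, so invoking Hardy there is a slight over-kill (though perfectly valid and consistent with the paper's own phrasing).
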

In the following, we denote $\zeta$ be the standard mollifier, i.e. $\zeta\in C^{\infty}(\mathbb{R}^{n})$ defined by
\begin{eqnarray*}
\zeta(x)=\begin{cases}
C\exp(\frac{1}{|x|^{2}-1}), \quad &|x|<1,\\
0,\quad &|x|\geq1,\\
\end{cases}
\end{eqnarray*}
the constant $C$ is selected so that $\displaystyle\int_{\mathbb{R}^{n}}\zeta dx=1$. Then for each $\epsilon>0$, set
$$\zeta_{\epsilon}(x)=\frac{1}{\epsilon^{n}}\zeta\left(\frac{x}{\epsilon}\right),
$$
it follows that $\zeta_{\epsilon}\in C^{\infty}(\mathbb{R}^{n})$ and $\displaystyle\int_{\mathbb{R}^{n}}\zeta_{\epsilon} dx=1$.
\begin{lm}
Assume that $V\in K_{\eta}(B_{1})$ and $\tilde{V}$ is the zero extension of $V$ in $\mathbb{R}^{n}$, denote  $\tilde{V}$'s mollification by
$$\tilde{V}_{\epsilon}:=\zeta_{\epsilon}\ast \tilde{V}=\displaystyle\int_{\mathbb{R}^{n}}\zeta_{\epsilon}(x-y)\tilde{V}(y)dy,\quad x\in\mathbb{R}^{n}.
$$
Then $\tilde{V}_{\epsilon}\in K_{\eta}(\mathbb{R}^{n})$ and
\begin{equation}\label{Veps}
\|\tilde{V}_{\epsilon}\|_{K(\mathbb{R}^{n})}\leq\|\tilde{V}\|_{K(\mathbb{R}^{n})}\leq\|V\|_{K(B_{1})}.
\end{equation}
\end{lm}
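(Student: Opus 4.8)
The plan is to reduce the statement to the translation invariance of the Kato quantity, using nothing about $\zeta_{\epsilon}$ beyond $\zeta_{\epsilon}\geq 0$ and $\int_{\mathbb{R}^{n}}\zeta_{\epsilon}\,dx=1$; in particular the argument does not even use that $\zeta_{\epsilon}$ is radial. The first step is the elementary pointwise bound obtained by pulling the absolute value inside the convolution: for every $y\in\mathbb{R}^{n}$, after the change of variables $v=y-z$,
\[
|\tilde{V}_{\epsilon}(y)|=\Bigl|\int_{\mathbb{R}^{n}}\zeta_{\epsilon}(y-z)\tilde{V}(z)\,dz\Bigr|\leq\int_{\mathbb{R}^{n}}\zeta_{\epsilon}(v)\,|\tilde{V}(y-v)|\,dv=(\zeta_{\epsilon}\ast|\tilde{V}|)(y).
\]
Since $V\in L^{1}(B_{1})$ by Definition \ref{ka}, $\tilde{V}\in L^{1}(\mathbb{R}^{n})$ has compact support, so this convolution is finite a.e.\ and all the integrals below are legitimate.

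Next, fix $r>0$ and $x\in\mathbb{R}^{n}$. Combining the bound above with Tonelli's theorem (all integrands nonnegative) and then, for each fixed $v$, the substitution $w=y-v$ — under which $B_{r}(x)$ is carried onto $B_{r}(x-v)$ and $x-y$ onto $(x-v)-w$ — I obtain
\[
\int_{B_{r}(x)}\frac{|\tilde{V}_{\epsilon}(y)|}{|x-y|^{n-2}}\,dy\leq\int_{\mathbb{R}^{n}}\zeta_{\epsilon}(v)\left(\int_{B_{r}(x-v)}\frac{|\tilde{V}(w)|}{|(x-v)-w|^{n-2}}\,dw\right)dv.
\]
By Lemma \ref{Vex} we already know $\tilde{V}\in K_{\eta}(\mathbb{R}^{n})$, so the inner integral is $\leq\eta(r)$ for every $v$; hence the right-hand side is $\leq\eta(r)\int_{\mathbb{R}^{n}}\zeta_{\epsilon}(v)\,dv=\eta(r)$. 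Taking the supremum over $x\in\mathbb{R}^{n}$ gives $\tilde{V}_{\epsilon}\in K_{\eta}(\mathbb{R}^{n})$.

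Finally, running the identical computation with $B_{r}(x)$ replaced by all of $\mathbb{R}^{n}$ and invoking the norm bound of Lemma \ref{Vex} for the inner integral, I get
\[
\|\tilde{V}_{\epsilon}\|_{K(\mathbb{R}^{n})}=\sup_{x}\int_{\mathbb{R}^{n}}\frac{|\tilde{V}_{\epsilon}(y)|}{|x-y|^{n-2}}\,dy\leq\|\tilde{V}\|_{K(\mathbb{R}^{n})}\leq\|V\|_{K(B_{1})},
\]
which is the desired chain of inequalities. There is no genuine obstacle here; the only points requiring a word of care are the applicability of Tonelli (guaranteed by $\tilde{V}\in L^{1}$, the local integrability of the Newtonian kernel, and nonnegativity of all integrands) and the observation that the Kato quantities are invariant under the translation $x\mapsto x-v$, which is exactly what lets the $v$-integration collapse against $\int_{\mathbb{R}^{n}}\zeta_{\epsilon}=1$.
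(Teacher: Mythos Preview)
Your proof is correct and follows essentially the same approach as the paper: pull the absolute value inside the convolution, swap the order of integration by Tonelli, then use a translation substitution so that the inner integral becomes the Kato quantity of $\tilde{V}$ at a shifted center, uniformly bounded by $\eta(r)$ (resp.\ $\|\tilde{V}\|_{K(\mathbb{R}^{n})}$), after which the outer $\zeta_{\epsilon}$–integral collapses to $1$. Your write-up is in fact slightly more streamlined than the paper's, which performs two successive substitutions to reach the same conclusion.
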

\begin{proof}
This lemma can be shown by  some straight calculations. In fact, for any $x$ fixed,
\begin{eqnarray*}
\int_{B_{r}(x)}\frac{|\tilde{V}_{\epsilon}(y)|}{|x-y|^{n-2}}dy&=&\int_{B_{r}(x)}\frac{1}{|x-y|^{n-2}}\left|\int_{\mathbb{R}^{n}}
\zeta_{\epsilon}(z)\tilde{V}(y-z)dz\right|dy\\
&\leq&\int_{\mathbb{R}^{n}}\zeta_{\epsilon}(z)\left(\int_{B_{r}(x)}\frac{|\tilde{V}(y-z)|}{|x-y|^{n-2}}dy\right)dz\\
&\leq&\int_{\mathbb{R}^{n}}\zeta_{\epsilon}(z)\left(\int_{|z+t-x|\leq r}\frac{|\tilde{V}(t)|}{|x-z-t|^{n-2}}dt\right)dz\\
&\leq&\int_{B_{\epsilon}(x)}\zeta_{\epsilon}(x-y)\left(\int_{|t-y|\leq r}\frac{|\tilde{V}(t)|}{|y-t|^{n-2}}dt\right)dy.
\end{eqnarray*}
Hence
\begin{eqnarray*}
\sup\limits_{x\in\mathbb{R}^{n}}\int_{B_{r}(x)}\frac{|\tilde{V}_{\epsilon}(y)|}{|x-y|^{n-2}}dy\leq
\sup\limits_{x\in\mathbb{R}^{n}}\int_{B_{\epsilon}(x)}\zeta_{\epsilon}(x-y)\sup\limits_{y\in\mathbb{R}^{n}}\left(\int_{|t-y|\leq r}\frac{|\tilde{V}(t)|}{|y-t|^{n-2}}dt\right)dy\leq\eta(r).
\end{eqnarray*}
This leads to $\tilde{V}_{\epsilon}\in K_{\eta}(\mathbb{R}^{n})$ and $(\ref{Veps})$ also holds.
\end{proof}

For more properties of mollification, we refer the readers to Appendix C.5 in \cite{E2010}.
\begin{crl}
Assume $\vec{f}\in K^{1}_{\eta}(B_{1})$. If $\vec{\tilde{f}}=(\tilde{f}_{1},\tilde{f}_{2},\cdots,\tilde{f}_{n})$ is the zero extension of $\vec{f}$ in $\mathbb{R}^{n}$, denote $\vec{\tilde{f}}_{\epsilon}$ is the mollification of $\vec{\tilde{f}}$, i.e. for $1\leq i\leq n$,
$$\tilde{f}_{\epsilon,i}:=\zeta_{\epsilon}\ast \tilde{f}_{i}=\displaystyle\int_{\mathbb{R}^{n}}\zeta_{\epsilon}(x-y)\tilde{f}_{i}(y)dy,\quad x\in\mathbb{R}^{n}.
$$
Then $\vec{\tilde{f}}\in K^{1}_{\eta}(\mathbb{R}^{n})$, $\vec{\tilde{f}}_{\epsilon}\in K^{1}_{\eta}(\mathbb{R}^{n})$ and
\begin{equation}\label{extenvecf}
\|\vec{\tilde{f}}_{\epsilon}\|_{K^{1}(\mathbb{R}^{n})}\leq\|\vec{\tilde{f}}\|_{K^{1}(\mathbb{R}^{n})}\leq
\|\vec{f}\|_{K^{1}(B_{1})}.
\end{equation}
\end{crl}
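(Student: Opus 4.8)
The plan is to read this Corollary as the exponent-one analogue of Lemma \ref{Vex} and of the preceding lemma on the mollification of a Kato potential: one repeats those two arguments verbatim, with the kernel $|x-y|^{2-n}$ replaced throughout by $|x-y|^{1-n}$, the only genuinely new ingredient being that $\vec f$ is vector-valued, which is absorbed by Minkowski's integral inequality applied to the Euclidean norm. All integrands below are nonnegative, so Tonelli's theorem justifies every interchange of integrals; and $\vec f\in K^1_\eta(B_1)\subset L^1(B_1)$ (see Definition \ref{ka}) guarantees that the $K^1$-potentials involved are finite. I would establish the two inequalities in $(\ref{extenvecf})$ separately.

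\emph{Zero extension.} Fix $x\in\mathbb{R}^n$ and let $x^\ast$ be the nearest point of $\overline{B_1}$ to $x$, i.e. $x^\ast=x$ if $|x|\le1$ and $x^\ast=x/|x|$ otherwise. Convexity of $\overline{B_1}$ gives $|x^\ast-y|\le|x-y|$ for every $y\in\overline{B_1}$ --- exactly the elementary inequality used in the proof of Lemma \ref{Vex}. Since $\vec{\tilde f}$ vanishes outside $B_1$, this forces both $B_1\cap B_r(x)\subseteq B_1\cap B_r(x^\ast)$ and $|x-y|^{1-n}\le|x^\ast-y|^{1-n}$ on that set, whence
\[
\int_{B_r(x)}\frac{|\vec{\tilde f}(y)|}{|x-y|^{n-1}}\,dy\ \le\ \int_{B_1\cap B_r(x^\ast)}\frac{|\vec f(y)|}{|x^\ast-y|^{n-1}}\,dy,\qquad
\int_{\mathbb{R}^n}\frac{|\vec{\tilde f}(y)|}{|x-y|^{n-1}}\,dy\ \le\ \int_{B_1}\frac{|\vec f(y)|}{|x^\ast-y|^{n-1}}\,dy,
\]
with $x^\ast\in\overline{B_1}$. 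Taking the supremum over $x$ reduces both quantities to a supremum over $\overline{B_1}$, and one passes from $\overline{B_1}$ down to the open ball $B_1$ using the continuity up to $\partial B_1$ of the $K^1$-potential $x\mapsto\int_{B_1}|\vec f(y)|\,|x-y|^{1-n}\,dy$ and of its truncations $x\mapsto\int_{B_1\cap B_r(x)}|\vec f(y)|\,|x-y|^{1-n}\,dy$; this is the $\alpha=1$ version of Remark \ref{rem2.9}, obtained by the same equicontinuity argument as Theorem 4.15 of \cite{AS1982} from $\lim_{r\to0}\eta(r)=0$. Hence $\vec{\tilde f}\in K^1_\eta(\mathbb{R}^n)$ and $\|\vec{\tilde f}\|_{K^1(\mathbb{R}^n)}\le\|\vec f\|_{K^1(B_1)}$, which is the second inequality in $(\ref{extenvecf})$ and is the exact parallel of $(\ref{extenV})$.

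\emph{Mollification.} Componentwise, positivity of $\zeta_\epsilon$ gives $|\tilde f_{\epsilon,i}(y)|\le\int_{\mathbb{R}^n}\zeta_\epsilon(z)\,|\tilde f_i(y-z)|\,dz$ for each $i$; summing the squares over $i$ and applying Minkowski's integral inequality yields the pointwise bound $|\vec{\tilde f}_\epsilon(y)|\le\int_{\mathbb{R}^n}\zeta_\epsilon(z)\,|\vec{\tilde f}(y-z)|\,dz$. Insert this into $\int_{B_r(x)}|\vec{\tilde f}_\epsilon(y)|\,|x-y|^{1-n}\,dy$, interchange the order of integration, substitute $t=y-z$ in the inner integral and use $\int_{\mathbb{R}^n}\zeta_\epsilon=1$ --- the same chain of manipulations as in the derivation of $(\ref{Veps})$ --- to obtain
\[
\sup_{x\in\mathbb{R}^n}\int_{B_r(x)}\frac{|\vec{\tilde f}_\epsilon(y)|}{|x-y|^{n-1}}\,dy\ \le\ \sup_{x\in\mathbb{R}^n}\int_{B_r(x)}\frac{|\vec{\tilde f}(y)|}{|x-y|^{n-1}}\,dy\ \le\ \eta(r),
\]
and the same computation with $B_r(x)$ replaced by $\mathbb{R}^n$ gives $\|\vec{\tilde f}_\epsilon\|_{K^1(\mathbb{R}^n)}\le\|\vec{\tilde f}\|_{K^1(\mathbb{R}^n)}$. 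Thus $\vec{\tilde f}_\epsilon\in K^1_\eta(\mathbb{R}^n)$, and combining with the previous step closes the chain $(\ref{extenvecf})$.

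Both steps are essentially a transcription of the scalar case, so the only points needing care are the Euclidean-norm estimate, which is nothing but Minkowski's integral inequality, and the passage from $\overline{B_1}$ to $B_1$ in the zero-extension part. I expect the latter --- checking that the continuity-of-potential argument of \cite{AS1982} still runs with the kernel $|x-y|^{1-n}$ in place of $|x-y|^{2-n}$ --- to be the only (minor) obstacle; it goes through because the defining condition $\eta(r)\to0$ of $K^1_\eta$ makes the singular part of the potential uniformly small, exactly as for ordinary Kato class.
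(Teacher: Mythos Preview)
Your proposal is correct and is precisely the approach the paper intends: the corollary is stated without proof because it follows by transcribing the arguments of Lemma~\ref{Vex} and the subsequent mollification lemma with the kernel exponent $n-2$ replaced by $n-1$. Your use of Minkowski's integral inequality to pass from componentwise bounds to the Euclidean norm $|\vec{\tilde f}_\epsilon|$ is the one extra ingredient needed in the vector-valued setting, and the projection/continuity argument for the boundary supremum is exactly the $K^1$ analogue of what the paper does for Kato class.
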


%
Next, we begin to prove the continuity of solutions, the main tools are the following existence results.
\begin{lm}\label{lm5.8}
For any $g\in K_{\eta_{g}}(B_{1})$, $\vec{f}\in K^{1}_{\eta_{f}}(B_{1})\cap L^{2}(B_{1})^{n}$, there exists a unique solution in $W_{0}^{1,2}(B_{1})\cap C(\overline{B}_{1})$ of
\begin{equation}\label{keq1}
\left\{
\begin{array}{rcll}
-\Delta u&=&-\emph{div}\vec{f}+g,\qquad&\text{in}~~ B_{1},\\
u&=&0,\qquad&\text{on}~~\partial B_{1}.
\end{array}
\right.
\end{equation}
Furthermore, the solution can be represented as
\begin{eqnarray*}
u(x)=\left\{
\begin{array}{rcll}
&\displaystyle\int_{B_{1}}\nabla_{y}G(x,y)\vec{f}(y)+G(x,y)g(y)dy,\qquad&x\in B_{1},\\
&0,\qquad&x\in\partial B_{1},
\end{array}
\right.
\end{eqnarray*}
where $G(x,y)$ is the Green's function for the operator $-\Delta$ with zero Dirichlet boundary value on $\partial B_{1}$, and the following estimate holds:
\begin{equation}\label{uest}
\|u\|_{L^{\infty}(B_{1})}+\|\nabla u\|_{L^{2}(B_{1})}\leq
C\left(\|\vec{f}\|_{L^{2}(B_{1})}+\|g\|_{L^{1}(B_{1})}+\|\vec{f}\|_{K^{1}(B_{1})}+\|g\|_{K(B_{1})}\right).
\end{equation}
\end{lm}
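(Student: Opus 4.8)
\textbf{Proof proposal for Lemma \ref{lm5.8}.}

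The plan is to obtain the solution by approximation, prove uniform estimates, pass to the limit, and then verify the Green's representation and the $L^\infty$-bound. First I would mollify the data: set $g_\epsilon = \zeta_\epsilon * \tilde g$ and $\vec f_\epsilon = \zeta_\epsilon * \vec{\tilde f}$, which are smooth, and solve the classical Dirichlet problem $-\Delta u_\epsilon = -\text{div}\vec f_\epsilon + g_\epsilon$ in $B_1$, $u_\epsilon = 0$ on $\partial B_1$; existence and smoothness of $u_\epsilon$ are standard. Testing the equation with $u_\epsilon$ itself and using the Green's representation $u_\epsilon(x) = \int_{B_1} \nabla_y G(x,y)\cdot \vec f_\epsilon(y) + G(x,y) g_\epsilon(y)\,dy$, one controls $\|\nabla u_\epsilon\|_{L^2(B_1)}$ by $\|\vec f_\epsilon\|_{L^2(B_1)} + \|g_\epsilon\|_{L^{2n/(n+2)}(B_1)}$, hence by $\|\vec f\|_{L^2(B_1)} + \|g\|_{L^1(B_1)}$ (the second embedding uses that on a bounded domain $L^1 \hookrightarrow W^{-1,2}$ is \emph{not} true, so more carefully I would bound $|\langle g_\epsilon, u_\epsilon\rangle|$ directly via the Green kernel, writing $u_\epsilon = G * (\cdots)$ and estimating $\int G(x,y)|g_\epsilon(y)|\,dy \le \|g_\epsilon\|_{K(B_1)}$ pointwise, which then couples with $\|\nabla u_\epsilon\|_{L^2}$ through a Cauchy--Schwarz and absorption argument).

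The heart of the argument is the uniform $L^\infty$-bound. Using the representation formula and the pointwise kernel estimates $G(x,y) \le C|x-y|^{2-n}$ and $|\nabla_y G(x,y)| \le C|x-y|^{1-n}$, I would estimate, for each $x \in B_1$,
\begin{eqnarray*}
|u_\epsilon(x)| &\le& C\int_{B_1} \frac{|\vec f_\epsilon(y)|}{|x-y|^{n-1}}\,dy + C\int_{B_1}\frac{|g_\epsilon(y)|}{|x-y|^{n-2}}\,dy\\
&\le& C\|\vec f_\epsilon\|_{K^1(B_1)} + C\|g_\epsilon\|_{K(B_1)}\\
&\le& C\left(\|\vec f\|_{K^1(B_1)} + \|g\|_{K(B_1)}\right),
\end{eqnarray*}
where the last step uses the mollification bounds $(\ref{extenvecf})$ and $(\ref{Veps})$ together with Lemma \ref{Vex}. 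This gives $\|u_\epsilon\|_{L^\infty(B_1)}$ bounded independently of $\epsilon$. Combining with the $W^{1,2}_0$-bound, a subsequence $u_\epsilon \rightharpoonup u$ weakly in $W^{1,2}_0(B_1)$ and weak-$*$ in $L^\infty(B_1)$; since $\vec f_\epsilon \to \vec f$ in $L^2(B_1)^n$ and $g_\epsilon \to g$ in the sense of the Kato-class pairing (one checks $\langle g_\epsilon, \varphi\rangle \to \int g\varphi$ for $\varphi \in W^{1,2}_0(B_1)$ using Lemma \ref{lm5.4}/Corollary \ref{cor5.5} to bound $\int |g_\epsilon - g||\varphi|$), the limit $u$ is a weak solution of $(\ref{keq1})$, lies in $W^{1,2}_0(B_1) \cap L^\infty(B_1)$, and inherits the estimate $(\ref{uest})$.

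It remains to upgrade $u$ to $C(\overline B_1)$, establish the Green's representation, and prove uniqueness. For continuity I would show the representation $u(x) = \int_{B_1} \nabla_y G(x,y)\cdot \vec f(y) + G(x,y)g(y)\,dy$ holds by passing to the limit in the corresponding identity for $u_\epsilon$ (dominated convergence, using that the kernels are locally integrable of the relevant orders), and then deduce continuity of the right-hand side: the $K^1$ and $K$ conditions say exactly that the tails $\int_{B_r(x)}|x-y|^{1-n}|\vec f| + |x-y|^{2-n}|g|\,dy$ are uniformly small (bounded by $\eta_f(r) + \eta_g(r)$), while the contribution from $B_1 \setminus B_r(x)$ depends continuously on $x$ since there the kernels are continuous and the densities are $L^1$; the Dirichlet boundary behavior follows from the boundary decay of $G$. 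Uniqueness is immediate: the difference $w$ of two solutions lies in $W^{1,2}_0(B_1)$ and satisfies $\int_{B_1}|\nabla w|^2 = 0$. The main obstacle I anticipate is the careful handling of the low-integrability right-hand side $g \in L^1 \cap K$ — in particular justifying that $\langle g_\epsilon, \varphi\rangle$ converges to the correct pairing and that the Kato norm genuinely controls $\int G(x,y)|g(y)|\,dy$ uniformly in $x$ up to the boundary; this is where Lemma \ref{Vex}, Corollary \ref{cor5.5}, and the mollification estimates must be invoked with care.
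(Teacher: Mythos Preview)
Your approach is essentially the paper's: mollify the data, solve the smooth Dirichlet problems, use the Green representation and the kernel bounds $|G|\le C|x-y|^{2-n}$, $|\nabla_y G|\le C|x-y|^{1-n}$ together with $(\ref{Veps})$ and $(\ref{extenvecf})$ to get a uniform $L^\infty$ bound, derive a uniform $W^{1,2}_0$ bound, and pass to the limit. Two points where the paper is cleaner than your sketch are worth noting. First, for the energy estimate you struggle with the $g$-term; the paper simply uses the $L^\infty$ bound on $u_\epsilon$ \emph{already obtained} and writes $\int_{B_1} u_\epsilon \tilde g_\epsilon \le \|u_\epsilon\|_{L^\infty}\|\tilde g_\epsilon\|_{L^1}$, which immediately gives $(\ref{uest})$ without any delicate pairing argument. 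Second, rather than passing first to a weak limit and then upgrading to continuity, the paper defines $\bar u$ directly by the Green integral with the original $\vec f,g$ and shows $u_\epsilon\to\bar u$ \emph{uniformly} on $\overline B_1$ by the near/far splitting you describe (near part small by $\eta_f(r)+\eta_g(r)$, far part by $L^1$-convergence of the mollifications with bounded kernels); this gives $\bar u\in C(\overline B_1)$ at once, and the a.e.\ identification with the $W^{1,2}_0$ weak limit $u$ is then immediate. Your route works too, but the paper's ordering avoids the separate ``upgrade to continuity'' step and the worry about $\langle g_\epsilon,\varphi\rangle$ converging (for $\varphi\in C_0^\infty$ this is just $L^1$-convergence of $\tilde g_\epsilon$ against a bounded test function).
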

\begin{proof}
We denote $\vec{\tilde{f}},\tilde{g}$ are the zero extension of $\vec{f}$ and $g$ in $\mathbb{R}^{n}$ respectively, $\vec{\tilde{f}}_{\epsilon},\tilde{g}_{\epsilon}$ are the mollification of $\vec{\tilde{f}},\tilde{g}$. By the properties of mollification, $\vec{\tilde{f}}_{\epsilon}\in C^{\infty}(\mathbb{R}^{n})^{n}$, $\tilde{g}_{\epsilon}\in C^{\infty}(\mathbb{R}^{n})$ and $\vec{\tilde{f}}_{\epsilon}\rightarrow \vec{\tilde{f}}$ in $L^{2}_{\text{loc}}(\mathbb{R}^{n})^{n}$, $\tilde{g}_{\epsilon}\rightarrow \tilde{g}$ in $L^{1}_{\text{loc}}(\mathbb{R}^{n})$ as $\epsilon\rightarrow0$. Now we consider $\{u_{\epsilon}\}$ are the solutions of the following Dirichlet problems:
\begin{equation}\label{uep}
\left\{
\begin{array}{rcll}
-\Delta u_{\epsilon}&=&-\text{div}\vec{\tilde{f}}_{\epsilon}+\tilde{g}_{\epsilon},\qquad&\text{in}~~ B_{1},\\
u_{\epsilon}&=&0,\qquad&\text{on}~~\partial B_{1}.
\end{array}
\right.
\end{equation}
By the classical regularity result of elliptic equations, it follows that $u_{\epsilon}\in C^{\infty}(\overline{B}_{1})$ for any $\epsilon>0$. Moreover, $u_{\epsilon}$ can be represented as
\begin{eqnarray*}
u_{\epsilon}(x)&=&\int_{B_{1}}G(x,y)(-\text{div}\vec{\tilde{f}}_{\epsilon}(y)+\tilde{g}_{\epsilon}(y))dy\\
&=&\int_{B_{1}}\nabla_{y}G(x,y)\vec{\tilde{f}}_{\epsilon}(y)+G(x,y)\tilde{g}_{\epsilon}(y)dy.
\end{eqnarray*}
Since $|\nabla_{y}G(x,y)|\leq \displaystyle\frac{C}{|x-y|^{n-1}}$, $|G(x,y)|\leq \displaystyle\frac{C}{|x-y|^{n-2}}$, then we have for any $\epsilon>0$,
\begin{equation}\label{uepli}
\begin{aligned}
\|u_{\epsilon}\|_{L^{\infty}(B_{1})}&\leq C\left(\sup\limits_{x\in\mathbb{R}^{n}}\int_{\mathbb{R}^{n}}\frac{|\vec{\tilde{f}}_{\epsilon}(y)|}{|x-y|^{n-1}}dy
+\sup\limits_{x\in\mathbb{R}^{n}}\int_{\mathbb{R}^{n}}\frac{|\tilde{g}_{\epsilon}(y)|}{|x-y|^{n-2}}dy\right)\\
&\leq C\left(\|\vec{f}\|_{K^{-1}(B_{1})}+\|g\|_{K(B_{1})}\right).
\end{aligned}
\end{equation}
We set
\begin{eqnarray*}
\bar{u}(x)=\left\{
\begin{array}{rcll}
&\displaystyle\int_{B_{1}}\nabla_{y}G(x,y)\vec{f}(y)+G(x,y)g(y)dy,\qquad&x\in B_{1},\\
&0,\qquad&x\in\partial B_{1}.
\end{array}
\right.
\end{eqnarray*}
Since $g\in K_{\eta_{g}}(B_{1})$, $\vec{f}\in K^{1}_{\eta_{f}}(B_{1})\cap L^{2}(B_{1})^{n}$, by the definition of Kato class and $K^{1}$ class, we know that $$\lim\limits_{r\rightarrow0}(\eta_{f}(r)+\eta_{g}(r))=0.$$
It follows that, for any $\varepsilon>0$, there exists small $r_{0}$, such that for $r\leq r_{0}$,
\begin{equation}\label{etafg}
\eta_{f}(r)+\eta_{g}(r)\leq\varepsilon.
\end{equation}
Then by a straightforward calculation, for any $x\in B_{1}$,
\begin{eqnarray*}
u_{\epsilon}(x)-\bar{u}(x)&=&\int_{B_{1}}\nabla_{y}G(x,y)(\vec{\tilde{f}}_{\epsilon}(y)-\vec{f}(y))+G(x,y)(\tilde{g}_{\epsilon}(y)-g(y))dy\\
&=&\int_{B_{r_{0}}(x)}+\int_{B_{1}\setminus B_{r_{0}}(x)}\nabla_{y}G(x,y)(\vec{\tilde{f}}_{\epsilon}(y)-\vec{f}(y))+G(x,y)(\tilde{g}_{\epsilon}(y)-g(y))dy\\
&\triangleq& I_{1}+I_{2}.
\end{eqnarray*}
By $(\ref{etafg})$, it follows that for any $x\in B_{1}$,
$$|I_{1}|\leq C\varepsilon.
$$
For $I_{2}$, we have
$$|I_{2}|\leq\frac{C}{r_{0}^{n-1}}\left(\|\vec{\tilde{f}}_{\epsilon}-\vec{f}\|_{L^{1}(B_{1})}+\|\tilde{g}_{\epsilon}-g\|_{L^{1}(B_{1})}\right).
$$
Since $\vec{\tilde{f}}_{\epsilon}\rightarrow \vec{\tilde{f}}$ in $L^{2}_{\text{loc}}(\mathbb{R}^{n})^{n}$, $\tilde{g}_{\epsilon}\rightarrow \tilde{g}$ in $L^{1}_{\text{loc}}(\mathbb{R}^{n})$ as $\epsilon\rightarrow0$, then there exists small $\epsilon_{0}>0$, such that for $\epsilon\leq\epsilon_{0}$,
$$\|\vec{\tilde{f}}_{\epsilon}-\vec{f}\|_{L^{1}(B_{1})}\leq\varepsilon r_{0}^{n-1},\quad
\|\tilde{g}_{\epsilon}-g\|_{L^{1}(B_{1})}\leq\varepsilon r_{0}^{n-1}.
$$
Substituting the above inequalities into $|I_{2}|$, it follows that for any $x\in B_{1}$,
$$|I_{2}|\leq 2C\varepsilon.
$$
Combining the estimates of both $I_{1}$ and $I_{2}$ with $u_{\epsilon}=\bar{u}=0$ on $\partial B_{1}$, then we have for any $x\in \overline{B}_{1}$,
$$|u_{\epsilon}(x)-\bar{u}(x)|\leq|I_{1}|+|I_{2}|\leq C\varepsilon.
$$
This means $u_{\epsilon}(x)$ converge to $\bar{u}(x)$ uniformly in $\overline{B}_{1}$. Since $u_{\epsilon}\in C^{\infty}(\overline{B}_{1})$, it follows that $\overline{u}\in C({\overline{B}_{1}})$.

Next, we consider the convergence of $u_{\epsilon}$. Since $u_{\epsilon}\in C^{\infty}(\overline{B}_{1})$ is the solution of $(\ref{uep})$, then for sufficiently small $\epsilon>0$, using $(\ref{uepli})$,
\begin{eqnarray*}
\int_{B_{1}}|\nabla u_{\epsilon}|^{2}dx&=& \int_{B_{1}}\nabla u_{\epsilon}\cdot \vec{\tilde{f}}_{\epsilon}+u_{\epsilon}\tilde{g}_{\epsilon}dx\\
&\leq&2\int_{B_{1}}|\vec{\tilde{f}}_{\epsilon}|^{2}dx+\frac{1}{2}\int_{B_{1}}|\nabla u_{\epsilon}|^{2}dx+\|\tilde{g}_{\epsilon}\|_{L^{1}(B_{1})}\|u_{\epsilon}\|_{L^{\infty}(B_{1})}\\
&\leq&2\int_{B_{1}}|\vec{f}|^{2}dx+\frac{1}{2}\int_{B_{1}}|\nabla u_{\epsilon}|^{2}dx+C\|g\|_{L^{1}(B_{1})}\left(\|\vec{f}\|_{K^{1}(B_{1})}+\|g\|_{K(B_{1})}\right).
\end{eqnarray*}
Hence we have for sufficiently small $\epsilon>0$,
\begin{equation}\label{uepen}
\begin{aligned}
\int_{B_{1}}|\nabla u_{\epsilon}|^{2}dx
&\leq4\int_{B_{1}}|\vec{f}|^{2}dx+2C\|g\|_{L^{1}(B_{1})}\left(\|\vec{f}\|_{K^{1}(B_{1})}+\|g\|_{K(B_{1})}\right)\\
&\leq4\|\vec{f}\|_{L^{2}(B_{1})}^{2}+\|g\|^{2}_{L^{1}(B_{1})}+C\left(\|\vec{f}\|_{K^{1}(B_{1})}+\|g\|_{K(B_{1})}\right)^{2}.
\end{aligned}
\end{equation}
It follows that $\{u_{\epsilon}\}$ has a subsequence, we still denote it by $\{u_{\epsilon}\}$, such that
$$u_{\epsilon}\rightharpoonup u\quad\text{in}~W_0^{1,2}(B_{1}),~~\text{as}~~\epsilon\rightarrow0,
$$
$$u_{\epsilon}\rightarrow u\quad\text{in}~L^{2}(B_{1}),~~\text{as}~~\epsilon\rightarrow0.
$$
We claim $u\in W_0^{1,2}(B_{1})$ is a solution of
$$
-\Delta u=-\text{div}\vec{f}+g,\qquad\text{in}~~ B_{1}.
$$
In fact, for any fixed $\varphi\in C_{0}^{\infty}(B_{1})$,
\begin{equation}\label{uepd}\int_{B_{1}}\nabla u_{\epsilon}\cdot\nabla\varphi dx=\int_{B_{1}}\vec{\tilde{f}}_{\epsilon}\cdot\nabla\varphi+\tilde{g}_{\epsilon}\cdot\varphi dx.
\end{equation}
Using H\"{o}lder inequality,
$$\left|\int_{B_{1}}(\vec{\tilde{f}}_{\epsilon}-\vec{f})\cdot\nabla\varphi dx\right|\leq C\|\vec{\tilde{f}}_{\epsilon}-\vec{f}\|_{L^{2}(B_{1})}\rightarrow0,~~\text{as}~~\epsilon\rightarrow0,
$$
$$\left|\int_{B_{1}}(\tilde{g}_{\epsilon}-g)\varphi dx\right|\leq C\|\tilde{g}_{\epsilon}-g\|_{L^{1}(B_{1})}\rightarrow0,~~\text{as}~~\epsilon\rightarrow0.
$$
Then let $\epsilon\rightarrow0$ in $(\ref{uepd})$, we have
$$\int_{B_{1}}\nabla u\cdot\nabla\varphi dx=\int_{B_{1}}\vec{f}\cdot\nabla\varphi+g\cdot\varphi dx,
$$
which implies the claim. Putting $(\ref{uepli})$ and $(\ref{uepen})$ together, we obtain
\begin{equation}
\|u_{\epsilon}\|_{L^{\infty}(B_{1})}+\|\nabla u_{\epsilon}\|_{L^{2}(B_{1})}\leq
C\left(\|\vec{f}\|_{L^{2}(B_{1})}+\|g\|_{L^{1}(B_{1})}+\|\vec{f}\|_{K^{1}(B_{1})}+\|g\|_{K(B_{1})}\right).
\end{equation}
Since $u_{\epsilon}\rightarrow u$ in $L^{2}(B_{1})$, as $\epsilon\rightarrow0$, this implies that $u_{\epsilon}\rightarrow u$ a.e. in $B_{1}$. On the other hand, since $u_{\epsilon}(x)$ converge to $\bar{u}(x)$ uniformly in $\overline{B}_{1}$, as $\epsilon\rightarrow0$, thus we have that $\bar{u}=u$ in $B_{1}$. This implies that  $\bar{u}\in W_{0}^{1,2}(B_{1})\cap C(\overline{B}_{1})$ is the solution of $(\ref{keq1})$ and satisfies (\ref{uest}).
\end{proof}

\begin{thm}\label{thm5.9}
For any $g\in K_{\eta_{g}}(B_{1})$, $\vec{f}\in K^{1}_{\eta_{f}}(B_{1})\cap L^{2}(B_{1})^{n}$, $V\in K_{\eta_{V}}(B_{1})$ with
$$\|V\|_{K(B_{1})}=\sup\limits_{x\in B_{1}}\int_{B_{1}}\frac{|V(y)|}{|x-y|^{n-2}}dy\leq\delta
$$
for some $\delta$ sufficiently small, then there exists a $W_{0}^{1,2}(B_{1})\cap C(\overline{B}_{1})$ solution of
\begin{equation}\label{keq2}
\left\{
\begin{array}{rcll}
-\Delta u+Vu&=&-\emph{div}\vec{f}+g,\qquad&\text{in}~~ B_{1},\\
u&=&0,\qquad&\text{on}~~\partial B_{1},
\end{array}
\right.
\end{equation}
with the estimate
\begin{equation}\label{dves}
\|u\|_{L^{\infty}(B_{1})}+\|\nabla u\|_{L^{2}(B_{1})}\leq
C\left(\|\vec{f}\|_{L^{2}(B_{1})}+\|\vec{f}\|_{K^{1}(B_{1})}+\|g\|_{K(B_{1})}\right),
\end{equation}
where $C$ is a constant depending on $n$ and $\|V\|_{K(B_{1})}$.
\end{thm}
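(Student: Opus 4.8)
The plan is to solve \eqref{keq2} by a fixed-point argument, using Lemma \ref{lm5.8} to invert $-\Delta$ and exploiting the smallness of $\|V\|_{K(B_1)}$ to produce a contraction. First I would record two elementary facts on $B_1$: for any $h$ one has $\|h\|_{L^1(B_1)}\le 2^{n-2}\|h\|_{K(B_1)}$, because $|x-y|^{n-2}\le 2^{n-2}$ for $x,y\in B_1$; and for any $w\in C(\overline{B}_1)$ the function $Vw$ again lies in the Kato class, with modulus $\|w\|_{L^\infty(B_1)}\,\eta_V$, since
\[
\sup_{x\in B_1}\int_{B_1\cap B_r(x)}\frac{|V(y)\,w(y)|}{|x-y|^{n-2}}\,dy\le \|w\|_{L^\infty(B_1)}\,\eta_V(r)\longrightarrow 0\quad(r\to 0).
\]
Consequently Lemma \ref{lm5.8}, applied with right-hand side data $\vec f$ and $g-Vw$ (which belongs to the Kato class with modulus $\eta_g+\|w\|_{L^\infty(B_1)}\eta_V$), produces a \emph{unique} $Tw\in W_0^{1,2}(B_1)\cap C(\overline{B}_1)$ solving $-\Delta(Tw)=-\text{div}\,\vec f+g-Vw$ in $B_1$ with $Tw=0$ on $\partial B_1$ and satisfying \eqref{uest}. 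This defines a map $T\colon C(\overline{B}_1)\to C(\overline{B}_1)$; any fixed point $u$ of $T$ automatically lies in $W_0^{1,2}(B_1)\cap C(\overline{B}_1)$ and, upon rewriting $-\Delta u=-\text{div}\,\vec f+g-Vu$, is a weak solution of \eqref{keq2}, the pairing $\int_{B_1}Vu\varphi\,dx$ being finite and continuous in $\varphi\in W_0^{1,2}(B_1)$ by Corollary \ref{cor5.5} and Cauchy--Schwarz, so that one may pass from $C_0^\infty$ to $W_0^{1,2}(B_1)$ test functions.

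Second, I would verify that $T$ is a contraction on the complete space $(C(\overline{B}_1),\|\cdot\|_{L^\infty})$. As $T$ is affine, $Tw_1-Tw_2$ is the solution furnished by Lemma \ref{lm5.8} with data $\vec f=0$ and $g=V(w_2-w_1)$, so by \eqref{uest} and the two facts above,
\[
\|Tw_1-Tw_2\|_{L^\infty(B_1)}\le C_n\big(\|V(w_2-w_1)\|_{L^1(B_1)}+\|V(w_2-w_1)\|_{K(B_1)}\big)\le C_n'\,\|V\|_{K(B_1)}\,\|w_1-w_2\|_{L^\infty(B_1)}\le C_n'\,\delta\,\|w_1-w_2\|_{L^\infty(B_1)}.
\]
Choosing $\delta$ small enough that $C_n'\delta\le\tfrac12$, the Banach fixed-point theorem gives a unique $u\in C(\overline{B}_1)$ with $u=Tu$; by the previous paragraph $u\in W_0^{1,2}(B_1)\cap C(\overline{B}_1)$ and $u$ is a weak solution of \eqref{keq2}.

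Third, for the a priori bound \eqref{dves} I would feed $u=Tu$ back into \eqref{uest} with data $\vec f$ and $g-Vu$:
\[
\|u\|_{L^\infty(B_1)}+\|\nabla u\|_{L^2(B_1)}\le C_n\Big(\|\vec f\|_{L^2(B_1)}+\|\vec f\|_{K^1(B_1)}+\|g-Vu\|_{L^1(B_1)}+\|g-Vu\|_{K(B_1)}\Big).
\]
Since $\|g-Vu\|_{K(B_1)}\le\|g\|_{K(B_1)}+\delta\|u\|_{L^\infty(B_1)}$ and $\|g-Vu\|_{L^1(B_1)}\le 2^{n-2}\big(\|g\|_{K(B_1)}+\delta\|u\|_{L^\infty(B_1)}\big)$, the contribution $C_n\delta\|u\|_{L^\infty(B_1)}$ is absorbed into the left-hand side once $\delta$ is small, leaving \eqref{dves} with a constant depending only on $n$ (hence on $n$ and $\|V\|_{K(B_1)}$).

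I expect the only genuine obstacle to be confirming that the perturbed right-hand side $g-Vw$ (and, at the final step, $g-Vu$) really lies in a Kato class with modulus tending to $0$, so that Lemma \ref{lm5.8}---whose proof runs through mollification and a uniform-convergence argument---applies to it verbatim, and, in tandem with this, fixing a single threshold for $\delta$ that simultaneously forces the contraction constant below $1$ and legitimizes the absorption step in the estimate. Both points reduce to the monotone bound $\|Vw\|_{K(B_1)}\le\|w\|_{L^\infty(B_1)}\,\|V\|_{K(B_1)}$ and the comparison $\|\cdot\|_{L^1(B_1)}\le 2^{n-2}\|\cdot\|_{K(B_1)}$; they are routine once stated, but they are precisely where the Kato-class hypotheses are used.
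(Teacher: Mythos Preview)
Your argument is correct and follows the same fixed-point scheme as the paper: define $T$ via Lemma~\ref{lm5.8} applied to the right-hand side $-\text{div}\vec f+g-Vw$, show $T$ contracts when $\|V\|_{K(B_1)}$ is small, and read off \eqref{dves} by absorbing the $Vu$ contribution. The only tactical differences are that the paper runs the contraction on $W_0^{1,2}(B_1)\cap C(\overline B_1)$ with the combined norm $\|\cdot\|_{L^\infty}+\|\nabla\cdot\|_{L^2}$ (using Corollary~\ref{cor5.5} to bound $\|V(u_1-u_2)\|_{L^1}$ by $C\|V\|_K\|\nabla(u_1-u_2)\|_{L^2}$), whereas you contract on $(C(\overline B_1),\|\cdot\|_{L^\infty})$ alone and replace that step by the elementary comparison $\|\cdot\|_{L^1(B_1)}\le 2^{n-2}\|\cdot\|_{K(B_1)}$; your route is slightly leaner and avoids the gradient in the contraction step.
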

\begin{proof}
We prove this theorem by the fixed point theorem. Consider $w$ is a solution of
\begin{equation}\label{w}
\left\{
\begin{array}{rcll}
-\Delta w&=&-Vu-\text{div}\vec{f}+g,\qquad&\text{in}~~ B_{1},\\
w&=&0,\qquad&\text{on}~~\partial B_{1}.
\end{array}
\right.
\end{equation}
It is easy to see if $u\in C(\overline{B}_{1})$ with $\|u\|_{L^{\infty}(B_{1})}\leq A$ for some $A>0$, then $Vu$ still belongs to Kato class, i.e. $V\in K_{A\eta_{V}}(B_{1})$. Then by Lemma $\ref{lm5.8}$, it follows that there exists a unique solution $w\in W_{0}^{1,2}(B_{1})\cap C(\overline{B}_{1})$ of $(\ref{w})$. We set a mapping from $W_{0}^{1,2}(B_{1})\cap C(\overline{B}_{1})$ to itself:
$$T:~u\rightarrow w.
$$
Using the inequality $(\ref{uest})$ and Corollary \ref{cor5.5}, for $u_{1},u_{2}\in C(\overline{B}_{1})$, there exists unique $w_{1},w_{2}$ and
\begin{eqnarray*}
& & \|w_{1}-w_{2}\|_{L^{\infty}(B_{1})}+\|\nabla w_{1}-\nabla w_{2}\|_{L^{2}(B_{1})}\\
&=&\|Tu_{1}-Tu_{2}\|_{L^{\infty}(B_{1})}+\|\nabla Tu_{1}-\nabla Tu_{2}\|_{L^{2}(B_{1})}\\
&\leq&C\left(\|V(u_{1}-u_{2})\|_{L^{1}(B_{1})}+\|V(u_{1}-u_{2})\|_{K(B_{1})}\right)\\
&\leq& C\|V\|_{K(B_{1})}\|\nabla u_{1}-\nabla u_{2}\|_{L^{2}(B_{1})}+C\delta\|u_{1}-u_{2}\|_{L^{\infty}(B_{1})}\\
&\leq& C^{'}\delta\left(\|\nabla u_{1}-\nabla u_{2}\|_{L^{2}(B_{1})}+\|u_{1}-u_{2}\|_{L^{\infty}(B_{1})}\right).
\end{eqnarray*}
This means $T$ is a contraction mapping on $W_{0}^{1,2}(B_{1})\cap C(\overline{B}_{1})$ since $\delta$ is small enough. By the fixed point theorem, there exists a unique fixed point $u\in W_{0}^{1,2}(B_{1})\cap C(\overline{B}_{1})$, which is a solution to the Dirichlet problem $(\ref{keq2})$. Furthermore by using the inequality $(\ref{uest})$ again, we have,
\begin{eqnarray*}
\|u\|_{L^{\infty}(B_{1})}+\|\nabla u\|_{L^{2}(B_{1})}&\leq&
C\left(\|\vec{f}\|_{L^{2}(B_{1})}+\|g\|_{L^{1}(B_{1})}+\|\vec{f}\|_{K^{1}(B_{1})}+\|g\|_{K(B_{1})}\right)\\
&&+
C\left(\|Vu\|_{L^{1}(B_{1})}+\|Vu\|_{K(B_{1})}\right)\\
&\leq&
C\left(\|\vec{f}\|_{L^{2}(B_{1})}+\|g\|_{L^{1}(B_{1})}+\|\vec{f}\|_{K^{1}(B_{1})}+\|g\|_{K(B_{1})}\right)\\
&&+
C\delta\left(\|u\|_{L^{\infty}(B_{1})}+\|\nabla u\|_{L^{2}(B_{1})}\right),
\end{eqnarray*}
it follows that
\begin{eqnarray*}\|u\|_{L^{\infty}(B_{1})}+\|\nabla u\|_{L^{2}(B_{1})}&\leq&
C\left(\|\vec{f}\|_{L^{2}(B_{1})}+\|g\|_{L^{1}(B_{1})}+\|\vec{f}\|_{K^{1}(B_{1})}+\|g\|_{K(B_{1})}\right)\\
&\leq&
C\left(\|\vec{f}\|_{L^{2}(B_{1})}+\|\vec{f}\|_{K^{1}(B_{1})}+\|g\|_{K(B_{1})}\right),
\end{eqnarray*}
where $C$ is a constant depending on $n$ and $\delta$.
\end{proof}
To continue, we need the following local maximum principle. This theorem has been already proved in \cite{AS1982,CFG1986} where Green function and inverse H\"{o}lder inequality were used. In our paper, we give an another proof by using the approaching method.
\begin{thm}\label{katoconti}
Assume $V\in K_{\eta_{V}}(B_{1})$ with
\begin{equation}\label{vdelta}
\|V\|_{K(B_{1})}\leq\delta
\end{equation}
for some $\delta$ sufficiently small. Then for any weak subsolution $u\in W^{1,2}(B_{1})$ of
\begin{equation}\label{hov}
-\Delta u+Vu=0,\quad\text{in}~~B_{1},
\end{equation}
$u$ is locally bounded with the estimate:
$$\|u\|_{L^{\infty}(B_{\frac{1}{2}})}\leq C\|u\|_{L^{2}(B_{1})},
$$
where $C$ is a constant depending only on $n$ and $\delta$.
\end{thm}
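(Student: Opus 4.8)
The plan is to derive the $L^{\infty}$ bound from a Green's function representation in which the zeroth order term $Vu$ is harmless precisely because $\|V\|_{K(B_{1})}$ is small; the key point, by contrast with a Moser iteration, is that the Kato norm enters \emph{linearly}, so the smallness threshold $\delta$ stays an absolute constant. First I would reduce to a bounded object. Since $u$ is a weak subsolution, $-\Delta u\le -Vu\le |V|\,|u|$ in $\mathcal{D}'(B_{1})$, and $|V|\,|u|\in L^{1}_{\mathrm{loc}}(B_{1})$ because $\|V\|_{L^{1}(B_{1})}\le C_{n}\|V\|_{K(B_{1})}$ while $\int_{K}|V|u^{2}<\infty$ by Lemma~\ref{lm5.4}. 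By the standard Kato inequality, for each $M>0$ the truncation $v:=\min(u_{+},M)$ satisfies $-\Delta v\le |V|v$ weakly in $B_{1}$ and $0\le v\le M$; note that on $\operatorname{supp}v$ one has $u>0$, so the sign of $u$ plays no further role. To make the representation below rigorous one mollifies $V$ (the mollified potentials keep the same Kato bound by the mollification lemmas following Corollary~\ref{cor5.5}), runs the argument for the resulting smooth subsolutions, and passes to the limit using weak-$\ast$ compactness of a family of uniformly $L^{\infty}$-bounded functions; I will suppress this below.

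The heart of the argument is the local estimate: there is $C_{n}$ such that for every $B_{\rho}(x_{0})$ with $B_{2\rho}(x_{0})\subset B_{1}$,
\[
\|v\|_{L^{\infty}(B_{\rho/4}(x_{0}))}\le C_{n}\,\rho^{-n/2}\,\|u_{+}\|_{L^{2}(B_{1})}+C_{n}\,\|V\|_{K(B_{1})}\,\|v\|_{L^{\infty}(B_{2\rho}(x_{0}))}.
\]
To prove it, let $h$ be the harmonic function on $B_{\rho}(x_{0})$ with the same boundary trace as $v$, so $v-h\in W^{1,2}_{0}(B_{\rho}(x_{0}))$. Since $-\Delta(v-h)\le |V|v$ with $v-h\in W^{1,2}_{0}$, the maximum principle gives $v-h\le N_{B_{\rho}(x_{0})}(|V|v)$, the Green potential of $|V|v$, which by $|G_{B_{\rho}(x_{0})}(x,y)|\le C|x-y|^{2-n}$ is at most $C\|v\|_{L^{\infty}(B_{2\rho}(x_{0}))}\sup_{x}\int_{B_{\rho}(x_{0})}|x-y|^{2-n}|V(y)|\,dy\le C_{n}\|V\|_{K(B_{1})}\|v\|_{L^{\infty}(B_{2\rho}(x_{0}))}$ on $B_{\rho/4}(x_{0})$. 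For $h$ itself, the mean value property gives $\|h\|_{L^{\infty}(B_{\rho/4}(x_{0}))}\le C\rho^{-n/2}\|h\|_{L^{2}(B_{\rho}(x_{0}))}$, and $\|h\|_{L^{2}(B_{\rho}(x_{0}))}\le \|v\|_{L^{2}(B_{\rho}(x_{0}))}+\|v-h\|_{L^{2}(B_{\rho}(x_{0}))}\le \|u_{+}\|_{L^{2}(B_{1})}+C\rho\,\|\nabla v\|_{L^{2}(B_{\rho}(x_{0}))}$ by Poincar\'e's inequality and $\|\nabla(v-h)\|_{L^{2}}\le\|\nabla v\|_{L^{2}}$ (energy minimality of $h$). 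Finally a Caccioppoli inequality for $v$ — test $-\Delta v\le |V|v$ against $v\eta^{2}$, expand, and absorb $\int|V|(v\eta)^{2}\le C\|V\|_{K(B_{1})}\|\nabla(v\eta)\|_{L^{2}}^{2}\le C\delta\|\nabla(v\eta)\|_{L^{2}}^{2}$ via Corollary~\ref{cor5.5} and the smallness of $\delta$ — yields $\|\nabla v\|_{L^{2}(B_{\rho}(x_{0}))}\le C\rho^{-1}\|v\|_{L^{2}(B_{2\rho}(x_{0}))}\le C\rho^{-1}\|u_{+}\|_{L^{2}(B_{1})}$, whence $\|h\|_{L^{\infty}(B_{\rho/4}(x_{0}))}\le C_{n}\rho^{-n/2}\|u_{+}\|_{L^{2}(B_{1})}$. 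Adding the two contributions gives the displayed inequality.

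With this in hand, set $\phi(t):=\|v\|_{L^{\infty}(B_{t})}$ for $t\in[\tfrac12,\tfrac34]$; since $v\le M$, $\phi$ is bounded. For $\tfrac12\le t<s\le\tfrac34$, choose $\rho=(s-t)/4$ and cover $B_{t}$ by balls $B_{\rho/4}(x_{0})$, $x_{0}\in B_{t}$; each satisfies $B_{2\rho}(x_{0})\subset B_{s}\subset B_{1}$, so the local estimate and taking the supremum over $x_{0}$ give
\[
\phi(t)\le C_{n}(s-t)^{-n/2}\,\|u_{+}\|_{L^{2}(B_{1})}+C_{n}\delta\,\phi(s).
\]
Choosing $\delta$ so that $C_{n}\delta<1$ (this is the ``$\delta$ sufficiently small'' of the statement) and applying Lemma~\ref{pee} with $\theta=C_{n}\delta$, $\alpha=n/2$, $A=C_{n}\|u_{+}\|_{L^{2}(B_{1})}$, $B=0$ gives $\phi(\tfrac12)\le C(n,\delta)\|u_{+}\|_{L^{2}(B_{1})}$, uniformly in $M$. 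Letting $M\to\infty$ yields $\|u_{+}\|_{L^{\infty}(B_{1/2})}\le C(n,\delta)\|u_{+}\|_{L^{2}(B_{1})}\le C(n,\delta)\|u\|_{L^{2}(B_{1})}$, and applying the same to $-u$ (again a subsolution when $u$ solves the equation) gives the two-sided bound.

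The main obstacle is passing from the distributional inequality $-\Delta u\le |V|\,|u|$, with only $L^{1}$ right-hand side, to the pointwise representation $v-h\le N(|V|v)$ and to the Kato inequality for the truncation $v$; this is exactly where the mollification of $V$, the uniform-in-$\epsilon$ bounds, and weak-$\ast$ compactness of the resulting uniformly $L^{\infty}$-bounded family are used. A secondary but conceptually essential point is that no power-type iteration appears: the $V$-term is absorbed solely through $\|V\|_{K(B_{1})}\le\delta$ with a fixed constant, so the threshold $\delta=\delta(n)$ does not deteriorate.
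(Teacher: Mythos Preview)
Your argument is correct in outline and the technical caveats you flag (Kato's inequality for the truncation $v=\min(u_{+},M)$, and the justification of $v-h\le N_{B_{\rho}}(|V|v)$ when the right-hand side is only $L^{1}$) are real but standard; mollifying $V$ as you indicate handles them, since the mollification lemmas in the paper preserve the Kato bound uniformly.

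Your route, however, differs substantially from the paper's. The paper does not truncate or use a harmonic replacement. Instead, for smooth $V$ it observes that $u\varphi$ (with $\varphi$ a cutoff) solves the \emph{full} Dirichlet problem $-\Delta w+Vw=-2\,\mathrm{div}(u\nabla\varphi)+u\Delta\varphi$, $w=0$ on $\partial B_{1}$, and then invokes the existence-with-estimates result (Theorem~\ref{thm5.9}) to get $\|u\|_{L^{\infty}(B_{t})}\le C(s-t)^{-2}\|u\|_{L^{q}(B_{s})}$ for any $q>n$; a separate bootstrap (an interpolation $\|u\|_{L^{q_{0}}(B_{2/3})}\le C\|u\|_{L^{q}(B_{3/4})}$ for $q<q_{0}$, itself proved via Lemma~\ref{pee} applied to $\ln I(t)$) then reduces to $q=2$. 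The passage to rough $V$ is by solving an approximate Dirichlet problem with mollified potential and using weak-$\ast$ compactness, much as you suggest. What your approach buys is directness: you reach $L^{2}$ in one absorbing step, with the Kato norm entering linearly, and you never need the existence theorem or the $L^{q}$ ladder. What the paper's approach buys is that it avoids the distributional subtleties of the truncation and the pointwise Green's bound for subsolutions, at the price of leaning on Theorem~\ref{thm5.9} and an extra iteration; it also treats only solutions (the localization $u\varphi$ requires the equation, not just the inequality), which is all that is needed downstream for Theorem~\ref{thm1.2}.
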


\begin{proof}
We divide our proof into two steps. Firstly, we assume $V\in C^{\infty}(B_{1})$ and satisfies $(\ref{vdelta})$, and we claim that for any $0<q<\infty$,
\begin{equation}\label{3/4}
\|u\|_{L^{\infty}(B_{\frac{1}{2}})}\leq C\|u\|_{L^{q}(B_{\frac{3}{4}})},
\end{equation}
where $C$ is a constant depending only on $n,~q,~\delta$.

In this situation, any solution of $(\ref{hov})$ is locally smooth in $B_{1}$ by classical regularity theory. For any $\varphi\in C_{0}^{\infty}(B_{1})$, we can verify that $u\varphi$ is a solution of
\begin{equation*}
\left\{
\begin{array}{rcll}
-\Delta w+Vw&=&-2\text{div}(u\nabla\varphi)+u\Delta\varphi,\qquad&\text{in}~~ B_{1},\\
w&=&0,\qquad&\text{on}~~\partial B_{1}.
\end{array}
\right.
\end{equation*}
Then by Theorem $\ref{thm5.9}$,
\begin{eqnarray*}
\|u\varphi\|_{L^{\infty}(B_{1})}\leq C\left(\|2u\nabla\varphi\|_{L^{2}(B_{1})}+\|2u\nabla\varphi\|_{K^{1}(B_{1})}+\|u\Delta\varphi\|_{K(B_{1})}\right),
\end{eqnarray*}
where $C$ only depends on $n,~p,~\delta$.
Now for any $\displaystyle\frac{1}{2}\leq t<s\leq \frac{3}{4}$, we take $\varphi\in C_{0}^{\infty}(B_{1})$ with $0\leq\varphi\leq1$ in $B_{1}$, $\varphi=1$ in $B_{t}$, $\varphi=0$ in $B_{1}\backslash B_{s}$, $|\nabla\varphi|\leq\displaystyle\frac{c_{0}}{s-t}$, $|\nabla^{2}\varphi|\leq\displaystyle\frac{c_{0}}{(s-t)^{2}}$,
then we have
\begin{eqnarray*}
\|2u\nabla\varphi\|_{L^{2}(B_{1})}\leq \frac{C}{s-t}\|u\|_{L^{2}(B_{s})},
\end{eqnarray*}
\begin{eqnarray*}
\|2u\nabla\varphi\|_{K^{1}(B_{1})}=\sup\limits_{x\in B_{1}}\int_{B_{1}}\frac{|2u(y)\nabla\varphi(y)|}{|x-y|^{n-1}}dy
\leq\frac{C}{s-t}\|u\|_{L^{q}(B_{s})},
\end{eqnarray*}
\begin{eqnarray*}
\|u\Delta\varphi\|_{K(B_{1})}=\sup\limits_{x\in B_{1}}\int_{B_{1}}\frac{|u(y)\Delta\varphi(y)|}{|x-y|^{n-2}}dy
\leq\frac{C}{(s-t)^{2}}\|u\|_{L^{\frac{q}{2}}(B_{s})},
\end{eqnarray*}
for any $q>n$. Hence we obtain that for any $q>n$,
\begin{equation}\label{st}
\|u\|_{L^{\infty}(B_{t})}\leq \frac{C}{(s-t)^{2}}\|u\|_{L^{q}(B_{s})}.
\end{equation}
Especially, for some fixed $q_{0}>n$, we have
\begin{equation}\label{1/2}
\|u\|_{L^{\infty}(B_{\frac{1}{2}})}\leq C\|u\|_{L^{q_{0}}(B_{\frac{2}{3}})}.
\end{equation}
For any $q>q_{0}$, it is easy to get $(\ref{3/4})$ by H\"{o}lder inequality. In the following we will show that for any $0<q<q_{0}$,
\begin{equation}\label{q0q}
\|u\|_{L^{q_{0}}(B_{\frac{2}{3}})}\leq C\|u\|_{L^{q}(B_{\frac{3}{4}})}.
\end{equation}
We assume $\displaystyle\int_{B_{\frac{3}{4}}}|u|^{q}dx=1$. If $\|u\|_{L^{q_{0}}(B_{\frac{2}{3}})}\leq 1$, then $(\ref{q0q})$ holds naturally. If $\|u\|_{L^{q_{0}}(B_{\frac{2}{3}})}> 1$, we set $I(t)=\left(\displaystyle\int_{B_{t}}|u|^{q_{0}}dx\right)^{\frac{1}{q_{0}}}$ for any $\displaystyle\frac{1}{2}\leq t\leq \frac{3}{4}$, then $I(t)$ is a nondecreasing function. Moreover, from $(\ref{st})$ and the assumption $\displaystyle\int_{B_{\frac{3}{4}}}|u|^{q}dx=1$,  we have for any $\displaystyle\frac{1}{2}\leq t<s\leq \frac{3}{4}$,
\begin{eqnarray*}
I(t)=\left(\displaystyle\int_{B_{t}}|u|^{q_{0}-q+q}dx\right)^{\frac{1}{q_{0}}}
\leq\left(\sup_{B_{t}}|u|\right)^{\theta}\left(\displaystyle\int_{B_{t}}|u|^{q}dx\right)^{\frac{1}{q_{0}}}
\leq\frac{C}{(s-t)^{2\theta}}I(s)^{\theta},
\end{eqnarray*}
where $\theta=\displaystyle\frac{q_{0}-q}{q_{0}}<1$. It follows that
\begin{eqnarray*}
\ln I(t)&\leq& \ln C+2\theta\ln (s-t)^{-1}+\theta\ln I(s)\\
&\leq& C_{1}+\frac{2\theta}{s-t}+\theta\ln I(s).
\end{eqnarray*}
By Lemma $\ref{pee}$ and by using $\|u\|_{L^{q_{0}}(B_{\frac{2}{3}})}> 1$, we obtain that for any $\displaystyle\frac{2}{3}\leq t<s\leq \frac{3}{4}$,
\begin{eqnarray*}
\ln I(t)\leq C\left(\frac{2\theta}{s-t}+C_{1}\right),
\end{eqnarray*}
where $C$ depends only on $q$. It follows that $I(\frac{2}{3})\leq C$ under the assumption $\displaystyle\int_{B_{\frac{3}{4}}}|u|^{q}dx=1$. So in general, we have
\begin{eqnarray*}
\left(\displaystyle\int_{B_{\frac{2}{3}}}|u|^{q_{0}}dx\right)^{\frac{1}{q_{0}}}\leq C\left(\displaystyle\int_{B_{\frac{3}{4}}}|u|^{q}dx\right)^{\frac{1}{q}}.
\end{eqnarray*}
We finish the proof of $(\ref{q0q})$. Combining with $(\ref{1/2})$ and $(\ref{q0q})$, we prove the claim.

Secondly, we consider $V\in K_{\eta_{V}}(B_{1})$. We mollify $\tilde{V}$: the zero extension of $V$, to get $\tilde{V}_{\epsilon}\in C^{\infty}(\mathbb{R}^{n})$. By $(\ref{Veps})$ we have $\|\tilde{V}_{\epsilon}\|_{K(\mathbb{R}^{n})}\leq\delta$ for any $\epsilon>0$. By Lemma 2.1 of \cite{K1994}, there exists a unique solution $u_{\epsilon}\in C_{\text{loc}}^{\infty}(B_{1})$ of
\begin{equation*}
-\Delta v+\tilde{V}_{\epsilon}v=0,\quad\text{in}~~B_{1},\qquad v-u\in W_{0}^{1,2}(B_{1}),
\end{equation*}
such that
$$\|\nabla u_{\epsilon}-\nabla u\|_{L^{2}(B_{1})}+\|u_{\epsilon}-u\|_{L^{2}(B_{1})}\rightarrow 0,\quad \text{as}~~\epsilon\rightarrow0.
$$
It follows that there exists a subsequence of $\{u_{\epsilon}\}$, still denoted by $\{u_{\epsilon}\}$, is convergent to $u$ for almost $x\in B_{1}$.
Moreover, $(\ref{3/4})$ implies that for $\epsilon>0$ small,
\begin{eqnarray*}
\|u_{\epsilon}\|_{L^{\infty}(B_{\frac{1}{2}})}&\leq& C\|u_{\epsilon}\|_{L^{2}(B_{\frac{3}{4}})}\\
&\leq& C\|u_{\epsilon}\|_{L^{2}(B_{1})}\\
&\leq& C\|u_{\epsilon}-u\|_{L^{2}(B_{1})}+C\|u\|_{L^{2}(B_{1})}\\
&\leq& C+C\|u\|_{L^{2}(B_{1})},
\end{eqnarray*}
where $C$ only depends on $n,~\delta$, independently of $\epsilon$. Then we can find a subsequence of $\{u_{\epsilon}\}$, denoted by $\{\tilde{u}_{\epsilon}\}$, is weakly-$\ast$ convergent to $\tilde{u}$, i.e. for any $g\in L^{1}(B_{\frac{1}{2}})$,
$$\int_{B_{\frac{1}{2}}}\tilde{u}_{\epsilon}gdx\rightarrow\int_{B_{\frac{1}{2}}}\tilde{u}gdx,\qquad \text{as}~~\epsilon\rightarrow0.
$$
It follows that $\tilde{u}=u$ in $B_{\frac{1}{2}}$ and
\begin{eqnarray*}
\left|\int_{B_{\frac{1}{2}}}ugdx\right|&=& \lim_{\epsilon\rightarrow0}\left|\int_{B_{\frac{1}{2}}}\tilde{u}_{\epsilon}gdx\right|\\
&\leq&C\|u\|_{L^{2}(B_{1})}\|g\|_{L^{1}(B_{\frac{1}{2}})}\\
\end{eqnarray*}
This means that $u\in L^{\infty}(B_{\frac{1}{2}})$ and
$$\|u\|_{L^{\infty}(B_{\frac{1}{2}})}\leq C\|u\|_{L^{2}(B_{1})}.
$$
We finish the proof.
\end{proof}

Furthermore,  under the assumptions of Theorem $\ref{katoconti}$, if $u\in W^{1,2}(B_{1})$ is a weak solution of $(\ref{hov})$, $u\in L_{\text{loc}}^{\infty}(B_{1})$ can lead to the continuity of $u$. In fact, the local boundedness of $u$ guarantee that $Vu$ still belongs to Kato class. Then by the definition of Kato class and $u$ satisfies $\Delta u=Vu$, we can conclude that $u$ is locally continuous in $B_{1}$. This result has also already been showed in \cite{AS1982,CFG1986}. Next we finish the proof of Theorem $\ref{thm1.2}$.\\

\noindent{\bf Proof of Theorem $\ref{thm1.2}$:}\\

By Theorem $\ref{thm5.9}$, we set $u_{1}\in W_{0}^{1,2}(B_{1})\cap C(\overline{B_{1}})$ be a solution of
\begin{eqnarray*}
\left\{
\begin{array}{rcll}
-\Delta u+Vu&=&-\text{div}\vec{f}+g,\qquad&\text{in}~~ B_{1},\\
u&=&0,\qquad&\text{on}~~\partial B_{1},
\end{array}
\right.
\end{eqnarray*}
Then $u_{1}-u\in W^{1,2}(B_{1})$ satisfies
$$-\Delta(u_{1}-u)+V(u_{1}-u)=0,\quad \text{in}~~B_{1}.
$$
Hence Theorem $\ref{katoconti}$ implies that $u_{1}-u$ is locally continuous in $B_{1}$ and also has the following estimate,
\begin{eqnarray*}
\|u_{1}-u\|_{L^{\infty}(B_{\frac{1}{2}})}&\leq&C\|u_{1}-u\|_{L^{2}(B_{1})}\\
&\leq&C\left(\|u\|_{L^{2}(B_{1})}+\|\vec{f}\|_{L^{2}(B_{1})}+\|\vec{f}\|_{K^{1}(B_{1})}+\|g\|_{K(B_{1})}\right),
\end{eqnarray*}
where the inequality $(\ref{dves})$ is used and $C$ is a constant depending on $n$ and $\|V\|_{K(B_{1})}$. Then $u$ is locally continuous in $B_{1}$ and by triangle inequality, we have
\begin{eqnarray*}
\|u\|_{L^{\infty}(B_{\frac{1}{2}})}&\leq&\|u_{1}-u\|_{L^{\infty}(B_{\frac{1}{2}})}+\|u_{1}\|_{L^{\infty}(B_{\frac{1}{2}})}\\
&\leq&C\left(\|u\|_{L^{2}(B_{1})}+\|\vec{f}\|_{L^{2}(B_{1})}+\|\vec{f}\|_{K^{1}(B_{1})}+\|g\|_{K(B_{1})}\right).
\end{eqnarray*}

\section{Additional observations and remarks}
In the end of this paper, we give some additional observations and remarks to complete our paper. We will give an equivalent condition for Kato class and the proof of Remark $\ref{rem5.15}$. Finally we will give a sufficient condition for our Dini decay condition.
\begin{thm}\label{katodini}
The following two statements are equivalent:\\
(1)$V\in K_{\eta}(B_{1})$ with the modulus of continuity $\eta(r)$;\\
(2)$V$ is $C^{-2,\text{Dini}}$ at point $y$ in $L^{1}$ sense with Dini modulus of continuity $\omega_{y}(r)$ for any $y\in B_{1}$, i.e. there exists $r_{0}>0$, such that for any $0<r\leq r_{0}$ and any $y\in B_{1}$,
$$\int_{0}^{r}\frac{\omega_{y}(s)}{s}ds\leq C(n)\eta(r)\leq C(n)\eta(r_{0})<\infty,
$$
where
$$\omega_{y}(r)=\frac{r^{2}}{|B_{1}\cap B_{r}(y)|}\int_{B_{1}\cap B_{r}(y)}|V(x)|dx,\quad \text{for~any}~~0<r\leq r_{0}.
$$
\end{thm}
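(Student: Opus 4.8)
The plan is to prove both implications by comparing, at every center $y\in B_{1}$ and every scale $r$, the pointwise Kato quantity $\int_{B_{1}\cap B_{r}(y)}\frac{|V(z)|}{|z-y|^{n-2}}\,dz$ with the Dini integral $\int_{0}^{r}\frac{\omega_{y}(s)}{s}\,ds$. The only geometric input is the elementary fact that for every $y\in\overline{B}_{1}$ and every $0<s\le 2$ one has $c(n)\,s^{n}\le |B_{1}\cap B_{s}(y)|\le |B_{1}|\,s^{n}$: the lower bound follows because $B_{1}\cap B_{s}(y)$ contains the ball of radius $s/2$ obtained by pushing $y$ a distance $s/2$ towards the origin (and contains $B_{s/2}(0)$ when $|y|<s/2$). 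Consequently, writing $\phi_{y}(s):=\int_{B_{1}\cap B_{s}(y)}|V(z)|\,dz$, which is nondecreasing in $s$, the defining quantity $\omega_{y}(s)=\dfrac{s^{2}\,\phi_{y}(s)}{|B_{1}\cap B_{s}(y)|}$ satisfies $c(n)\,s^{2-n}\phi_{y}(s)\le\omega_{y}(s)\le C(n)\,s^{2-n}\phi_{y}(s)$. This two-sided bound, routing the non-monotone $\omega_{y}$ through the genuinely monotone $\phi_{y}$, is what makes both directions go through.

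For $(1)\Rightarrow(2)$, fix $y\in B_{1}$ and $0<r\le r_{0}$ for a fixed $r_{0}$ (say $r_{0}=1$). Using the comparability above and Tonelli's theorem,
$$\int_{0}^{r}\frac{\omega_{y}(s)}{s}\,ds\le C(n)\int_{0}^{r}s^{1-n}\phi_{y}(s)\,ds=C(n)\int_{B_{1}\cap B_{r}(y)}|V(z)|\Bigl(\int_{|z-y|}^{r}s^{1-n}\,ds\Bigr)dz\le\frac{C(n)}{n-2}\int_{B_{1}\cap B_{r}(y)}\frac{|V(z)|}{|z-y|^{n-2}}\,dz,$$
and the last integral is $\le\eta(r)$ by hypothesis; since $\eta$ is nondecreasing this also gives $\le\eta(r_{0})<\infty$. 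This is precisely statement (2).

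For the converse $(2)\Rightarrow(1)$, fix $y\in B_{1}$ and decompose $B_{1}\cap B_{r}(y)$ into dyadic annuli $A_{j}=\{2^{-j-1}r<|z-y|\le 2^{-j}r\}\cap B_{1}$. On $A_{j}$ one has $|z-y|^{2-n}\le(2^{-j-1}r)^{2-n}$, hence
$$\int_{B_{1}\cap B_{r}(y)}\frac{|V(z)|}{|z-y|^{n-2}}\,dz=\sum_{j\ge0}\int_{A_{j}}\frac{|V(z)|}{|z-y|^{n-2}}\,dz\le\sum_{j\ge0}(2^{-j-1}r)^{2-n}\phi_{y}(2^{-j}r)\le C(n)\sum_{j\ge0}\omega_{y}(2^{-j}r).$$
The key step is then to dominate this discrete sum by $\int_{0}^{2r}\frac{\omega_{y}(s)}{s}\,ds$. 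Using $\omega_{y}(s)\ge c(n)s^{2-n}\phi_{y}(s)$ and the monotonicity of $\phi_{y}$, for each $j\ge0$,
$$\int_{2^{-j-1}r}^{2^{-j}r}\frac{\omega_{y}(s)}{s}\,ds\ge c(n)\,\phi_{y}(2^{-j-1}r)\int_{2^{-j-1}r}^{2^{-j}r}s^{1-n}\,ds\ge c'(n)\,(2^{-j}r)^{2-n}\phi_{y}(2^{-j-1}r)\ge c''(n)\,\omega_{y}(2^{-j-1}r),$$
and the analogous estimate on $(r,2r)$ controls the top term $\omega_{y}(r)$; summing gives $\sum_{j\ge0}\omega_{y}(2^{-j}r)\le C(n)\int_{0}^{2r}\frac{\omega_{y}(s)}{s}\,ds$. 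Combining with the display above and hypothesis (2), for $r\le r_{0}/2$ we obtain $\int_{B_{1}\cap B_{r}(y)}\frac{|V(z)|}{|z-y|^{n-2}}\,dz\le C(n)\eta(2r)$, uniformly in $y$; for $r_{0}/2<r\le 2$ one splits off the contribution of $B_{1}\setminus B_{r_{0}/2}(y)$, which is at most $(r_{0}/2)^{2-n}\|V\|_{L^{1}(B_{1})}$, the $L^{1}$ norm being finite because $\int_{0}^{r_{0}}\frac{\omega_{y}(s)}{s}\,ds<\infty$ forces $\phi_{y}(r_{0})<\infty$ and a finite cover of $B_{1}$ by balls $B_{r_{0}}(y_{i})$ bounds $\|V\|_{L^{1}(B_{1})}$. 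Thus $V\in K_{\tilde\eta}(B_{1})$ with $\tilde\eta(r)=C(n)\eta(2r)$ near $0$, which is assertion (1) up to the dimensional constant (already built into the statement of (2)).

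The main obstacle is exactly the passage in $(2)\Rightarrow(1)$ from the dyadic sum $\sum_{j}\omega_{y}(2^{-j}r)$ back to the continuous Dini integral: because $\omega_{y}$ carries no a priori monotonicity, consecutive scales cannot be compared directly and one must detour through the monotone quantity $\phi_{y}$ via the geometric two-sided bound $\omega_{y}(s)\approx s^{2-n}\phi_{y}(s)$. Once that comparison is in hand, treating the ranges of $r$ for which $2r$ or $r$ exceeds $r_{0}$, and checking $V\in L^{1}(B_{1})$, is routine bookkeeping.
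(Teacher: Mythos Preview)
Your proof is correct and follows essentially the same route as the paper: Tonelli for $(1)\Rightarrow(2)$ and a dyadic annular decomposition for $(2)\Rightarrow(1)$. The one point where you go further is in justifying $\sum_{j\ge0}\omega_{y}(2^{-j}r)\lesssim\int_{0}^{2r}\frac{\omega_{y}(s)}{s}\,ds$: the paper asserts the analogous bound (with upper limit $r$) in one line, implicitly treating $\omega_{y}$ as a modulus of continuity with the almost-decreasing property $\omega(r)/r\le 2\omega(h)/h$, whereas you correctly observe that the explicitly defined $\omega_{y}$ need not be monotone and instead route the comparison through the genuinely monotone $\phi_{y}(s)=\int_{B_{1}\cap B_{s}(y)}|V|$, at the cost of going up to $2r$ and landing on $\eta(2r)$ rather than $\eta(r)$; your additional remarks on the range $r>r_{0}/2$ and on $V\in L^{1}(B_{1})$ are likewise tidy bookkeeping that the paper leaves implicit.
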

\begin{proof}
Without loss of generality, we assume $V=0$ outside $B_{1}$.

$(1)\Rightarrow(2):$ Since $V\in K_{\eta}(B_{1})$, then for some $r_{0}$ fixed,
$$\sup_{y\in B_{1}}\int_{B_{r_{0}}(y)\cap B_{1}}\frac{|V(x)|}{|x-y|^{n-2}}dx\leq \eta(r_{0})<\infty.
$$
For any $0<r\leq r_{0}$,  we calculate $\displaystyle\int_{0}^{r}\frac{\omega_{y}(s)}{s}ds$ straightly to obtain that
\begin{eqnarray*}
\int_{0}^{r}\frac{\omega_{y}(s)}{s}ds&\leq& C\int_{0}^{r}\frac{s}{|B_{s}(y)|}\left(\int_{B_{s}(y)}|V(x)|dx\right)ds\\
&=&C\int_{0}^{r}\frac{1}{s^{n-1}}\left(\int_{0}^{s}\int_{\partial B_{1}}|V(\rho\sigma+y)|\rho^{n-1}d\sigma d\rho\right)ds\\
&=&C\int_{0}^{r}\left(\int_{\partial B_{1}}|V(\rho\sigma+y)|\rho^{n-1}d\sigma\right)\left(\int_{\rho}^{r}\frac{1}{s^{n-1}}ds\right)d\rho\\
&=&C\int_{0}^{r}\int_{\partial B_{1}}\frac{1}{n-2}\left(\frac{1}{\rho^{n-2}}-\frac{1}{r^{n-2}}\right)|V(\rho\sigma+y)|\rho^{n-1}d\sigma d\rho\\
&=&C\left(\int_{B_{r}(y)}\frac{|V(x)|}{|x-y|^{n-2}}dx-\int_{B_{r}(y)}\frac{|V(x)|}{r^{n-2}}dx\right)\\
&\leq&C\int_{B_{r}(y)}\frac{|V(x)|}{|x-y|^{n-2}}dx\\
&\leq&C\eta(r).
\end{eqnarray*}
Hence we prove that $\omega_{y}(r)$ is a Dini modulus of continuity.

$(2)\Rightarrow(1):$ For any $y\in B_{1}$, any $0<r\leq r_{0}$, and Dini modulus of continuity
$$\omega_{y}(r)=\frac{r^{2}}{|B_{1}\cap B_{r}(y)|}\int_{B_{1}\cap B_{r}(y)}|V(x)|dx,
$$
it follows that
\begin{eqnarray*}
\int_{B_{1}\cap(B_{r}(y)\setminus B_{\frac{r}{2}}(y))}\frac{|V(x)|}{|x-y|^{n-2}}dx
&\leq& \frac{2^{n-2}r^{2}}{r^{n}}\int_{B_{1}\cap(B_{r}(y)\setminus B_{\frac{r}{2}}(y))}|V(x)|dx\\
&\leq& \frac{Cr^{2}}{|B_{1}\cap(B_{r}(y)\setminus B_{\frac{r}{2}}(y))|}\int_{B_{1}\cap(B_{r}(y)\setminus B_{\frac{r}{2}}(y))}|V(x)|dx\\
&\leq&\frac{Cr^{2}}{|B_{1}\cap B_{r}(y)|}\int_{B_{1}\cap B_{r}(y)}|V(x)|dx\\
&\leq& C\omega_{y}(r).
\end{eqnarray*}
Similarly,
\begin{eqnarray*}
\int_{B_{1}\cap(B_{\frac{r}{2}}(y)\setminus B_{\frac{r}{4}}(y))}\frac{|V(x)|}{|x-y|^{n-2}}dx\leq C\omega_{y}(\frac{r}{2}),
\end{eqnarray*}
and for any $l=0,1,2,\cdots$,
\begin{eqnarray*}
\int_{B_{1}\cap(B_{\frac{r}{2^{l}}}(y)\setminus B_{\frac{r}{2^{l+1}}}(y))}\frac{|V(x)|}{|x-y|^{n-2}}dx\leq C\omega_{y}(\frac{r}{2^{l}}).
\end{eqnarray*}
Summing the above inequalities over $l$ from 0 to $+\infty$, we have
\begin{eqnarray*}
\int_{B_{1}\cap B_{r}(y)}\frac{|V(x)|}{|x-y|^{n-2}}dx&\leq& C\sum_{l=0}^{\infty}\omega_{y}(\frac{r}{2^{l}})\\
&\leq&4C\int_{0}^{r}\frac{\omega_{y}(s)}{s}ds\leq C(n)\eta(r).
\end{eqnarray*}
Hence we prove that $V\in K_{C\eta}(B_{1})$.
\end{proof}

Next we prove Remark $\ref{rem5.15}$, we firstly recall the following theorem proved by Maz'ya and Verbitsky in \cite{MV02}.
\begin{thm}\label{mv}
(i) Let $d_{\partial \Omega}(x)=\operatorname{dist}(x, \partial \Omega)$, and let
$$
V=\operatorname{div} \vec{\Gamma}+d_{\partial \Omega}^{-1} \Gamma_0
$$
where $\vec{\Gamma}=(\Gamma_1, \ldots, \Gamma_n)$ and $\Gamma_i \in M(W^{1,2}_{0}(\Omega) \rightarrow L^2(\Omega))$ for $i=0,1, \ldots, n$. Suppose that the following Hardy inequality holds for any $u\in C_{0}^{\infty}(\Omega)$:
$$\int_{\Omega}\frac{|u(x)|^{2}}{d_{\partial\Omega}(x)^{2}}dx\leq C\int_{\Omega}|\nabla u|^{2}dx.
$$
Then $V \in M(W^{1,2}_{0}(\Omega) \rightarrow W^{-1,2}(\Omega))$ and
$$
\|V\|_{M(W^{1,2}_{0}(\Omega) \rightarrow W^{-1,2}(\Omega))} \leqslant C \sum_{0 \leq i \leq n}\|\Gamma_i\|_{M(W^{1,2}_{0}(\Omega) \rightarrow L^2(\Omega))} .
$$
(ii) Conversely, if $V \in M(W^{1,2}_{0}(\Omega) \rightarrow W^{-1,2}(\Omega))$, then there exist $\vec{\Gamma}=\left(\Gamma_1, \ldots, \Gamma_n\right)$ and $\Gamma_0$ such that $\Gamma_i \in M(W^{1,2}_{0}(\Omega) \rightarrow L^2(\Omega))$ for $i=0,1, \ldots, n$, and $V=\operatorname{div} \vec{\Gamma}+d_{\partial \Omega}^{-1} \Gamma_0$. Moreover,
$$
\sum_{0 \leq i \leq n}\|\Gamma_i\|_{M(W^{1,2}_{0}(\Omega) \rightarrow L^2(\Omega))} \leq C\|V\|_{M(W^{1,2}_{0}(\Omega) \rightarrow W^{-1,2}(\Omega))}.
$$
\end{thm}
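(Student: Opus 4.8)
The plan is to handle the two directions separately: (i) is a short integration by parts, whereas (ii) is the substantial assertion, for which I would follow Maz'ya and Verbitsky \cite{MV02} (in whose setting $V$ is a distribution, so $\langle Vu,v\rangle=\langle V,uv\rangle$). For (i), first reduce to $u,v\in C_0^\infty(\Omega)$: once the bound holds there it extends to $W_0^{1,2}(\Omega)\times W_0^{1,2}(\Omega)$, since $\Gamma_i u_k\to\Gamma_i u$ and $d_{\partial\Omega}^{-1}u_k\to d_{\partial\Omega}^{-1}u$ in $L^2(\Omega)$ whenever $u_k\to u$ in $W_0^{1,2}(\Omega)$. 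For $u,v\in C_0^\infty(\Omega)$ write $\langle Vu,v\rangle=-\int_\Omega\vec\Gamma\cdot\nabla(uv)\,dx+\int_\Omega d_{\partial\Omega}^{-1}\Gamma_0\,uv\,dx$ and expand $\nabla(uv)=u\nabla v+v\nabla u$. For the divergence part, Cauchy--Schwarz and $\int_\Omega|\Gamma_i|^2u^2\,dx\le\|\Gamma_i\|_{M(W_0^{1,2}(\Omega)\to L^2(\Omega))}^2\|\nabla u\|_{L^2(\Omega)}^2$ give $|\int_\Omega u\,\vec\Gamma\cdot\nabla v\,dx|\le\big(\sum_{i=1}^n\|\Gamma_i\|_M\big)\|\nabla u\|_{L^2}\|\nabla v\|_{L^2}$, and symmetrically for $\int_\Omega v\,\vec\Gamma\cdot\nabla u$; for the weighted part, $|\int_\Omega d_{\partial\Omega}^{-1}\Gamma_0\,uv\,dx|\le\|\Gamma_0 u\|_{L^2}\|d_{\partial\Omega}^{-1}v\|_{L^2}\le C^{1/2}\|\Gamma_0\|_M\|\nabla u\|_{L^2}\|\nabla v\|_{L^2}$ with $C$ the Hardy constant. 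Adding the three bounds gives the stated estimate, with constant $\le C\sum_{i=0}^n\|\Gamma_i\|_M$.

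For (ii), the idea is to realize $V$ as minus the divergence of the gradient of its own Green potential. Morally: solve $-\Delta w=V$ with zero Dirichlet data, so $V=\operatorname{div}(-\nabla w)$, and then show that $|\nabla w|^2\,dx$ splits as an admissible measure for $W_0^{1,2}(\Omega)$ plus a piece dominated by $C\,d_{\partial\Omega}^{-2}\,dx$; the admissible part supplies $\vec\Gamma$ and the weighted part, multiplied by $d_{\partial\Omega}$, supplies $\Gamma_0$, with all norms controlled by $\|V\|_{M(W_0^{1,2}(\Omega)\to W^{-1,2}(\Omega))}$. The mechanism is the estimate obtained by testing the equation against $\phi=wu^2$ for $u\in C_0^\infty(\Omega)$: writing $\langle V,wu^2\rangle=\langle V(wu),u\rangle$, the form boundedness of $V$ and Cauchy--Schwarz absorption yield $\int_\Omega|\nabla w|^2u^2\,dx\le C\big(\int_\Omega w^2|\nabla u|^2\,dx+\|\nabla u\|_{L^2(\Omega)}^2\big)$. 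Specializing $u$ to capacitary potentials of compact sets $e$ and invoking Maz'ya's capacitary criterion for admissible measures turns this into $\int_e|\nabla w|^2\,dx\le C\operatorname{cap}_\Omega(e)$ when $e$ stays at positive distance from $\partial\Omega$; near $\partial\Omega$ a Whitney decomposition and the Hardy inequality bound the surplus by $\int_e d_{\partial\Omega}^{-2}\,dx$, and that excess mass is absorbed into the $\Gamma_0$ term.

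The main obstacle, and the reason (ii) is the content of \cite{MV02} rather than a short computation, is twofold. First, a form-bounded $V$ need not lie in $W^{-1,2}(\Omega)$, so the Green potential $w$ must be constructed with care --- on a Whitney exhaustion, or through the Hardy-regularized operator $-\Delta+d_{\partial\Omega}^{-2}$ (whose quadratic form is equivalent to $\|\nabla\cdot\|_{L^2(\Omega)}^2$), followed by a limiting argument. Second, the capacitary strong-type estimate for $|\nabla w|^2$ and the boundary localization producing $\Gamma_0$ rest on Maz'ya-type capacity and Muckenhoupt-type estimates; I would import these from \cite{MV02} rather than reprove them. Direction (i), by contrast, is self-contained given the admissible-measure (trace) inequalities for the $\Gamma_i$ and the Hardy inequality assumed in the statement.
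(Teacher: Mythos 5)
The paper does not prove Theorem \ref{mv} at all; it imports it verbatim from Maz'ya--Verbitsky \cite{MV02} (the sentence preceding the theorem says as much), so there is no in-paper proof to compare against. Judged on its own merits, your treatment of direction (i) is correct and complete: writing $\langle Vu,v\rangle=-\int_\Omega\vec\Gamma\cdot\nabla(uv)\,dx+\int_\Omega d_{\partial\Omega}^{-1}\Gamma_0\,uv\,dx$, expanding $\nabla(uv)$, applying Cauchy--Schwarz together with the trace inequality $\int|\Gamma_i|^2|\varphi|^2\le\|\Gamma_i\|_M^2\|\nabla\varphi\|_{L^2}^2$ on the divergence terms, and invoking the assumed Hardy inequality on the $d_{\partial\Omega}^{-1}\Gamma_0$ term gives exactly the stated bound; the density argument from $C_0^\infty$ to $W_0^{1,2}$ is also handled correctly via the same trace and Hardy inequalities.

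For direction (ii), your outline is a faithful account of the Maz'ya--Verbitsky mechanism: realize $V=\operatorname{div}(-\nabla w)$ with $w$ the Green potential (constructed through a Whitney exhaustion or the Hardy-regularized operator $-\Delta+d_{\partial\Omega}^{-2}$ to sidestep the fact that a form-bounded $V$ need not lie in $W^{-1,2}(\Omega)$), test against $\phi=wu^2$ to get $\int|\nabla w|^2u^2\le C(\int w^2|\nabla u|^2+\|\nabla u\|_{L^2}^2)$, upgrade this to the capacitary strong-type bound $\int_e|\nabla w|^2\le C\operatorname{cap}_\Omega(e)$ away from the boundary, and absorb the near-boundary surplus into the $d_{\partial\Omega}^{-1}\Gamma_0$ term via a Whitney decomposition plus Hardy. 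You explicitly flag that the capacitary strong-type inequality and the boundary localization would be imported from \cite{MV02} rather than reproved. Since the paper itself does nothing more than cite \cite{MV02} for this entire theorem, that is an appropriate level of detail, and in fact your write-up of (i) exceeds what the paper offers.
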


\noindent{\bf Proof of Remark $\ref{rem5.15}$:}\\

$(1)\Rightarrow(2)$: In fact, we take $\psi\in W_{0}^{1,2}(B_{r}(y))$ in $(\ref{eqv1})$, it follows that
$$
|\langle V\psi,\varphi\rangle|\leq C\omega(r)\|\nabla\psi\|_{L^{2}(B_{r}(y))}\|\nabla\varphi\|_{L^{2}(B_{r}(y))}.
$$
This implies that $V\in M(W^{1,2}_{0}(B_{r}(y)) \rightarrow W^{-1,2}(B_{r}(y)))$. Then by Theorem $\ref{mv}$ (ii), there exists $\vec{\Gamma}_{r,y},\Gamma_{r,y,0}$ such that $\Gamma_{r,y,i}\in M(W^{1,2}_{0}(B_{r}(y)) \rightarrow L^2(B_{r}(y)))$ for $0\leq i\leq n$, and
$$V=\text{div}\vec{\Gamma}_{r,y}+d^{-1}_{\partial B_{r}(y)}(x)\Gamma_{r,y,0}$$ in $B_{r}(y)$. Furthermore,
$$\sum_{0 \leq i \leq n}\|\Gamma_{r,y,i}\|_{M(W^{1,2}_{0}(B_{r}(y)) \rightarrow L^2(B_{r}(y)))} \leq C\omega(r).
$$

$(2)\Rightarrow(1)$: We assume that
$$\int_{B_{s}(y)}|\Gamma_{s,y,i}|^{2}\varphi^{2}dx\leq C\omega(s)^{2}\|\nabla\varphi\|_{L^{2}(B_{s}(y))}^{2},
$$ for any $\varphi\in W_{0}^{1,2}(B_{s}(y))$ and any $r<s\leq2r$.
Now we take cut-off function $\xi\in C^{\infty}_{0}(B_{s}(y))$ with $\xi\equiv1$ in $B_{r}(y)$, $0\leq\xi\leq1$ and $|\nabla\xi|\leq\displaystyle\frac{C}{s-r}$. Then for any $\psi\in W^{1,2}(B_{s}(y))$, if we take $\varphi=\xi\psi\in W_{0}^{1,2}(B_{s}(y))$ in the above inequality, it follows that
\begin{eqnarray*}
\int_{B_{r}(y)}|\Gamma_{s,y,i}|^{2}\psi^{2}dx&=&\int_{B_{r}(y)}|\Gamma_{s,y,i}|^{2}\xi^{2}\psi^{2}dx\\
&\leq&\int_{B_{s}(y)}|\Gamma_{s,y,i}|^{2}(\xi\psi)^{2}dx\\
&\leq& C\omega(s)^{2}\|\nabla(\xi\psi)\|_{L^{2}(B_{s}(y))}^{2}\\
&\leq& C\omega(s)^{2}\left(\frac{\|\psi\|_{L^{2}(B_{s}(y))}^{2}}{(s-r)^{2}}+\|\nabla\psi\|_{L^{2}(B_{s}(y))}^{2}\right)\\
&\leq& C\omega(s)^{2}\|\psi(\cdot+y)\|_{L_{r,s}^{1,2}}^{2}.
\end{eqnarray*}
Hence for any $0<r\leq\displaystyle\frac{1}{2}$, for any $y\in B_{1}$ satisfying $B_{2r}(y)\subset B_{1}$, $\psi\in W^{1,2}(B_{1})$,  and $\varphi\in W_{0}^{1,2}(B_{1})$ with $\text{supp}\{\varphi\}\subset \overline{B_{r}(y)}$, we have  by H\"{o}lder inequality and Hardy inequality,
\begin{eqnarray*}
|\langle V\psi,\varphi\rangle|&\leq& \left|\int_{B_{r}(y)}\vec{\Gamma}_{s,y}\cdot\nabla(\psi\varphi)dx\right|+\left|\int_{B_{r}(y)}d^{-1}_{\partial B_{s}(y)}(x)\Gamma_{s,y,0}\psi\varphi dx\right|\\
&\leq&\left(\int_{B_{r}(y)}|\vec{\Gamma}_{s,y}|^{2}|\psi|^{2}dx\right)^{\frac{1}{2}}\left(\int_{B_{r}(y)}|\nabla\varphi|^{2}\right)^{\frac{1}{2}}
+\left(\int_{B_{r}(y)}|\vec{\Gamma}_{s,y}|^{2}|\varphi|^{2}dx\right)^{\frac{1}{2}}\left(\int_{B_{r}(y)}|\nabla\psi|^{2}\right)^{\frac{1}{2}}\\
&&+\left(\int_{B_{r}(y)}|\Gamma_{s,y,0}|^{2}|\psi|^{2}dx\right)^{\frac{1}{2}}\left(\int_{B_{r}(y)}\frac{\varphi^{2}}{d^{2}_{\partial B_{s}(y)}(x)}dx\right)^{\frac{1}{2}}\\
&\leq&C\omega(s)\|\psi(\cdot+y)\|_{L_{r,s}^{1,2}}\|\nabla\varphi\|_{L^{2}(B_{r}(y))}.
\end{eqnarray*}

We finish the proof.\\

In this paper, it shows that Dini decay condition $(\ref{eqv1})$ leads to the continuity of the solution. Maybe when $V\in M(W^{1,2}(B_{1}),W^{-1,2}(B_{1}))$, $(\ref{eqv1})$ is not easily to verified. while Remark $\ref{rem5.15}$ gives an equivalent condition  $(\ref{eqv2})$ which can be more conveniently  checked. Next, we give a sufficient condition of $(\ref{eqv1})$.
\begin{thm}
Assume $V=\emph{div}\vec{\Gamma}$ in $B_{1}$
where $\vec{\Gamma}=(\Gamma_1, \ldots, \Gamma_n)$. There exists $0<r_{0}\ll1$, for any $0 < r\leq r_{0}$, $\Gamma_{i}(1\leq i\leq n)$ satisfies
\begin{equation}\label{eqv3}
\sup_{y\in B_{1}}\left(\frac{1}{|B_{1}\cap B_{r}(y)|}\int_{B_{1}\cap B_{r}(y)}|\Gamma_{i}|^{2}dx\right)^{\frac{1}{2}}\leq C\frac{\omega_{1}(r)}{r},
\end{equation}
where $\omega_{1}(r)$ is a modulus of continuity satisfying
\begin{equation}\label{omega1r}
\displaystyle\int_{0}^{r_{0}}\frac{\omega_{1}(r)^{2}}{r}dr<\infty, \quad \displaystyle\int_{0}^{r_{0}}\frac{1}{r}\sqrt{\displaystyle\int_{0}^{r}\frac{\omega_{1}(s)^{2}}{s}ds}dr<\infty.
\end{equation}
Then $V$ satisfies $(\ref{eqv1})$ with $\omega(r)=\sqrt{\displaystyle\int_{0}^{r}\displaystyle\frac{\omega_{1}(s)^{2}}{s}ds}$.
\end{thm}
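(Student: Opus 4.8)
The plan is to verify the Dini decay condition $(\ref{eqv1})$ directly. Fix $0<r\le r_0$, $\psi\in W^{1,2}(B_1)$, and $\varphi\in W_0^{1,2}(B_1)$ with $\operatorname{supp}\{\varphi\}\subset\overline{B_r(y)}$. Since $V=\operatorname{div}\vec\Gamma$, integration by parts gives $\langle V\psi,\varphi\rangle=-\int_{B_r(y)}\vec\Gamma\cdot\nabla(\psi\varphi)\,dx$, so
\begin{eqnarray*}
|\langle V\psi,\varphi\rangle|\le \left(\int_{B_r(y)}|\vec\Gamma|^2|\psi|^2\,dx\right)^{\frac12}\!\!\|\nabla\varphi\|_{L^2(B_r(y))}
+\left(\int_{B_r(y)}|\vec\Gamma|^2|\varphi|^2\,dx\right)^{\frac12}\!\!\|\nabla\psi\|_{L^2(B_r(y))}.
\end{eqnarray*}
Thus everything reduces to showing that the measure $|\vec\Gamma|^2\,dx$ obeys a trace inequality on $B_r(y)$ with constant $\omega(r)^2$, i.e. $\int_{B_r(y)}|\vec\Gamma|^2\phi^2\,dx\le C\,\omega(r)^2\|\phi\|_{L_{r,s}^{1,2}}^2$ (for $\phi=\psi$) and $\le C\,\omega(r)^2\|\nabla\phi\|_{L^2}^2$ (for $\phi=\varphi$, which vanishes on $\partial B_r(y)$). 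The second is a special case of the first via Poincaré, so the crux is the trace estimate for $|\vec\Gamma|^2$.

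To establish the trace inequality I would use a dyadic annular decomposition centered at $y$, exactly as in the proof of Theorem \ref{katodini}: write $B_r(y)=\bigcup_{l\ge0}A_l$ with $A_l=B_{r/2^l}(y)\setminus B_{r/2^{l+1}}(y)$. The hypothesis $(\ref{eqv3})$ controls $\fint_{A_l}|\vec\Gamma|^2\le\fint_{B_{r/2^l}(y)\cap B_1}|\vec\Gamma|^2\le C\,\omega_1(r/2^l)^2\,2^{2l}/r^2$. On each annulus one splits $\phi$ into its average $\bar\phi_{A_l}$ plus oscillation: the oscillation part is handled by Poincaré on $A_l$ (gaining a factor $(r/2^l)^2$ that cancels the $2^{2l}/r^2$), contributing $\sum_l C\,\omega_1(r/2^l)^2\|\nabla\phi\|_{L^2(A_l)}^2\le C(\sum_l\omega_1(r/2^l)^2)\|\nabla\phi\|^2\le C\,\omega(r)^2\|\nabla\phi\|^2$, using $\sum_{l\ge0}\omega_1(r/2^l)^2\le 4\int_0^r\frac{\omega_1(s)^2}{s}\,ds=4\,\omega(r)^2$ (comparison of a series with an integral of a monotone-ish quantity, as in Theorem \ref{katodini}). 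The averages require a telescoping/Hardy argument: $|\bar\phi_{A_l}|\lesssim |\bar\phi_{A_0}|+\sum_{j<l}|\bar\phi_{A_{j+1}}-\bar\phi_{A_j}|$ and the successive differences are bounded by $(r/2^j)\,\fint_{B_{r/2^j}(y)}|\nabla\phi|$; summing $\sum_l\omega_1(r/2^l)^2 2^{2l}r^{-2}|A_l|\,|\bar\phi_{A_l}|^2$ and using the second condition in $(\ref{omega1r})$ — namely $\int_0^{r_0}\frac1r\sqrt{\int_0^r\frac{\omega_1(s)^2}{s}ds}\,dr<\infty$, which is precisely what makes the weighted series $\sum_l \omega(r/2^l)$ type quantities converge — yields the bound $C\,\omega(r)^2\big(\|\nabla\phi\|_{L^2(B_r(y))}^2+r^{-2}\|\phi\|_{L^2(B_r(y))}^2\big)=C\,\omega(r)^2\|\phi(\cdot+y)\|_{L_{r,s}^{1,2}}^2$ for any $r<s\le2r$. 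Note $\omega(r)=\sqrt{\int_0^r\omega_1(s)^2 s^{-1}ds}$ is genuinely a modulus of continuity (nondecreasing, $\to0$ as $r\to0$) and the first condition in $(\ref{omega1r})$ ensures $\int_0^{r_0}\frac{\omega(r)}{r}dr<\infty$ after an integration by parts, so $(\ref{eqv1})$ with this $\omega$ feeds into Theorem \ref{linfty}.

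The main obstacle is the bookkeeping in the averages term: one must carefully track how the telescoped gradient averages, weighted by the rapidly growing $\omega_1(r/2^l)^2 2^{2l}$, reassemble into a controllable quantity, and this is exactly where both integral conditions in $(\ref{omega1r})$ get used (the first for the oscillation part and the equivalent series $\sum\omega_1(r/2^l)^2$, the second for the averages via a double-sum rearrangement $\sum_l\sum_{j\le l}$). A cleaner alternative that avoids the double sum is to first prove the pointwise bound $\big(\fint_{B_\rho(y)}|\vec\Gamma|^2\big)^{1/2}\le C\,\omega_1(\rho)/\rho$ and then invoke the characterization of trace inequalities for radially-controlled densities, but given the self-contained style of the paper I expect the dyadic argument above — modeled on Theorem \ref{katodini} and on the Hardy-inequality computations already used in Example \ref{ex1.7} and in the proof of Remark \ref{rem5.15} — to be the intended route.
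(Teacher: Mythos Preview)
Your starting point---expanding $\langle V\psi,\varphi\rangle=-\int\vec\Gamma\cdot\nabla(\psi\varphi)$ and reducing to a trace inequality for $|\vec\Gamma|^2$---matches the paper, but the paper does \emph{not} carry out the dyadic-by-hand argument you propose. Instead it observes that hypothesis $(\ref{eqv3})$ says precisely that $|\Gamma_i|^2$ is $C^{-2,\text{Dini}}$ at every $y\in B_1$ in the $L^1$ sense with modulus $C\,\omega_1(r)^2$; the first condition in $(\ref{omega1r})$ then feeds Theorem~\ref{katodini} to conclude $|\Gamma_i|^2\in K_{\tilde\omega_1}(B_1)$ with $\tilde\omega_1(r)=C\int_0^r\omega_1(s)^2 s^{-1}\,ds=C\,\omega(r)^2$. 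Once in Kato class, Lemma~\ref{lm5.4} \emph{is} the trace inequality: it gives $\int_{B_r(x_0)}|\Gamma_i|^2u^2\le C\,\tilde\omega_1(s)\,\|u(\cdot+x_0)\|^2_{L^{1,2}_{r,s}}$ directly, with no further work. The second condition in $(\ref{omega1r})$ is used only to record that $\omega(r)=\sqrt{\tilde\omega_1(r)}$ is a Dini modulus, not in the trace estimate itself---so you have misattributed its role.

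In other words, the ``cleaner alternative'' you mention at the end---invoke a known trace inequality for densities with controlled averages---is exactly what the paper does, routed through the Kato machinery already assembled in Sections~2 and~6. Your dyadic oscillation/average splitting is a plausible strategy for proving such a trace inequality from scratch, but the averages term is genuinely delicate: the double sum $\sum_l\sum_{j\le l}$ carrying weights $\omega_1(r/2^l)^2\,2^{2l}$ has to collapse to something bounded by $\omega(r)^2$, and your sketch does not make clear why $(\ref{omega1r})$ alone suffices for this. The paper sidesteps the issue entirely by citing Lemma~\ref{lm5.4}.
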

\begin{proof}
By Theorem $\ref{katodini}$ and $(\ref{omega1r})$, we can know that $|\Gamma_{i}|^{2}(1\leq i\leq n)$ belongs to $K_{\tilde{\omega}_{1}}(B_{1})$ where $\tilde{\omega}_{1}(r)=C\displaystyle\int_{0}^{r}\frac{\omega_{1}(s)^{2}}{s}ds$ and $\sqrt{\tilde{\omega}_{1}(r)}$ is a Dini modulus of continuity. By Lemma $\ref{lm5.4}$, it follows that for any $u\in W_{\text{loc}}^{1,2}(B_{1})$, for any $0<r<s\leq 2r\leq \displaystyle r_{0}$ and $x_{0}\in B_{1}$ with $\overline{B_{2r}(x_{0})}\subseteq B_{1}$,
\begin{eqnarray*}
\int_{B_{r}(x_{0})}|\Gamma_{i}(x)|^{2}u^{2}(x)dx&\leq& C\left(\sup_{x\in B_{s}(x_{0})}\int_{B_{s}(x)}\frac{|\Gamma_{i}(y)|^{2}}{|x-y|^{n-2}}dy\right)
\left(\frac{4}{(s-r)^{2}}\|u\|_{L^{2}(B_{s}(x_{0}))}^{2}+\|Du\|_{L^{2}(B_{s}(x_{0}))}^{2}\right)\\
&\leq&C\tilde{\omega}_{1}(s)\left(\frac{1}{(s-r)^{2}}\|u\|_{L^{2}(B_{s}(x_{0}))}^{2}+\|Du\|_{L^{2}(B_{s}(x_{0}))}^{2}\right)\\
&\leq&C\tilde{\omega}_{1}(s)\|u(\cdot+x_{0})\|^{2}_{L_{r,s}^{1,2}}.
\end{eqnarray*}
Then for any $\psi\in W^{1,2}(B_{1})$, $\varphi\in W_{0}^{1,2}(B_{1})$ with $\text{supp}\{\varphi\}\subset \overline{B_{r}(x_{0})}$,
\begin{eqnarray*}
|\langle V\psi,\varphi\rangle|&=&\left|\int_{B_{r}(x_{0})}\vec{\Gamma}\cdot\nabla(\psi\varphi)dx\right|\\
&\leq&C\sqrt{\tilde{\omega}_{1}(s)}\|\psi(\cdot+x_{0})\|_{L_{r,s}^{1,2}}\|\nabla\varphi\|_{L^{2}(B_{r}(x_{0}))},
\end{eqnarray*}
for any $r<s\leq2r$. Combining with $(\ref{omega1r})$, this means that $(\ref{eqv1})$ holds for $V$ with $\omega(r)=\sqrt{\displaystyle\int_{0}^{r}\displaystyle\frac{\omega_{1}(s)^{2}}{s}ds}$.

\end{proof}
\noindent{\bf Data availability} No data was used for the research described in the article.

\end{document}